\numberwithin{equation}{section}
\newtheorem{theo}{Th\'eor\`eme}[]
\theoremstyle{plain}
\newtheorem{theorem}{Th\'eor\`eme}[section]
\newtheorem{corollary}[theorem]{Corollaire}
\newtheorem{lemma}[theorem]{Lemme}
\newtheorem{proposition}[theorem]{Proposition}
\theoremstyle{definition}
\newtheorem{defi}[theorem]{D\'efinition}
\newtheorem{remark}[theorem]{Remarque}
\DeclareMathOperator{\supp}{supp}
\DeclareMathOperator{\Irr}{Irr}
\DeclareMathOperator{\Hom}{Hom}
\DeclareMathOperator{\rang}{rang}
\DeclareMathOperator{\Jac}{Jac}
\DeclareMathOperator{\ind}{ind}
\DeclareMathOperator{\ry}{Nrd}
\DeclareMathOperator{\trd}{trd}
\author[Alberto M\'inguez]{Alberto M\'inguez}\thanks{Partially supported by MTM2004-07203-C02-01 and FEDER}
\address{Alberto M\'inguez, Laboratoire de Math\'ematiques, Universit\'e Paris-Sud, B\^at 425
91405 Orsay Cedex, France, CNRS UMR 8628. \\ URL: {\rm  http://www.math.u-psud.fr/$\sim$minguez/}} 
\email{minguez@clipper.ens.fr}
\begin{document}

\title[Correspondance de Howe: paires duales de type II]{Correspondance de Howe explicite: paires duales de type II\\ \vspace{1cm} \textnormal{Explicit Howe correspondence: dual pairs of  type II}}

\begin{otherlanguage}{english}
\begin{abstract}
In this article, we give a new method for proving Howe correspondence in the case of dual pairs of type $\left( {\rm GL}_n, {\rm GL}_m \right)$ over a non-Archimedean locally compact field $F$. The proof consists in combining a study on Kudla's filtration \cite{kudla} with the results of \cite{Min1} about the irreducibility of a parabolically induced representation. The proof is valid for $F$ of any characteristic and allows us to make the correspondence explicit in terms of Langlands parameters. Hence it generalizes the results of  \cite{wat} and answers completely all questions studied in \cite{Mu1} and \cite{Mu2} for dual pairs of type II.
\end{abstract}
\end{otherlanguage}
\maketitle
\selectlanguage{francais}
\begin{abstract}
Dans cet article, nous proposons une nouvelle m\'ethode pour d\'emontrer la bijectivit\'e de la correspondance de Howe pour les paires duales du type $\left( {\rm GL}_n, {\rm GL}_m \right)$ sur un corps $F$ localement compact non archim\'edien. La preuve est bas\'ee sur une \'etude soigneuse  de la filtration de Kudla \cite{kudla} ainsi que sur les r\'esultats de \cite{Min1} \`a propos de l'irr\'eductibilit\'e d'une repr\'esentation induite parabolique. Elle est valable pour $F$ de caract\'eristique quelconque et nous permet d'expliciter la bijection en termes des param\`etres de Langlands. Elle  g\'en\'eralise donc les r\'esultats de \cite{wat} et r\'epond totalement aux questions \'etudi\'ees dans \cite{Mu1} et \cite{Mu2} pour les paires duales de type II. 
\end{abstract}
{\bf Codes MSN:} 11F27, 22E50.
\vspace{1cm}
\section*{Introduction}
Soit $F$ un corps commutatif localement compact non archim\'{e}dien de caract\'{e}ristique
r\'{e}siduelle $p>0$. Soit $\psi: F \rightarrow \mathbb{C}^\times$ un caract\`ere additif non trivial de $F$. Si $W$ est un espace vectoriel symplectique sur $F$, de dimension 
finie, on dispose du groupe m\'etaplectique $\widetilde{Sp}\left(W\right)$, qui est  un rev\^etement \`a deux feuillets du groupe symplectique $Sp\left(W\right)$, et d'une repr\'esentation $\left(\omega, S\right)$ de $\widetilde{Sp}\left(W\right)$ canoniquement attach\'ee \`a $\psi$, dite repr\'esentation de Weil ou m\'etaplectique, sur un espace de fonctions $S$ \`a valeurs complexes. Soit $\left(G_1,G_2\right)$ une paire duale r\'eductive (\textit{cf.} \cite[1.I.17]{MVW}) dans $Sp\left(W\right)$: ou bien $\left(G_1,G_2\right)$ est une paire de groupes classiques -symplectique, orthogonal, unitaire- (paires duales de type I) ou bien une paire de groupes lin\'eaires (paires duales de type II). Notons $\widetilde{G_1}$ et $\widetilde{G_2}$ leurs images r\'eciproques dans $\widetilde{Sp}\left(W\right)$.

Soit $\pi$ une repr\'esentation lisse irr\'eductible de $\widetilde{G_1}$ quotient de $\omega$. Notons $S\left[\pi\right]$ le plus grand quotient $\pi$-isotypique de $\omega$. Il est de la forme
$$S\left[\pi\right]=\pi_1 \otimes \Theta\left(\pi\right),$$
en tant que $\widetilde{G_1}\times\widetilde{G_2}$-module, o\`u $\Theta\left(\pi\right)$ est une repr\'esentation lisse de longueur finie de $\widetilde{G_1}$. 

Roger Howe et Jean-Loup Waldspurger \cite{walds}, \cite{MVW} ont prouv\'e que, dans le cas o\`u $p$ est impair et que $(G_1,G_2)$ est de type I, si $\Theta(\pi) \neq 0$, alors $\Theta (\pi)$ a un unique quotient irr\'eductible, not\'e $\theta(\pi)$. L'application $\pi \mapsto \theta(\pi)$ est une bijection entre l'ensemble des repr\'esentation lisses irr\'eductibles $\pi$ de $\widetilde{G_1}$ telles que $\Theta(\pi) \neq 0$ et l'ensemble des repr\'esentation lisses irr\'eductibles $\pi'$ de $\widetilde{G_2}$ telles que $\Theta(\pi') \neq 0$. Elle est appel\'ee la correspondance de Howe. Nous nous proposons de montrer un th\'eor\`eme similaire pour les paires duales de type II, valable pour tout $p$, et d'expliciter, en termes des param\`etres de Langlands, la correspondance $\pi \mapsto \theta(\pi)$, ce qui d\'etermine l'ensemble des repr\'esentations $\pi$ telles que $\Theta(\pi) \neq 0$.

Dans le cas des paires duales de type II, Roger Howe, dans un manuscrit non publi\'e, avait prouv\'e la bijectivit\'e de la correspondance. Notre m\'ethode, diff\'erente, rend, de plus, la correspondance explicite.

Passons \`a une pr\'esentation plus d\'etaill\'ee des r\'esultats:

Soit $D$ une alg\`{e}bre \`{a} division de centre $F$ de dimension finie $d^2$ sur $F$ et soient $n$ et $m$ des entiers strictement positifs. On note $\mathcal{M}_{n,m}$ (resp. $\mathcal{M}_{n}$) l'ensemble des matrices $n \times m$ (resp. $n \times n$) \`a coefficients dans $D$. Le groupe ${\rm GL}_n(D)$ des matrices inversibles dans $\mathcal{M}_{n}$ sera not\'e $G_n$.

On note $S_{n,m}=S\left( \mathcal{M}_{n,m}\right)$ l'espace vectoriel des fonctions $ \Phi$ de $\mathcal{M}_{n,m}$ dans $\mathbb{C}$, localement constantes \`{a} support compact.
La repr\'{e}sentation m\'{e}taplectique $\omega_{n,m}$
restreinte \`{a} la paire duale $G_{n}\times G_{m}$ (\textit{cf.} \cite[3.III.1]{MVW}) est la repr\'esentation
\begin{equation}\label{omega}
\omega_{n,m}(g,g')=\nu(g)^{\frac{-m}{2}}\sigma_{n,m}(g,g')\nu(g')^{\frac{n}{2}},\end{equation}
o\`u on note $\nu=|\ry|_F$, la valeur absolue de la norme r\'eduite et $$\sigma _{n,m}:G_{n}\times
G_{m}\rightarrow {\rm GL}\left( S_{n,m} \right) $$ la
repr\'{e}sentation naturelle de $G_{n}\times G_{m}$ d\'{e}finie par 
\begin{equation*}
\sigma _{n,m}\left( g,g^{\prime }\right) \Phi \left( x\right) =\Phi \left(
g^{-1}xg^{\prime }\right),
\end{equation*}
pour $g\in G_{n}, \ g^{\prime }\in G_{m},\ x\in \mathcal{M}_{n,m},\ \Phi \in
S_{n,m} $.

Le r\'esultat principal de cet article est le th\'eor\`eme suivant.
\begin{theo}\label{theo}{\bf(voir corollaire \ref{expl2}).}
Soit $\pi$ une repr\'esentation irr\'eductible de $G_{n}$.
\begin{enumerate}
\item Si $\Hom_{G_n}\left( \omega_{n,m}, \pi \right) \neq 0$, alors il existe une unique repr\'esentation irr\'eductible $\pi'$ de $G_{m}$ telle que
$$\Hom_{G_n \times G_m}\left( \omega_{n,m}, \pi \otimes \pi'\right) \neq 0.$$ De plus, $\dim \left( \Hom_{G_n \times G_m}\left( \omega_{n,m}, \pi \otimes \pi'\right) \right)=1$.
\item Supposons $n \leq m$. Alors  $\Hom_{G_n}\left( \omega_{n,m}, \pi \right) \neq 0$ et, si $\pi$ est le quotient de Langlands (\textit{cf.} section \ref{lademo}) de $\tau_1 \times \dots \times \tau_N$, o\`u $\tau_1,\dots ,\tau_N$ sont des repr\'esentations essentiellement de carr\'e int\'egrable, alors  $\pi'$ est le quotient de Langlands de $$\nu ^{-\frac{m-n-1}{2}} \times \dots \times \nu ^{\frac{m-n-1}{2}} \times \widetilde{\tau_1} \times \dots \times \widetilde{\tau_N},$$
o\`u, pour toute repr\'esentation $\tau$, $ \widetilde{\tau}$ d\'esigne sa contragrediente.
\end{enumerate}
\end{theo}
Ainsi, si l'on note $\pi^\ast$ les param\`etres galoisiens de Langlands de la repr\'esentation $\pi$, les param\`etres de $\theta(\pi)$ sont $\widetilde{\pi}^\ast \oplus 1_{m-n}^\ast$ o\`u on a not\'e $1_{m-n}^\ast$ les param\`etres galoisiens de la repr\'esentation triviale de $G_{m-n}$.

La preuve du th\'eor\`eme \ref{theo} se d\'ecompose en trois parties. D'abord, la th\'eorie des fonctions z\^eta de Godement-Jacquet \cite{GJ} nous fournit un entrelacement entre $\omega_{n,n}$ et $\pi \otimes \widetilde{\pi}$ pour toute repr\'esentation irr\'eductible $\pi$, ce qui implique, avec un argument classique (\textit{cf.} \cite[3.II.5]{MVW}), que, si $n \leq m$, alors  $\Hom_{G_n}\left( \omega_{n,m}, \pi \right) \neq 0$.

Pour montrer l'unicit\'e de la repr\'esentation $\theta(\pi)$, on a besoin d'utiliser l'article \cite{Min1} o\`u il est prouv\'e que l'induite parabolique d'une repr\'esentation irr\'eductible a, dans beaucoup de cas, une seule sous-repr\'esentation irr\'eductible. 

Dans la section \ref{filtraciones1}, on d\'ecrit explicitement le \textit{bord} de la repr\'esentation m\'etaplectique (un sous-ensemble de repr\'esentations de $G_n$) et on trouve que, pour toute repr\'esentation qui n'appara\^it pas dans ce bord, la repr\'esentation $\theta(\pi)$ est unique. 

Apr\`es, dans la section \ref{filtraciones}, on s'inspire de l'article \cite{kudla}, et on calcule une filtration des foncteurs de Jacquet de la repr\'esentation m\'etaplectique. Ceci nous permet de montrer dans les sections \ref{application} et \ref{unic}, par r\'ecurrence, l'unicit\'e de la repr\'esentation $\theta(\pi)$, pour les \textit{bonnes} repr\'esentations $\pi$. Les \textit{mauvaises} repr\'esentations sont celles qui ont un foncteur de Jacquet bien pr\'ecis. Or, ces repr\'esentations n'apparaissent pas dans le bord de la repr\'esentation m\'etaplectique!

Pour montrer la param\'etrisation de la correspondance on a, \`a nouveau, deux cas. Par r\'ecurrence, le cas des \textit{bonnes} repr\'esentations n'est pas tr\`es difficile et d\'ecoule de \cite[Corollaire A.3]{Min1}. Pour les autres, on utilise, dans la section \ref{fin}, des propri\'et\'es subtiles de la classification de Zelevinsky des repr\'esentations irr\'eductibles en termes de segments.

Je voudrais particuli\`erement remercier Colette M\oe glin qui m'a prodigu\'e nombre de conseils et id\'ees ainsi que Guy Henniart pour toutes ses suggestions et critiques. Je remercie  aussi Goran Muic et Vincent S\'echerre pour les remarques et corrections qu'ils m'ont faites \`a propos de cet article.

\section{Pr\'eliminaires}
Soient $F$ un corps commutatif localement compact non archim\'{e}dien de caract\'{e}ristique
r\'{e}siduelle $p>0$, $D$ une alg\`{e}bre \`{a} division de centre $F$ et de dimension finie $d^2$ sur $F$.

Soient $n,m$ deux entiers strictement positifs. On note $\mathcal{M}_{n,m}$ (resp. $\mathcal{M}_{n}$) l'ensemble des matrices $n \times m$ (resp. $n \times n$) \`a coefficients dans $D$ et $\ry :\mathcal{M}_n\rightarrow F$ la norme r\'{e}duite. Le groupe ${\rm GL}_n(D)$ des matrices inversibles dans $\mathcal{M}_{n}$ sera not\'e $G_n$. Le groupe trivial sera not\'e $G_0$.

A toute partition (ordonn\'ee) $\alpha=\left( n_1,\dots,n_r\right)$ de l'entier $n$, correspond une d\'ecomposition en blocs des matrices carr\'ees d'ordre $n$. On notera $M_{\alpha}$ le sous-groupe de $G_n$ form\'e des matrices inversibles diagonales par blocs, $P_{\alpha}$ (resp. $\overline{P}_{\alpha}$) le sous-groupe form\'e des matrices triangulaires sup\'erieures (resp. inf\'erieures) par blocs, et $U_{\alpha}$ le sous-groupe de $P_{\alpha}$ form\'e des \'el\'ements dont les blocs diagonaux sont des matrices unit\'e. Le sous-groupe $\overline{P}_{\alpha}$ est conjugu\'e \`a $P_{\overline{\alpha}}$ dans $G_n$ avec $\overline{\alpha}=\left( n_r,\dots,n_1\right)$. 

Dans cet article on ne consid\'erera que des repr\'esentations lisses complexes et le mot \textit{repr\'esentation} voudra toujours dire \textit{repr\'esentation lisse complexe}. On notera $\Irr (G_n)$ l'ensemble des classes d'\'equivalence des repr\'esentations irr\'eductibles de $G_n$. La repr\'esentation triviale de $G_n$ sera not\'ee $1_n$.

Si $\pi$ et $\pi'$ sont deux repr\'esentations d'un groupe $G$, on notera
$$\Hom_G \left(\pi, \pi' \right)$$
l'espace des entrelacements entre $\pi$ et $\pi'$. On omettra l'indice $G$ quand il n'y a pas de confusion.

On note $\sharp\!-\!r_{n_1,\dots,n_r}^{G_n}$ (resp. $\sharp\!-\! \overline{r}_{n_1,\dots,n_r}^{G_n}$) le foncteur de Jacquet non normalis\'e associ\'e au parabolique standard $P_{\alpha}$ (resp. $\overline{P}_{\alpha}$).  On note 
\begin{eqnarray*}
&&r_{n_1,\dots,n_r}^{G_n}=\delta_{P_\alpha}^{-1/2} \sharp\!-\!r_{n_1,\dots,n_r}^{G_n},\\
&\text{(resp.}& \overline{r}_{n_1,\dots,n_r}^{G_n}=\delta_{\overline{P}_\alpha}^{-1/2} \sharp\!-\!\overline{r}_{n_1,\dots,n_r}^{G_n} \text{ ),}
\end{eqnarray*}
le foncteur de Jacquet normalis\'e.

Etant donn\'ee une repr\'esentation $\rho_i$ de chaque $G_{n_i}$, on notera $$\sharp\!-\!\ind^{G_n}_{P_{\alpha}}\left(\rho_1 \otimes \dots \otimes \rho_r\right)$$ l'induite parabolique non normalis\'ee, o\`u on a prolong\'e la repr\'esentation $\rho_1 \otimes \dots \otimes \rho_r$ trivialement sur $U_\alpha$.

On note aussi $\rho_1\times \dots \times \rho_r$ la repr\'esentation $$\ind^{G_n}_{P_{\alpha}}\left(\rho_1 \otimes \dots \otimes \rho_r\right)=\delta_{P_\alpha}^{1/2}\sharp\!-\!\ind^{G_n}_{P_{\alpha}}\left(\rho_1 \otimes \dots \otimes \rho_r\right),$$ induite parabolique normalis\'ee.

Soit $\pi$ une repr\'esentation de $G_n$; on a un isomorphisme canonique (r\'eciprocit\'e de Frobenius): 
\begin{equation}\label{frob}
\Hom \left( \pi, \rho_1\times \dots \times \rho_r \right) \simeq \Hom \left( r_{n_1,\dots,n_r}^{G_n}(\pi), \rho_1 \otimes \dots \otimes \rho_r \right).
\end{equation}

On a une formule similaire pour l'induction non normalis\'ee et le foncteur de Jacquet non normalis\'e.
On dispose aussi d'un isomorphisme de r\'eciprocit\'e \textit{\`a la Casselman} (\textit{cf.} \cite[Theorem 20]{ber}):
\begin{equation}\label{frobcas}
\Hom \left( \rho_1\times \dots \times \rho_r , \pi\right) \simeq \Hom \left( \rho_1 \otimes \dots \otimes \rho_r ,\overline{r}_{n_1,\dots,n_r}^{G_n}(\pi) \right).
\end{equation}
Pour l'induction non normalis\'ee et le foncteur de Jacquet non normalis\'e, la formule pr\'ec\'edente devient:
\begin{eqnarray}\label{frobcas2}
&&\Hom \left( \sharp\!-\!\ind^{G_n}_{P_{\alpha}}\left(\rho_1 \otimes \dots \otimes \rho_r\right) , \pi\right) \simeq  \notag\\ && \hspace{4cm} \Hom \left( \rho_1 \otimes \dots \otimes \rho_r ,\delta_{P_\alpha}\sharp\!-\!\ \overline{r}_{n_1,\dots,n_r}^{G_n}(\pi) \right).
\end{eqnarray}

Soient $n,t \in \mathbb{Z}$, $1 \leq t \leq n$, $\pi \in \Irr(G_n)$, $\chi \in \Irr(G_t)$. On notera $\Jac_\chi(\pi) \neq 0$ (resp. $\overline{\Jac}_\chi (\pi) \neq 0$) s'il existe $\rho \in \Irr(G_{n-t})$ tel que $\Hom\left( \pi, \chi \times \rho \right) \neq 0$ (resp. $\Hom\left( \pi, \rho \times \chi \right) \neq 0$).

On utilisera \`a plusieurs reprises la proposition suivante \cite[Proposition 7.1]{Min1}:
\begin{proposition}
Soient $\pi, \pi' \in \Irr, \rho \in \mathcal{C}$. Les conditions suivantes sont \'equivalentes:\begin{enumerate}
\item $\Hom \left( \pi' , \pi \times \rho \right) \neq 0;$
\item $\Hom \left( \rho \times \pi, \pi' \right) \neq 0. $
\end{enumerate}
\end{proposition}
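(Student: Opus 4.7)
The plan is to apply the reciprocity isomorphisms~\eqref{frob} and~\eqref{frobcas} to both $\Hom$-spaces in order to reduce the statement to a symmetric assertion about a single Jacquet module of $\pi'$, and then to use the cuspidality of $\rho$ to pass from one side to the other. Let $t$ denote the rank of $\rho$; one may assume $\pi \in \Irr(G_n)$ and $\pi' \in \Irr(G_{n+t})$, since otherwise both Hom-spaces vanish trivially.

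First I would translate condition~(1) by Frobenius reciprocity~\eqref{frob}: it is equivalent to $\pi\otimes\rho$ appearing as a quotient of the $M_{n,t}$-module $r_{n,t}^{G_{n+t}}(\pi')$. Condition~(2), translated through Casselman's reciprocity~\eqref{frobcas} applied to the parabolic $\overline{P}_{t,n}$, is equivalent to the irreducible $\rho\otimes\pi$ embedding into the $M_{t,n}$-module $\overline{r}_{t,n}^{G_{n+t}}(\pi')$.

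Next I would exploit the conjugacy noted in the preliminaries between $\overline{P}_{t,n}$ and $P_{\overline{(t,n)}}=P_{n,t}$: a Weyl element $w$ exchanging the two diagonal blocks of sizes $t$ and $n$ conjugates $\overline{P}_{t,n}$ onto $P_{n,t}$ and identifies the Levi $M_{t,n}$ with $M_{n,t}$ by swapping the two factors $G_t$ and $G_n$. A short modulus computation gives $\delta_{\overline{P}_{t,n}}^{w}=\delta_{P_{n,t}}$, so the normalizations are compatible, yielding an isomorphism of $M_{n,t}$-modules $\overline{r}_{t,n}^{G_{n+t}}(\pi')^{w}\simeq r_{n,t}^{G_{n+t}}(\pi')$. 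Under this isomorphism $\rho\otimes\pi$ corresponds to $\pi\otimes\rho$, and so condition~(2) becomes the condition that $\pi\otimes\rho$ is a subrepresentation of $r_{n,t}^{G_{n+t}}(\pi')$.

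The problem is thereby reduced to the symmetric statement: \emph{$\pi\otimes\rho$ appears as a quotient of $r_{n,t}^{G_{n+t}}(\pi')$ if and only if it appears as a subrepresentation.} I expect this to be the main obstacle, and the assumption that $\rho$ is cuspidal is essential here. My strategy would be to isolate the $\rho$-isotypic component for the $G_t$-action on $r_{n,t}^{G_{n+t}}(\pi')$: as $\rho$ is supercuspidal, this component is a direct summand of the form $V\otimes\rho$ for some finite-length $G_n$-module $V$, so the question reduces to showing that $\pi$ is a subrepresentation of $V$ if and only if it is a quotient of $V$. Combining the irreducibility of $\pi'$ with a Bernstein--Zelevinsky-style analysis of the $\rho$-derivative of $\pi'$ should then yield the required symmetry and conclude the proof.
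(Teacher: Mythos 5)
You should first be aware that the paper itself contains no proof of this statement: it is quoted verbatim from \cite[Proposition 7.1]{Min1}, so your attempt has to stand on its own rather than be measured against an argument in the text. Your reduction is correct as far as it goes: by \eqref{frob}, condition (1) is equivalent to $\pi\otimes\rho$ being a quotient of $M=r_{n,t}^{G_{n+t}}(\pi')$, and by \eqref{frobcas} together with conjugation by the Weyl element exchanging the two blocks (your modulus check $\delta_{\overline{P}_{t,n}}=\delta_{P_{n,t}}^{w}$ on the Levi is right), condition (2) is equivalent to $\pi\otimes\rho$ being a subrepresentation of the same module $M$. This is a clean and legitimate first step.

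The gap is in your final paragraph, which is exactly where all the content of the proposition lies. Two problems. First, the claim that the $\rho$-part of $M$ is a direct summand \emph{of the form} $V\otimes\rho$ is unjustified: the generalized $\rho$-isotypic part is indeed a direct summand (extensions between non-isomorphic cuspidals vanish), but Jacquet modules of irreducible representations are not semisimple and $\mathrm{Ext}^1_{G_t}(\rho,\rho)\neq 0$; already for ${\rm GL}_2(F)$ and $\pi'=\chi\times\chi$ irreducible, $r_{1,1}(\pi')$ is a non-split self-extension of $\chi\otimes\chi$ whose extension class is ``diagonal'' (it must vanish on the centre), so it does not factor as $V\otimes\chi$. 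Second, and more seriously, even granting that form, the statement you reduce to --- $\pi$ is a subrepresentation of $V$ if and only if it is a quotient of $V$ --- is simply false for an arbitrary finite-length $V$ (any non-split extension of two non-isomorphic irreducibles is a counterexample), so the required symmetry must be extracted from specific properties of Jacquet modules of irreducible representations at a cuspidal point of the support. The sentence asserting that ``a Bernstein--Zelevinsky-style analysis of the $\rho$-derivative should then yield the required symmetry'' is the proposition restated, not an argument for it. To close the gap you would need the actual mechanism of \cite{Min1} --- the geometric lemma applied to $r_{n,t}(\rho\times\pi)$ combined with the uniqueness of the irreducible submodule, resp.\ quotient, of $\pi\times\rho$, resp.\ $\rho\times\pi$ (Th\'eor\`emes 5.1 and 5.6 of \cite{Min1}), or an equivalent multiplicity-one statement for $\pi\otimes\rho$ in $M$.
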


\begin{corollary}\label{cambio}
Soient $\pi, \pi' $ deux repr\'esentations irr\'eductibles et  $\rho$ une repr\'esentation cuspidale. Les conditions suivantes sont \'equivalentes:\begin{enumerate}
\item $\Hom \left( \pi' , \pi \times \rho \times \dots \times \rho \right) \neq 0;$
\item $\Hom \left( \rho \times \dots \times \rho \times \pi, \pi' \right) \neq 0. $
\end{enumerate}
\end{corollary}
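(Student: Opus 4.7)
I would argue by induction on the integer $k \geq 1$, the number of copies of $\rho$ appearing in the statement. The base case $k=1$ is exactly the preceding proposition (\cite[Proposition 7.1]{Min1}), so nothing is required there.

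For the inductive step, assume the equivalence has been established with $k-1$ copies of $\rho$, for every pair of irreducible representations. I treat $(1) \Rightarrow (2)$; the converse is symmetric (one can also derive it from the same argument applied to contragredients). Suppose $\Hom(\pi', \pi \times \rho^{\times k}) \neq 0$, and regroup $\pi \times \rho^{\times k} = (\pi \times \rho) \times \rho^{\times(k-1)}$. A Jordan--H\"older composition series $0 = V_0 \subset V_1 \subset \cdots \subset V_\ell = \pi \times \rho$ with irreducible subquotients $\sigma_i := V_i / V_{i-1}$ produces, by exactness of parabolic induction, a corresponding filtration of $\pi \times \rho^{\times k}$ whose successive quotients are $\sigma_i \times \rho^{\times(k-1)}$. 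The irreducibility of $\pi'$ forces an embedding $\pi' \hookrightarrow \sigma_j \times \rho^{\times(k-1)}$ for at least one index $j$, and the inductive hypothesis applied to the irreducible $\sigma_j$ (in the role of $\pi$) yields $\Hom(\rho^{\times(k-1)} \times \sigma_j, \pi') \neq 0$. Once $\sigma_j$ is known to be a subrepresentation of $\pi \times \rho$, \cite[Proposition 7.1]{Min1} furnishes a surjection $\rho \times \pi \twoheadrightarrow \sigma_j$; inducing this by $\rho^{\times(k-1)}$ gives a surjection $\rho^{\times k} \times \pi \twoheadrightarrow \rho^{\times(k-1)} \times \sigma_j$, which composed with the nonzero map to $\pi'$ produces $(2)$.

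The delicate point I anticipate is that $\sigma_j$ is a priori only a subquotient of $\pi \times \rho$, whereas \cite[Proposition 7.1]{Min1} requires it to be a genuine subrepresentation. I would handle this by choosing $j$ minimal among indices with $\pi' \cap (V_j \times \rho^{\times(k-1)}) \neq 0$, so that $\pi'$ already embeds into the first nonzero filtration step, and then exploiting the cuspidality of $\rho$ to place $\sigma_j$ in the socle of $\pi \times \rho$. If this socle argument does not close cleanly in all cases, the fallback is to translate both conditions via Frobenius and Casselman reciprocity (\ref{frob}) and (\ref{frobcas}) into statements about the Jacquet modules $r_{n_\pi, t, \ldots, t}^{G_n}(\pi')$ and $\overline{r}_{t, \ldots, t, n_\pi}^{G_n}(\pi')$ respectively, and use the cuspidality of $\rho$ via Bernstein-component considerations to pass the nonvanishing of one side through the iterated Jacquet functor to the other.
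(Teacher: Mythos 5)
The gap you flag yourself is genuine and your proposed repairs do not close it. Choosing $j$ minimal with $\pi' \cap \left(V_j \times \rho \times \dots \times \rho\right) \neq 0$ only yields an embedding of $\pi'$ into the subquotient $\sigma_j \times \rho \times \dots \times \rho$; nothing forces $j=1$, so $\sigma_j$ remains a subquotient, not a submodule, of $\pi \times \rho$, and Proposition 7.1 of \cite{Min1} cannot be applied to produce the required surjection $\rho \times \pi \twoheadrightarrow \sigma_j$. The tool that actually closes the induction, which you gesture at with ``exploiting cuspidality\dots socle'' but never name, is \cite[Th\'eor\`eme 5.1]{Min1}: the representation $\pi \times \rho \times \dots \times \rho$ has a \emph{unique} irreducible submodule; dually, \cite[Th\'eor\`eme 5.6]{Min1} gives a unique irreducible quotient of $\rho \times \dots \times \rho \times \pi$. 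The paper's proof combines exactly these two facts with the preceding proposition and induction. Concretely: if $\sigma$ denotes the unique irreducible submodule of $\pi \times \rho \times \dots \times \rho$ with $k-1$ copies of $\rho$, then $\sigma \times \rho$ embeds into the $k$-fold product, whose unique irreducible submodule is $\pi'$, hence $\pi' \hookrightarrow \sigma \times \rho$; Proposition 7.1 then gives a surjection $\rho \times \sigma \twoheadrightarrow \pi'$, and composing with the inductive surjection $\rho \times \dots \times \rho \times \pi \twoheadrightarrow \sigma$ yields (2), with Th\'eor\`eme 5.6 playing the symmetric role for the converse. Once these uniqueness results are invoked, the Jordan--H\"older detour in your plan is unnecessary; without them, the argument does not close.
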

\begin{proof}
Puisque, par \cite[Th\'eor\`eme 5.1]{Min1}, $\pi \times \rho \times \dots \times \rho$ n'a qu'un seul sous-module irr\'eductible et que, par \cite[Th\'eor\`eme 5.6]{Min1},  $ \rho \times \dots \times \rho \times \pi$ n'a qu'un seul quotient irr\'eductible, le corollaire d\'ecoule de la proposition pr\'ec\'edente par r\'ecurrence.
\end{proof}
Si $X$ est un espace localement profini, on note $S(X)$ l'espace vectoriel des fonctions $ \Phi :X \rightarrow \mathbb{C}$ localement constantes \`{a} support compact.
Le lemme suivant sera utilis\'e dans le calcul explicite de la correspondance:
\begin{lemma}\label{alfin}
Soit $X$ un espace localement profini, $X'$ un sous-espace ferm\'e de $X$. Supposons qu'un groupe  localement profini $G$ agit de fa\c{c}on continue sur $X$ et que $G \cdot X'=X$. Notons $H$ le stabilisateur de $X'$ dans $G$. Notons aussi $\pi$ la repr\'esentation naturelle (\textit{cf.} \cite[\textsection 1.2.2]{BZ1}) de $G$ dans $S(X)$ et $\rho$ la repr\'esentation naturelle de $H$ dans $S(X')$. Alors:
$$\pi \simeq \sharp\!-\!\ind^G_H\left( \rho \right).$$
\end{lemma}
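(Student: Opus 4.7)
The plan is to exhibit an explicit $G$-equivariant isomorphism $\Psi : S(X) \to \sharp\!-\!\ind^G_H(\rho)$. To $\Phi \in S(X)$ I associate the function $f_\Phi : G \to S(X')$ defined by $f_\Phi(g)(x') = \Phi(g^{-1} \cdot x')$ for $g \in G$ and $x' \in X'$. The routine verifications to carry out are: (i) $f_\Phi(g)$ belongs to $S(X')$, since $X'$ is closed in $X$ and $\Phi$ is locally constant with compact support, so its pullback by $g^{-1}$ restricts to an element of $S(X')$; (ii) the transformation law $f_\Phi(hg) = \rho(h) f_\Phi(g)$ for $h \in H$, which follows from $h \cdot X' = X'$ by a direct calculation; (iii) smoothness of $f_\Phi$ (right-invariance under some compact open subgroup), inherited from the open stabilizer of $\Phi$ in the smooth representation $\pi$; (iv) $\supp(f_\Phi)$ is compact modulo $H$, using the hypothesis $G \cdot X' = X$ and the compactness of $\supp(\Phi)$; and (v) $\Psi$ intertwines the two $G$-actions.

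Injectivity of $\Psi$ is immediate: if $f_\Phi \equiv 0$, then $\Phi$ vanishes on $g^{-1} \cdot X'$ for every $g \in G$, hence on $G \cdot X' = X$, so $\Phi = 0$.

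For surjectivity I would construct the inverse. Given $f \in \sharp\!-\!\ind^G_H(\rho)$, define $\Phi_f : X \to \mathbb{C}$ by $\Phi_f(x) = f(g)(x')$ whenever $x = g^{-1} \cdot x'$ with $x' \in X'$; such a pair $(g,x')$ exists because $G \cdot X' = X$. The main obstacle, and the heart of the lemma, is well-definedness: if $g_1^{-1} x_1' = g_2^{-1} x_2'$ with $x_i' \in X'$, one needs $f(g_1)(x_1') = f(g_2)(x_2')$. Setting $h = g_2 g_1^{-1}$, one has $h \cdot x_1' = x_2' \in X'$, and then the $H$-equivariance of $f$ yields the required equality provided $h \in H$; the crux is therefore to show, using the hypothesis that $H$ is exactly the stabilizer of $X'$ in $G$, that the natural map $G \times_H X' \to X$ is a bijection. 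Once well-definedness is secured, $\Phi_f$ is locally constant with compact support in $X$ (inherited from the smoothness and the compact-support-modulo-$H$ of $f$), and the identities $\Psi(\Phi_f) = f$ and $\Phi_{f_\Phi} = \Phi$ are immediate from the constructions.
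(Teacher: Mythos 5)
Your argument follows the same route as the paper's: an explicit map $\Phi\mapsto f_\Phi$ into the induced representation, a candidate inverse $f\mapsto\Phi_f$, and a reduction of the whole lemma to the well-definedness of $\Phi_f$. That is exactly the structure of the paper's proof, and you are right to single out the well-definedness of the inverse as the crux.

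Where I would push back is on the sentence in which you say the needed implication "$h$ carries some point of $X'$ into $X'$ $\Rightarrow h\in H$'' follows "using the hypothesis that $H$ is exactly the stabilizer of $X'$ in $G$.'' That hypothesis ($H=\{g\in G : g\cdot X'=X'\}$) does not give you this: an element can move one point of $X'$ to another point of $X'$ without preserving $X'$ globally. A concrete counterexample to the lemma as literally stated: take $G=S_3$ acting on $X=\{1,2,3\}$, with $X'=\{1,2\}$. Then $X'$ is closed, $G\cdot X'=X$, and $H=\langle(1\,2)\rangle$; but $(1\,3)$ fixes $2\in X'$ while $(1\,3)\notin H$, and indeed $\dim S(X)=3$ whereas $\dim\,\sharp\!-\!\ind^G_H\!\left(S(X')\right)=6$. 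So what is actually needed is the stronger (tacit) hypothesis that $g\cdot x'\in X'$ for a single $x'\in X'$ already forces $g\in H$, equivalently that $G\times_H X'\to X$ is a bijection. This holds in every situation where the paper invokes the lemma, but neither your sketch nor the paper's own proof verifies it: the paper simply writes ``si l'on pose $h=g_1^{-1}g_2$ on a que $h\in H$'' with no justification. So this is a gap inherited from the source rather than one you introduced; still, since you explicitly identify it as the heart of the matter, you should state the missing condition as a hypothesis (or check it in each application) rather than attribute it to the stabilizer property alone.
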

\begin{proof}
Posons
\begin{eqnarray*}
\Xi: \pi & \rightarrow & \sharp\!-\!\ind^G_H\left( \rho \right) \\
\phi & \mapsto & \left( g\mapsto \left(\pi(g)\phi\right) |_{X'}  \right).
\end{eqnarray*}
$\Xi$ est bien d\'efini car $X'$ est ferm\'e dans $X$ (\textit{cf.} \cite[\textsection 1.1.8]{BZ1}) et c'est un entrelacement entre $\pi$ et $\sharp\!-\!\ind^G_H\left( \rho \right)$.

Construisons une inverse:
Soit $f \in \sharp\!-\!\ind^G_H\left( \rho \right)$. On d\'efinit $\phi_f \in S(X)$ par
$$\phi_f(x)= f(g)(x'),$$
si $x= g \cdot x'$ et $g\in G$ et $x' \in X'$. Puisque $G \cdot X'=X$ des tels couples $(g, x')$ existent et si $g_1, g_2 \in G$ et $x'_1, x'_2 \in X'$ sont tels que $x = g_1 \cdot x'_1= g_2 \cdot x'_2$, alors $x'_1= g_1^{-1} g_2 \cdot x'_2$ et donc, si l'on pose $h= g_1^{-1} g_2$ on a que $h\in H$.

Ainsi
\begin{eqnarray*}
 f(g_1)(x'_1)&=& f(g_1)(g_1^{-1} g_2 \cdot x'_2) \\
 &=& \rho(h) f(g_1) (x'_2) \\
 &=& f(g_1h)(x'_2) \\
 &=& f(g_2)(x'_2).
\end{eqnarray*}
Donc $\phi_f$ est bien d\'efinie et le morphisme $f \mapsto \phi_f$ est un entrelacement entre $\sharp\!-\!\ind^G_H\left( \rho \right)$ et $\pi$, inverse de $\Xi$.
\end{proof}

On note $S_{n,m}=S\left( \mathcal{M}_{n,m}\right)$. La repr\'{e}sentation m\'{e}taplectique $\omega_{n,m}$
restreinte \`{a} la paire duale $G_{n}\times G_{m}$ (\textit{cf.} Introduction) est la repr\'esentation
$$\omega_{n,m}(g,g')=\nu(g)^{\frac{-m}{2}}\sigma_{n,m}(g,g')\nu(g')^{\frac{n}{2}}.$$

Il est plus naturel de travailler avec la repr\'esentation m\'etaplectique \textit{tordue} $\sigma_{n,m}$ et de calculer ses quotients irr\'eductibles. On d\'eduira ensuite imm\'ediatement les r\'esultats pour la repr\'esentation $\omega_{n,m}$.

Il est aussi tr\`es pratique d'utiliser la notation suivante: on a deux groupes lin\'eaires agissant, par multiplication, sur un espace de matrices \`a gauche et \`a droite. Dor\'enavant, pour diff\'erentier ces deux actions, on notera $G'$, $P'$ et $U'$ les groupes lin\'eaire, parabolique et unipotent respectivement, agissant \`a droite et on gardera les notations $G$, $P$ et $U$ pour ces groupes quand ils agissent \`a gauche. De m\^eme, en cas d'ambigu\"it\'e, on notera $\nu'$ le caract\`ere $\nu$ quand il agit sur $G'$. Il peut sembler une notation un peu artificielle mais elle facilite \'enorm\'ement la compr\'ehension des calculs.

On permet les cas $m=0$ ou $n=0$ (avec $G_0=0$ ou $G'_0=0$) pour lesquels $\mathcal{M}=0$ et $\sigma _{0,m}\simeq \mathbb{C}$ est la repr\'esentation triviale de $G'_m$ et $\sigma _{n,0}\simeq \mathbb{C}$ est la repr\'esentation triviale de $G_n$. 

\section{Le bord de la repr\'esentation m\'etaplectique}\label{filtraciones1}
Dans cette section, on rappelle les r\'esultats de \cite[3.III]{MVW} et on en d\'eduit quelques premi\`eres cons\'equences. On fixe des entiers positifs $n$ et $m$.

Commen\c{c}ons par rappeler que la th\'eorie des fonctions z\^eta de Gode\-ment-Jacquet \cite{GJ} nous fournit, pour toute repr\'esentation irr\'eductible $\pi$ de $G_n$, un entrelacement (\textit{cf.} \cite[3.II.7]{MVW}) entre $\sigma_{n,n}$ et $\pi \otimes \widetilde{\pi}$. Par \cite[3.II.5]{MVW}, on d\'eduit que,  pour toute repr\'esentation irr\'eductible $\pi$ de $G_n$, il existe un sous-quotient irr\'eductible $\pi'$ de $\sharp\!-\!\ind^{G'_m}_{ P'_{m-n,n}}( 1_{m-n} \otimes  \widetilde{\pi})$ tel que
\begin{equation}\label{una}
\Hom_{G_n\times G'_m}\left( \sigma_{n,m}, \pi\otimes\pi'\right) \neq 0.\end{equation}

Ainsi, si $n \leq m$, alors  $\Theta(\pi) \neq 0$. Le probl\`eme est de montrer que ce sous-quotient $\pi'$ est l'unique satisfaisant \`a \eqref{una} et de d\'eterminer ses param\`etres.

D'un autre c\^ot\'e, la repr\'{e}sentation $\sigma _{n,m}$ admet (\textit{cf.} \cite[3.II.2]{MVW}) une filtration 
\begin{equation*}
0=S_{t+1}\subset S_{t}\subset \dots\subset S_{1}\subset S_{0}=S_{n,m},
\end{equation*}
o\`{u} $S_{k}$ est le sous-espace vectoriel de $S_{n,m}$ form\'e des fonctions dont le support est form\'e des matrices de rang \footnote{On utilise la d\'efinition de rang sur une alg\`ebre \`a division de \cite[\textsection 10.12]{Bou}}  plus grand ou \'egal \`a $k$, $0\leq k\leq t=\min \left( n,m\right)$.  L'espace $S_{k+1}$ est ouvert dans $S_k$ par \cite[\textsection 1.1.8]{BZ1} et, en appliquant le lemme \ref{alfin} avec $X'=\left( 
\begin{array}{ll}
0 & 0 \\ 
0 & 1_i
\end{array}
\right)$, on montre dans \cite[3.II.2]{MVW} qu'on a un isomorphisme
$$\sigma _{k}= S_k/S_{k+1} \simeq \sharp\!-\!\ind^{G_{n}G'_m}_{\overline{P}_{n-k,k}P'_{m-k,k}}\left(\mu
_{k}\right), $$ o\`{u} $\mu _{k}$ est la repr\'{e}sentation de $%
\overline{P}_{n-k,k}P'_{m-k,k}$ sur $S\left( G_{k}\right) $ d\'{e}finie par: 
\begin{equation*}
\mu _{k}\left( p,p^{\prime }\right) \Phi \left( h\right) =\Phi \left(
p_{4}^{-1}hp_{4}^{\prime }\right) =\rho _{k}\left( p_{4},p_{4}^{\prime
}\right) \Phi \left( h\right),
\end{equation*}
pour $\Phi \in S\left( G_{k}\right) ,$ $h\in G_{k}$, $p=\left( 
\begin{array}{ll}
p_{1} & 0 \\ 
p_3 & p_{4}
\end{array}
\right) ,$ $p^{\prime }=\left( 
\begin{array}{ll}
p_{1}^{\prime } & p_{2}^{\prime } \\ 
0 & p_{4}^{\prime }
\end{array}
\right) $ et $\rho_k$ la repr\'esentation naturelle de $ G_{k} \times  G'_{k}$ sur  $S\left( G_{k}\right)$ d\'efinie par
\begin{equation*}
\rho _{k}\left( p_{4},p_{4}^{\prime
}\right) \Phi \left( h\right)=
\Phi \left(
p_{4}^{-1}hp_{4}^{\prime } \right).\end{equation*}
\begin{defi}
On dit que $\pi \in \Irr(G_n)$ appara\^it dans le bord de la repr\'esentation $\sigma_{n,m}$ s'il existe $k < n$ tel que $\Hom_{G_n}\left(\sigma_k, \pi \right) \neq 0$.
\end{defi}

\begin{lemma}\label{primero}
Soient $\pi \in \Irr (G_n)$, $\pi' \in \Irr (G'_m)$ telles que $\Hom (\sigma_k,\pi\otimes \pi') \neq 0$. Alors il existe $\tau, \tau' \in \Irr (G_k)$ telles que 
\begin{eqnarray*}\Hom \left( \sharp\!-\!\ind^{G_{n}}_{\overline{P}_{n-kk}}\!\! \left( 1_{n-k} \otimes\tau\right)\otimes \sharp\!-\!\ind^{G'_m}_{P'_{m-k,k}}\!\! \left( 1_{m-k} \otimes \tau'\right), \pi \otimes \pi' \right) \neq 0.
\end{eqnarray*}
\end{lemma}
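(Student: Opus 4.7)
The plan is to apply Frobenius reciprocity in both directions, exploiting the simple structure of the inducing representation $\mu_k$.

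First, observe that when $\mu_k$ is restricted from $\overline{P}_{n-k,k} \times P'_{m-k,k}$ to its Levi $M \times M' = G_{n-k} \times G_k \times G_{m-k} \times G'_k$, the block factors $G_{n-k}$ and $G_{m-k}$ act trivially; only the $G_k \times G'_k$-part acts nontrivially, through the bi-regular representation $\rho_k$ on $S(G_k)$. Applying the Casselman-type reciprocity \eqref{frobcas2} to the hypothesis $\Hom(\sigma_k, \pi \otimes \pi') \neq 0$ produces, up to a modulus twist, a nonzero element of $\Hom_{M \times M'}(\mu_k, \Pi \otimes \Pi')$, where $\Pi$ and $\Pi'$ are the appropriate unnormalized Jacquet modules of $\pi$ and $\pi'$ with respect to the parabolics opposite to $\overline{P}_{n-k,k}$ and $P'_{m-k,k}$. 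Using the triviality of $\mu_k$ on $G_{n-k}\times G_{m-k}$, this hom space identifies with $\Hom_{G_k\times G'_k}(\rho_k, A\otimes A')$, where $A:=\Pi^{G_{n-k}}$ and $A':=\Pi'^{G_{m-k}}$ are the spaces of invariants.

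In particular $A\otimes A'\neq 0$, so both $A$ and $A'$ are nonzero. Both are moreover of finite length as $G_k$-representations: a Jordan--H\"older analysis of $\Pi$ shows that the constituents of $A$ are precisely the $G_k$-factors of those JH pieces of $\Pi$ of the form $1_{n-k}\otimes\tau$, and similarly for $A'$. Hence $A$ admits an irreducible subrepresentation $\tau\in\Irr(G_k)$, and likewise there is $\tau'\in\Irr(G_k)$ embedding into $A'$.

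To conclude, I reinterpret the embedding $\tau\hookrightarrow A$ through Frobenius reciprocity in the opposite direction: it yields a nonzero element of $\Hom(\sharp\!-\!\ind^{G_n}_{\overline{P}_{n-k,k}}(1_{n-k}\otimes\tau),\pi)$, and analogously for $\tau'\hookrightarrow A'$. Tensoring these two intertwinings produces the desired nonzero map. The only genuine subtlety is tracking the modulus characters appearing in the unnormalized Frobenius \eqref{frobcas2}: since Frobenius is applied for exactly the same parabolics $\overline{P}_{n-k,k}$ and $P'_{m-k,k}$ in both directions, the twists on the two sides agree and no extra adjustment is required. I expect this bookkeeping, together with the verification that $\mu_k$ acts trivially on the $G_{n-k}$ and $G_{m-k}$ blocks (which is straightforward from the formula $\mu_k(p,p')\Phi(h)=\Phi(p_4^{-1}hp'_4)$), to be the main technical points of the proof.
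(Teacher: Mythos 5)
Your proof is correct and matches the paper's argument in substance: apply Casselman reciprocity to pass to a nonzero map from $\mu_k$ into the (twisted) Jacquet modules, use the triviality of $\mu_k$ on the $G_{n-k}\times G_{m-k}$ blocks to extract an irreducible piece of the form $(1_{n-k}\otimes\tau)\otimes(1_{m-k}\otimes\tau')$ sitting inside those Jacquet modules, and apply reciprocity again; the only difference is presentational, since you isolate the invariant subspaces $A$, $A'$ and choose $\tau$, $\tau'$ separately, whereas the paper takes one irreducible subrepresentation of the image $V$ of the intertwiner. One small imprecision worth noting: the constituents of $A=\Pi^{G_{n-k}}$ need not be \emph{precisely} the $G_k$-factors of the JH pieces $1_{n-k}\otimes\tau$ of $\Pi$ (taking invariants is only left exact), but this is harmless since all you actually use is that $A$, being a submodule of the finite-length module $\Pi$, has finite length and hence an irreducible submodule.
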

\begin{proof}
Soient $\pi \in \Irr (G_n)$, $\pi' \in \Irr (G'_m)$ telles que $\Hom (\sigma_k,\pi\otimes \pi') \neq 0$. On a donc un entrelacement non nul de  
$\sharp\!-\!\ind^{G_{n}G'_m}_{\overline{P}_{n-k,k}P'_{m-k,k}}\left( \mu
_{k}\right)$ dans  $\pi \otimes \pi' $. Ceci \'equivaut, par \eqref{frobcas2}, \`a l'existence d'un entrelacement non nul entre
$\mu
_{k}$ et $\delta_{P_{n-k,k}}\sharp\!-\!r_{n-k,k}^{G_n}( \pi) \otimes \delta_{P'_{m-k,k}}\sharp\!-\!\overline{r}_{m-k,k}^{G'_m}( \pi').$ 

Soit $V $ l'image d'un tel entrelacement; il existe une sous-repr\'esentation irr\'eductible $V'$ de $V$ et toute telle sous-repr\'esentation est de la forme $\left( 1_{n-k} \otimes\tau\right)\otimes \left( 1_{m-k} \otimes \tau'\right)$ comme repr\'esentation de $\overline{M}_{n-k,k}M'_{m-k,k}$ o\`u $\tau$ et $\tau'$ sont irr\'eductibles. Par \eqref{frobcas2}, \`a nouveau, on trouve le r\'esultat.
\end{proof}
\begin{corollary}\label{condit}
Soit $\pi \in \Irr (G_n)$. Les conditions suivantes sont \'equivalentes:
\begin{enumerate}
\item La repr\'esentation $\pi$ n'appara\^it pas dans le bord de $\sigma_{n,m}$. 
\item Il n'existe pas $\tau \in \Irr (G_k)$, $k<n$, telle que $$\Hom_{G_n} \left( \sharp\!-\!\ind^{G_{n}}_{\overline{P}_{n-k,k}}\left( 1_{n-k} \otimes \tau\right), \pi \right) \neq 0.$$
\end{enumerate}
\end{corollary}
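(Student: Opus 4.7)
\noindent\emph{Plan de preuve.} Le point crucial est de relier la condition $G_n$-lin\'eaire (1) \`a la structure de $\sigma_k$ comme $G_n\times G'_m$-module, afin de pouvoir invoquer le lemme \ref{primero}. J'\'etablirais d'abord l'\'equivalence suivante: $\Hom_{G_n}(\sigma_k,\pi)\neq 0$ si et seulement s'il existe $\pi'\in\Irr(G'_m)$ avec $\Hom_{G_n\times G'_m}(\sigma_k,\pi\otimes\pi')\neq 0$. L'implication $\Leftarrow$ est imm\'ediate (composer avec une forme lin\'eaire sur $\pi'$); l'implication $\Rightarrow$ demande d'extraire un sous-quotient irr\'eductible $\pi'$ du $G'_m$-module lisse non nul $\Hom_{G_n}(\sigma_k,\pi)$, ce qui rel\`eve d'un argument de finitude (admissibilit\'e ou g\'en\'eration finie de $\sigma_k$ comme $G'_m$-module). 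C'est \`a mon avis l'\'etape la plus d\'elicate du raisonnement.

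\medskip

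Le sens $(2)\Rightarrow(1)$ en r\'esulte \`a la contrapos\'ee: si $\pi$ appara\^it dans le bord, l'\'equivalence pr\'ec\'edente produit $\pi'\in\Irr(G'_m)$ tel que $\Hom(\sigma_k,\pi\otimes\pi')\neq 0$, et le lemme \ref{primero} fournit imm\'ediatement $\tau\in\Irr(G_k)$ avec $\Hom_{G_n}(\sharp\!-\!\ind^{G_n}_{\overline{P}_{n-k,k}}(1_{n-k}\otimes\tau),\pi)\neq 0$, la condition bilin\'eaire se factorisant en ses deux composantes ind\'ependantes.

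\medskip

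Pour le sens $(1)\Rightarrow(2)$, encore par contrapos\'ee, supposons qu'il existe $k<n$ et $\tau\in\Irr(G_k)$ avec $\Hom_{G_n}(\sharp\!-\!\ind^{G_n}_{\overline{P}_{n-k,k}}(1_{n-k}\otimes\tau),\pi)\neq 0$. La th\'eorie des fonctions z\^eta de Godement-Jacquet, d\'ej\`a utilis\'ee pour \eqref{una}, fournit une surjection $G_k\times G'_k$-\'equivariante $\rho_k\twoheadrightarrow\tau\otimes\widetilde{\tau}$. En prolongeant trivialement sur les radicaux unipotents, on en d\'eduit une surjection de $\overline{P}_{n-k,k}\times P'_{m-k,k}$-modules $\mu_k\twoheadrightarrow (1_{n-k}\otimes\tau)\otimes(1_{m-k}\otimes\widetilde{\tau})$, puis, en induisant,
$$\sigma_k\ \twoheadrightarrow\ \sharp\!-\!\ind^{G_n}_{\overline{P}_{n-k,k}}(1_{n-k}\otimes\tau)\ \otimes\ \sharp\!-\!\ind^{G'_m}_{P'_{m-k,k}}(1_{m-k}\otimes\widetilde{\tau}).$$
Le premier facteur admet $\pi$ comme quotient par hypoth\`ese; le second est de longueur finie (induite parabolique d'une repr\'esentation irr\'eductible) et admet donc un quotient irr\'eductible $\pi'$. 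On obtient ainsi $\Hom(\sigma_k,\pi\otimes\pi')\neq 0$, donc $\Hom_{G_n}(\sigma_k,\pi)\neq 0$, et $\pi$ appara\^it dans le bord.
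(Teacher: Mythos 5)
Ta preuve du sens $(1)\Rightarrow(2)$ (par contrapos\'ee) est correcte et suit essentiellement la m\^eme route que le papier : \`a partir de la surjection de Godement--Jacquet $\rho_k\twoheadrightarrow\tau\otimes\widetilde\tau$ (c'est le contenu de \cite[Lemme 3.II.3]{MVW}), on obtient par exactitude de $\sharp\!-\!\ind$ une surjection $\sigma_k\twoheadrightarrow\sharp\!-\!\ind^{G_n}_{\overline P_{n-k,k}}(1_{n-k}\otimes\tau)\otimes\sharp\!-\!\ind^{G'_m}_{P'_{m-k,k}}(1_{m-k}\otimes\widetilde\tau)$, qu'on compose avec l'entrelacement vers $\pi$. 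Le papier ne r\'edige d'ailleurs explicitement que cette direction, et laisse $(2)\Rightarrow(1)$ implicite ; tu fais donc bien de la traiter, mais c'est pr\'ecis\'ement l\`a qu'il y a un probl\`eme.

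Pour $(2)\Rightarrow(1)$, tu veux passer de $\Hom_{G_n}(\sigma_k,\pi)\neq 0$ \`a l'existence de $\pi'\in\Irr(G'_m)$ avec $\Hom_{G_n\times G'_m}(\sigma_k,\pi\otimes\pi')\neq 0$, afin d'invoquer le lemme \ref{primero}. Tu proposes pour cela d'utiliser l'admissibilit\'e ou la g\'en\'eration finie de $\sigma_k$ comme $G'_m$-module. C'est faux : $\sigma_k$, support\'e sur les matrices de rang exactement $k$, n'est ni admissible ni de type fini comme $G'_m$-module (l'action \`a droite de $G'_m$ sur ces matrices a une infinit\'e d'orbites, param\'etr\'ees par une grassmannienne). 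Ce qu'il faudrait, c'est la finitude du quotient $\pi$-isotypique maximal de $\sigma_k$ comme $G'_m$-module --- un argument \`a la Howe--Kudla qui demande un vrai travail, et qui n'est ni \'enonc\'e ni prouv\'e ici. En l'\'etat, ta preuve de $(2)\Rightarrow(1)$ a donc une lacune r\'eelle.

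La fa\c{c}on la plus \'economique de boucher ce trou est en fait de se passer compl\`etement de $\pi'$, en imitant la preuve du lemme \ref{primero} mais uniquement du c\^ot\'e $G_n$. Si $\Hom_{G_n}(\sigma_k,\pi)\neq 0$, la r\'eciprocit\'e \eqref{frobcas2} appliqu\'ee \`a $\sigma_k|_{G_n}=\sharp\!-\!\ind^{G_n}_{\overline P_{n-k,k}}(-)$ donne un entrelacement non nul d'un $\overline M_{n-k,k}$-module de la forme $1_{n-k}\otimes(\ast)$ (puisque $\overline U_{n-k,k}$ et $G_{n-k}$ agissent trivialement via $\mu_k$) vers $\delta_{P_{n-k,k}}\sharp\!-\!r^{G_n}_{n-k,k}(\pi)$. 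L'image est un sous-module non nul de ce module de Jacquet, qui est admissible puisque $\pi$ l'est ; elle contient donc une sous-repr\'esentation irr\'eductible, n\'ecessairement de la forme $1_{n-k}\otimes\tau$ avec $\tau\in\Irr(G_k)$. Par \eqref{frobcas2} \`a nouveau, on obtient $\Hom_{G_n}(\sharp\!-\!\ind^{G_n}_{\overline P_{n-k,k}}(1_{n-k}\otimes\tau),\pi)\neq 0$, ce qui est $\neg(2)$. Cet argument n'utilise que l'admissibilit\'e de $\pi$ et court-circuite toute la discussion sur $\pi'$.
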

\begin{proof}
Pour l'implication directe, supposons qu'il existe $\tau \in \Irr (G_k)$, $k<n$, et un entrelacement non nul de $\sharp\!-\!\ind^{G_{n}}_{\overline{P}_{n-k,k}}\left( 1_{n-k} \otimes \tau\right)$ dans $ \pi$. Par exactitude du foncteur $\sharp\!-\!\ind$ et \cite[Lemme 3.II.3]{MVW}, on a un morphisme surjectif de $\sigma_k$ dans $\sharp\!-\!\ind^{G_{n}}_{\overline{P}_{n-k,k}}\left( 1_{n-k} \otimes \tau\right)$ qui compos\'e avec l'entrelacement pr\'ec\'edent nous montre que $\Hom_{G_n}\left(\sigma_k, \pi \right) \neq 0$, \textit{i.e} que la repr\'esentation $\pi$ appara\^it dans le bord de $\sigma_{n,m}$.
\end{proof}

Rappelons que, d'apr\`es \cite[Th\'eor\`eme 5.1]{Min1}, pour toute repr\'esentation irr\'eductible $\pi \in \Irr (G_n)$, la repr\'esentation $\sharp\!-\!\ind^{G'_m}_{ P'_{m-n,n}}(1_{m-n} \otimes \pi)$ a un unique quotient irr\'eductible et qu'il appara\^it avec multiplicit\'e $1$ dans l'induite. Le th\'eor\`eme suivant r\'esume tout ce que l'on peut conclure \`a partir de cette filtration par le rang. 
\begin{theorem}\label{primerhowe}
Soient $n,m$ des entiers positifs et supposons $n \leq m$. Soient $\pi \in \Irr (G_n)$ qui n'apparaisse pas dans le bord de $\sigma_{n,m}$. Il existe une unique repr\'esentation $\pi' \in Irr(G'_m)$ telle que $$\Hom_{G_n\times G'_m}\left( \sigma_{n,m}, \pi\otimes\pi'\right)\neq0.$$ C'est l'unique quotient irr\'eductible de $\sharp\!-\!\ind^{G'_m}_{ P'_{m-n,n}}(1_{m-n} \otimes \widetilde{\pi}).$
De plus, $$\dim \left(\Hom_{G_n\times G'_m}\left( \sigma_{n,m}, \pi\otimes\pi'\right) \right)=1.$$
\end{theorem}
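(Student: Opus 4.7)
L'id\'ee est d'isoler le quotient ouvert $\sigma_n$ de la filtration par le rang gr\^ace \`a l'hypoth\`ese d'absence de $\pi$ dans le bord, puis de calculer directement $\Hom(\sigma_n, \pi \otimes \pi')$ via la r\'eciprocit\'e de Frobenius et la structure de la repr\'esentation r\'eguli\`ere $S(G_n)$.

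D'abord, je montrerai que $\Hom_{G_n \times G'_m}(\sigma_k, \pi \otimes \pi') = 0$ pour tout $k < n$: dans le cas contraire, le lemme \ref{primero} produirait $\tau \in \Irr(G_k)$ avec $\Hom_{G_n}(\sharp\!-\!\ind^{G_n}_{\overline{P}_{n-k,k}}(1_{n-k} \otimes \tau), \pi) \neq 0$, ce qui contredirait, par le corollaire \ref{condit}, l'hypoth\`ese sur $\pi$. Appliquant alors $\Hom(-, \pi \otimes \pi')$ aux suites exactes $0 \to S_{k+1} \to S_k \to \sigma_k \to 0$ et proc\'edant par r\'ecurrence sur $k$, on obtient l'injection
$$\Hom_{G_n \times G'_m}(\sigma_{n,m}, \pi \otimes \pi') \hookrightarrow \Hom_{G_n \times G'_m}(\sigma_n, \pi \otimes \pi'),$$
puisque $S_n = \sigma_n$ (car $S_{n+1}=0$ lorsque $n \leq m$).

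Ensuite, je calculerai le membre de droite. Gr\^ace \`a la description $\sigma_n \simeq \sharp\!-\!\ind^{G_n G'_m}_{G_n P'_{m-n,n}}(\mu_n)$ et \`a la r\'eciprocit\'e \eqref{frobcas2}, ce Hom s'identifie \`a $\Hom_{G_n \times M'_{m-n,n}}(\mu_n, \pi \otimes \delta_{P'_{m-n,n}} \sharp\!-\!\overline{r}_{m-n,n}^{G'_m}(\pi'))$. Comme $G'_{m-n}$ agit trivialement sur $\mu_n \simeq S(G_n) \boxtimes 1_{m-n}$, l'image de toute telle fl\`eche est contenue dans le sous-espace $W$ des $G'_{m-n}$-invariants du module de Jacquet tordu. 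En exploitant alors l'identification $\Hom_{G_n}(S(G_n), \pi \otimes W) \simeq \pi \otimes W$ (qui provient de $S(G_n) \simeq \sharp\!-\!\ind^{G_n}_{\{1\}}\mathbb{C}$), et en analysant l'action r\'esiduelle de $G'_n$, on arrive \`a
$$\Hom_{G_n \times G'_m}(\sigma_n, \pi \otimes \pi') \simeq \Hom_{G'_n}(\widetilde{\pi}, W) \simeq \Hom_{G'_m}\bigl(\sharp\!-\!\ind^{G'_m}_{P'_{m-n,n}}(1_{m-n} \otimes \widetilde{\pi}),\, \pi'\bigr),$$
la derni\`ere \'egalit\'e r\'esultant, \`a son tour, de \eqref{frobcas2}.

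Par \cite[Th\'eor\`eme 5.1]{Min1}, l'induite $\sharp\!-\!\ind^{G'_m}_{P'_{m-n,n}}(1_{m-n} \otimes \widetilde{\pi})$ poss\`ede un unique quotient irr\'eductible, de multiplicit\'e $1$. Par cons\'equent, $\Hom_{G_n \times G'_m}(\sigma_n, \pi \otimes \pi')$ est de dimension au plus $1$, et s'annule sauf si $\pi'$ est ce quotient. L'existence d\'ej\`a \'etablie, \`a partir de la th\'eorie de Godement-Jacquet (cf. \eqref{una}), d'un $\pi'$ avec $\Hom(\sigma_{n,m}, \pi \otimes \pi') \neq 0$ force alors $\pi'$ \`a \^etre ce quotient, et la dimension \`a valoir exactement $1$. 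Le point d\'elicat sera la deuxi\`eme \'etape: il faudra suivre avec soin les diff\'erents caract\`eres modules $\delta_{P'}$ et les versions normalis\'ees ou non des foncteurs de Jacquet, pour v\'erifier que l'extraction des $G'_{m-n}$-invariants dans $\delta_{P'_{m-n,n}} \sharp\!-\!\overline{r}(\pi')$ produise bien l'adjoint de $\sharp\!-\!\ind^{G'_m}_{P'_{m-n,n}}(1_{m-n} \otimes \,\cdot\,)$ \'evalu\'e en $\widetilde{\pi}$.
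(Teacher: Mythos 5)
Your proposal is correct and follows essentially the same route as the paper: you use the absence of $\pi$ in the boundary to reduce $\Hom(\sigma_{n,m},\pi\otimes\pi')$ to $\Hom(\sigma_n,\pi\otimes\pi')$, then peel this open via \eqref{frobcas2} and the Schur-type property of $\rho_n$ from \cite[Lemme 3.II.3]{MVW}, arriving at $\Hom_{G'_m}\bigl(\sharp\!-\!\ind^{G'_m}_{P'_{m-n,n}}(1_{m-n}\otimes\widetilde{\pi}),\pi'\bigr)$, which \cite[Th\'eor\`eme 5.1]{Min1} controls. The only cosmetic differences are that you package the reduction as a single injection $\Hom(\sigma_{n,m},\cdot)\hookrightarrow\Hom(\sigma_n,\cdot)$ obtained inductively from the filtration (rather than, as in the paper, taking the maximal $k$ with $\lambda|_{S_k}\neq 0$ and arguing separately for uniqueness and multiplicity one), and that you obtain the dimension bound from the multiplicity-one statement in \cite[Th\'eor\`eme 5.1]{Min1} rather than directly from \cite[Lemme 3.II.3]{MVW}; both are legitimate and yield the same conclusion.
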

\begin{proof}
On avait vu au d\'ebut de la section qu'il existe $\pi' \in Irr(G'_m)$ telle que $\Hom_{G_n\times G'_m}\left( \sigma_{n,m}, \pi\otimes\pi'\right)\neq0$. Montrons qu'elle est unique.

Par composition avec les morphismes $S_j \rightarrow \sigma_{n,m}$, $j=0, \dots , n$, on obtient des morphismes $S_j \rightarrow \pi \otimes \pi'$.

Soit $k$ le plus grand $j$ tel que $S_j \rightarrow \pi \otimes \pi'$ ne soit pas le morphisme nul. On a donc que $\pi \otimes \pi'$ est un quotient de $\sigma_k$. Par hypoth\`ese on a $k=n$ et donc, 
$$\Hom \left( \sigma_n , \pi \otimes \pi' \right) \neq 0.$$
Par d\'efinition de $\sigma_n$ on a alors
$$\Hom \left( \sharp\!-\!\ind ^{G_n \times G'_m}_{G_{n} \times P'_{m-n,n}}\left( \mu_{n}\right)  , \pi \otimes \pi' \right) \neq 0.$$
Par \eqref{frobcas2} et la d\'efinition de $\mu_n$, on d\'eduit que
$$\Hom \left(1_{m-n} \otimes \rho_{n}  , \pi \otimes \delta_{P_{m-n,n}}\sharp\!-\!r_{m-n,n}^{G_m}(\pi') \right) \neq 0,$$
d'o\`u, par \cite[Lemme 3.II.3]{MVW},
$$\Hom \left(1_{m-n} \otimes \widetilde{\pi}  , \delta_{P_{m-n,n}}\sharp\!-\!r_{m-n,n}^{G_m}(\pi') \right) \neq 0,$$
et, \`a nouveau par \eqref{frobcas2}, on d\'eduit que $\pi'$ est l'unique quotient irr\'eductible de $\sharp\!-\!\ind^{G'_m}_{ P'_{m-n,n}}( 1_{m-n} \otimes \widetilde{\pi}).$

Montrons finalement que $$\dim \left( \Hom_{G_n\times G'_m}\left( \sigma_{n,m}, \pi\otimes\pi' \right) \right)=1.$$
Soit $\lambda \in \Hom_{G_n\times G'_m}\left( \sigma_{n,m}, \pi\otimes\pi' \right) $. La compos\'ee de $\lambda$ avec l'inclusion $\sigma_n \hookrightarrow \sigma _{n,m}$ n'est pas nulle. Or, par le lemme \cite[Lemme 3.II.3]{MVW}, $\dim \left( \Hom_{G_n\times G'_m}\left( \sigma_{n}, \pi\otimes\pi' \right) \right)=1,$ et donc cette compos\'ee est unique \`a homoth\'ethie pr\`es. Ainsi, 
si $$\dim \left( \Hom_{G_n\times G'_m}\left( \sigma_{n,m}, \pi\otimes\pi' \right) \right)>1,$$ on pourrait construire, par combinaison lin\'eaire, un morphisme non nul $\lambda \in \Hom_{G_n\times G'_m}\left( \sigma_{n,m}, \pi\otimes\pi'\right)$ tel que sa compos\'ee avec  l'inclusion $\sigma_n \hookrightarrow \sigma _{n,m}$ soit nulle. Il existe alors $k<n$ tel que $\Hom_{G_n} \left( \sigma_k, \pi \right) \neq 0,$ ce qui est absurde, par hypoth\`ese, et qui ach\`eve la d\'emonstration.
\end{proof}

\begin{remark}
En particulier, si $\pi$ est une repr\'esentation cuspidale, il existe une unique $\pi'$ (l'unique quotient irr\'eductible de $\sharp\!-\!\ind^{G'_m}_{ P'_{m-n,n}}(1_{m-n} \otimes \widetilde{\pi})$), telle que
$$\Hom_{G_n\times G'_m}\left( \sigma_{n,m}, \pi\otimes\pi'\right)\neq0$$
En effet, si $\pi$ est cuspidale, il n'existe pas $\tau \in \Irr (G_k)$, $k<n$, telle que $$\Hom_{G_n} \left( \sharp\!-\!\ind^{G_{n}}_{\overline{P}_{n-k;k}}\left( 1_{n-k} \otimes \tau\right), \pi \right) \neq 0,$$
et donc, par le corollaire \ref{condit}, elle n'appara\^it pas dans le bord de $\sigma_{n,m}$.
\end{remark}

\section{Suite de Kudla}\label{filtraciones}
Dans cette section, on refait les calculs de \cite{kudla}, pour les paires duales de type II. On veut calculer une suite de composition des foncteurs de Jacquet de la repr\'esentation $\sigma_{n,m}$.

Soient $t,i$ des entiers, $0<t \leq n$, $0 \leq i \leq \inf \left\{t,m \right\}$. Fixons quelques notations pour cette section:

\begin{itemize}
\item On notera chaque matrice $\mathbf{m} \in \mathcal{M}_{n,m}$ par 
\begin{equation*}
\mathbf{m}=\left( 
\begin{array}{l}
a \\ b

\end{array}
\right) =
\left( 
\begin{array}{c}
a \\
\begin{array}{ll}
m_{1} &
m_{2}
\end{array}

\end{array}
\right) ,
\end{equation*}
o\`{u} $a \in \mathcal{M}_{t,m},b\in \mathcal{M}_{n-t,m}, m_{1}\in \mathcal{M}_{n-t,i},\ m_{2}\in \mathcal{M}_{n-t,m-i}, .$

\item On notera chaque $g\in M_{(t,n-t)}$ par 
\begin{equation*}
g=\left( 
\begin{array}{ll}
g_{0} & \\ 
 & g_{1}
\end{array}
\right)
=\left( 
\begin{array}{ll}
 
\begin{array}{ll}
g_{2} & g_{3} \\ 
g_{4} & g_{5}
\end{array}
 
 & \\ 
 & g_{1}
\end{array}
\right), 
\end{equation*}
$g_{0}\in G_{t}, \, g_{1}\in G_{n-t},\ g_{2}\in \mathcal{M}_{t-i,t-i},\, g_{3}\in \mathcal{M}_{t-i,i},\, g_{4}\in \mathcal{M}_{i,t-i},\, g_{5}\in \mathcal{M}_{i,i}.$

\item On notera chaque $g^{\prime }\in G'_{m}$ par 
\begin{equation*}
g^{\prime }=\left( 
\begin{array}{ll}
g_{1}^{\prime } & g_{2}^{\prime } \\ 
g_{3}^{\prime } & g_4^{ \prime }
\end{array}
\right) ,
\end{equation*}

o\`{u} $ g_{1}^{\prime } \in \mathcal{M}_{i,i}, \,g_{2}^{\prime }\in \mathcal{M}_{m-i,i}, \,g_{3}^{\prime }\in \mathcal{M}_{i,m-i},\,g_{4}^{\prime }\in \mathcal{M}_{m-i,m-i}.$
\end{itemize}

Soit $\psi$ un caract\`ere non trivial de $F$.

On d\'efinit $\sigma_{n,m} ^{\ast }$ par le diagramme commutatif suivant: 
\begin{equation}\label{fourier}
\xymatrix { S_{n,m} \ar[r]^{\sigma_{n,m} \left(
g,g^{\prime }\right)} \ar[d]_{\widehat{}}& S_{n,m} \ar[d]_{\widehat{}}\\ S_{n,m} \ar[r]^{\sigma_{n,m}^{*} \left( g,g^{\prime }\right)} &
S_{n,m}},
\end{equation}
o\`u $\,\widehat{}\,$ est l'isomorphisme de repr\'esentations qui envoie $f\in S_{n,m} $ vers
\begin{equation*}
\widehat{f}
\left( 
\begin{array}{l}
a \\ b
\end{array}
\right)=\int_{\mathcal{M}_{t,m}}f\left( 
\begin{array}{l}
a^{\ast} \\ b

\end{array}
\right)\psi \circ \trd\left( ^{t}a a^{\ast } \right) da^{\ast},
\end{equation*}
o\`u on a not\'e $\trd$ la trace r\'eduite.

On se propose d'\'{e}tudier la repr\'{e}sentation $\sigma_{n,m} ^{\ast }$
(transform\'{e}e de Fourier partielle de $\sigma_{n,m} $, isomorphe \`a $\sigma_{n,m}$ par \eqref{fourier}). Plus tard, on ne fera pas de distinction entre les deux repr\'esentations.

La repr\'esentation $\sigma_{n,m} ^{\ast }$ agit sur $\widehat{f}$ par 
\begin{equation}\label{action}
\sigma_{n,m} ^{\ast }\left( g,g^{\prime }\right) \widehat{f}\left( 
\begin{array}{l}
a \\ b
\end{array}
\right) =\int_{\mathcal{M}_{n,t}}f\left( g^{-1}\left( 
\begin{array}{l}
a^{\ast} \\ b
\end{array}
\right) g^{\prime }\right) \psi \circ \trd\left( ^{t}aa^{\ast }\right)
da^{\ast }.
\end{equation}
Ainsi $U_{t,n-t}$ agit, via $\sigma_{n,m} ^{\ast }$, par 
\begin{eqnarray}\label{U=0}
\sigma_{n,m} ^{\ast }\left( \begin{array}{ll}
1 & u \\
& 1
\end{array}\right)\widehat{f} \left( 
\begin{array}{l}
a \\ b
\end{array}
\right) &=&\psi \circ \trd\left( ^{t}aub\right)\widehat{f} \left( 
\begin{array}{l}
a \\ b
\end{array}
\right) \notag\\
&=& \psi \circ \trd\left( b^{t}au\right)\widehat{f} \left( 
\begin{array}{l}
a \\ b
\end{array}
\right)
\end{eqnarray}

Notons $A$ la partie ferm\'ee de $\mathcal{M}_{n,m}$:
$$A= \left\{ \left( 
\begin{array}{l}
a \\ b
\end{array}
\right) \in \mathcal{M}_{n,m}: b^{t}a=0 \right\} .$$
\begin{lemma}
Le sous-espace $S_{n,m}\left(U_{t,n-t}, \sigma_{n,m} ^{\ast }\right)$ de $S_{n,m}$ engendr\'e par les fonctions de la forme
$$\widehat{f}-\sigma_{n,m} ^{\ast }(u)\widehat{f},$$
o\`u $f \in S_{n,m}$ et $u \in U_{t,n-t}$ est l'espace des $\widehat{f}\in S_{n,m}$ telles que
\begin{equation*}
\supp\widehat{f} \cap A =\emptyset ,
\end{equation*}
\end{lemma}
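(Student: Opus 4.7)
The plan is to recognise the statement as the standard computation of the coinvariants of a smooth representation of an abelian locally profinite group acting ``fibrewise'' by a character, and then to localize on the support.

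First I would read the action \eqref{U=0} as a pointwise multiplication operator. The abelian group $U_{t,n-t}$ is canonically identified with $\mathcal{M}_{t,n-t}$, and (\ref{U=0}) shows that $u\in U_{t,n-t}$ acts on $\widehat{f}$ by multiplication by the function
$$\left(\begin{array}{l} a \\ b\end{array}\right)\longmapsto \psi\circ\trd\!\left(b\,{}^t\!a\,u\right),$$
i.e.\ at the point $(a,b)$ by the value in $u$ of the additive character $\chi_{a,b}:u\mapsto \psi\circ\trd(b\,{}^t\!a\,u)$ of $U_{t,n-t}$. Because the pairing $(M,u)\mapsto \psi\circ\trd(Mu)$ on $\mathcal{M}_{n-t,t}\times\mathcal{M}_{t,n-t}$ is non-degenerate and $\psi$ is non-trivial, $\chi_{a,b}$ is trivial if and only if $b\,{}^t\!a=0$, i.e.\ if and only if $(a,b)\in A$.

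The inclusion $S_{n,m}(U_{t,n-t},\sigma_{n,m}^\ast)\subset\{\widehat{f}:\supp\widehat{f}\cap A=\emptyset\}$ is then immediate: for every $(a,b)\in A$ and every $u\in U_{t,n-t}$, $\chi_{a,b}(u)=1$, so $\bigl(\sigma_{n,m}^\ast(u)\widehat{g}\bigr)(a,b)=\widehat{g}(a,b)$, hence any generator $\widehat{g}-\sigma_{n,m}^\ast(u)\widehat{g}$ vanishes on $A$, and so does any finite linear combination.

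For the converse I would invoke the standard fact that in a smooth representation $(\pi,V)$ of a locally profinite abelian group $H$, one has
$$V(H,\pi)=\bigl\{v\in V:\ \textstyle\int_{H_0}\pi(h)v\,dh=0\text{ for some compact open }H_0\subset H\bigr\},$$
proved by writing the integral as a finite sum over $H_0/\stab(v)$ and using that the characteristic of the field is zero. Given $\widehat{f}$ with $\supp\widehat{f}$ compact and disjoint from $A$, the image
$$K=\bigl\{b\,{}^t\!a\ :\ (a,b)\in\supp\widehat{f}\bigr\}\subset\mathcal{M}_{n-t,t}\setminus\{0\}$$
is compact and avoids $0$. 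Using the continuity of the pairing together with the fact that the neighbourhoods $\{\chi\in\widehat{U_{t,n-t}}:\chi|_{U_0}=1\}$ of the trivial character shrink to $\{1\}$ as $U_0$ shrinks, one finds a single compact open subgroup $U_0\subset U_{t,n-t}$ on which $\chi_{a,b}$ is non-trivial for every $(a,b)\in\supp\widehat{f}$. For such a $U_0$, the integral $\int_{U_0}\chi_{a,b}(u)\,du$ vanishes identically on $\supp\widehat{f}$, whence
$$\int_{U_0}\sigma_{n,m}^\ast(u)\widehat{f}\,du=0,$$
which by the characterization above places $\widehat{f}$ in $S_{n,m}(U_{t,n-t},\sigma_{n,m}^\ast)$.

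The one genuinely delicate step is the uniform choice of $U_0$ for the whole compact set $K$; all the rest is a bookkeeping exercise comparing the two descriptions of the coinvariants. I would expect that obstacle to dissolve by an explicit argument writing $\widehat{f}$ as a finite sum of characteristic functions of small enough compact open sets on which both $\widehat{f}$ is constant and the entries of $b\,{}^t\!a$ lie in a fixed non-zero coset of a sufficiently small lattice in $\mathcal{M}_{n-t,t}$, reducing the uniformity to the single-point case.
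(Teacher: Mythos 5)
Your proof is correct in substance but takes a genuinely different route from the paper's. The paper proves the reverse inclusion hands-on: for $\widehat{f}$ vanishing on $A$, it covers $\supp\widehat{f}$ by finitely many small compact open sets $I$ on which the character $\mathbf{m}'\mapsto\alpha_{\mathbf{m}'}(u_{\mathbf{m}})$ is constant and $\neq 1$, and exhibits each $\mathbf{1}_I$ explicitly as $\widehat{F}-\sigma_{n,m}^\ast(u_{\mathbf{m}})\widehat{F}$ with $\widehat{F}=\tfrac{1}{1-\alpha_{\mathbf{m}}(u_{\mathbf{m}})}\mathbf{1}_I$. You instead invoke the Jacquet--Langlands characterisation of coinvariants, $V(H,\pi)=\{v:\int_{H_0}\pi(h)v\,dh=0\text{ for some compact open }H_0\}$, and reduce to producing a single $U_0$ on which every character $\chi_{a,b}$, $(a,b)\in\supp\widehat{f}$, is non-trivial. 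That is a cleaner, more conceptual argument and it does work; note in particular that the ``delicate step'' you flag at the end dissolves without any further decomposition into characteristic functions, because $K=\{b\,{}^t\!a:(a,b)\in\supp\widehat{f}\}$ is a compact subset of $\mathcal{M}_{n-t,t}$ avoiding $0$, and the sets $\{M:\psi\circ\trd(Mu)=1\ \forall u\in U_0\}$ are the dual lattices, which form a neighbourhood basis of $0$.

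One genuine slip: you write that the sets $\{\chi:\chi|_{U_0}=1\}$ ``shrink to $\{1\}$ as $U_0$ shrinks''. The monotonicity is the other way around: shrinking $U_0$ weakens the condition $\chi|_{U_0}=1$, so these sets grow; it is as $U_0$ grows (exhausts $U_{t,n-t}$) that the dual lattices shrink to $\{1\}$. Fortunately that is also what you need, since the integral criterion is stable under enlarging $U_0$: you want $U_0$ large enough that its dual lattice misses $K$. Also, the relevant ``characteristic zero'' is that of the coefficient field $\mathbb{C}$ (so that $p$ is invertible and the averaging projector exists), not that of $F$; worth stating precisely.
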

\begin{proof}
Sur $A$, $\widehat{f}-\sigma_{n,m} ^{\ast }(u)\widehat{f}$ est, par \eqref{U=0}, nul. R\'eciproquement, soit $\widehat{f}\neq0$ nulle sur $A$ et montrons que $\widehat{f} \in S_{n,m}\left(U_{t,n-t}, \sigma_{n,m} ^{\ast }\right)$. Le lemme sera d\'emontr\'e. 

Pour tout $\mathbf{m}= \left(  \begin{array}{l}
a \\ b
\end{array}
\right)\in \mathcal{M}_{n,m}$, notons $\alpha_\mathbf{m}$ le caract\`ere d\'efini par
\begin{eqnarray*}
\alpha_\mathbf{m} &:& U_{t,n-t} \rightarrow \mathbb{C} \\
&&u \mapsto \psi \circ \trd\left( ^{t}aub\right).
\end{eqnarray*}

Pour tout $\mathbf{m}\in \mathcal{M}_{n,m} \backslash A$, il existe $u_\mathbf{m}\in U_{t,n-t}$ tel que $\alpha_\mathbf{m}(u_\mathbf{m})\neq 1$. Pour $\mathbf{m}'$ dans un voisinage $I$ de $\mathbf{m}$, on a par continuit\'e $\alpha_{\mathbf{m}'}(u_\mathbf{m})=\alpha_\mathbf{m}(u_\mathbf{m}) \neq 1$. Le support de $\widehat{f}$, \'etant compact, il suffit de montrer que la fonction caract\'eristique $\mathbf{1}_I$ du voisinage $I$ est de la forme $\widehat{F}-\sigma_{n,m} ^{\ast }(u)\widehat{F}$, pour $F \in S_{n,m}$.

Puisque, $\alpha_{\mathbf{m}'}(u_\mathbf{m})=\alpha_\mathbf{m}(u_\mathbf{m}) \neq 1$ on a que $\mathbf{1}_I$ est \'egal \`a $\sigma_{n,m} ^{\ast }(u)\mathbf{1}_I$ \`a une constante non nulle pr\`es. Ainsi, $\widehat{F}=\frac{1}{1-\alpha_\mathbf{m}(u_\mathbf{m}) }\mathbf{1}_I$ convient.


\end{proof}
Ainsi la suite exacte courte (\textit{cf.} \cite[\textsection 1.1.8]{BZ1}) 
\begin{eqnarray*}
0 \rightarrow S\left(\mathcal{M} \backslash A\right) \rightarrow S_{n,m} &\rightarrow &S(A) \rightarrow 0 \\
\widehat{f} & \mapsto & \widehat{f} |_A
\end{eqnarray*}
s'identifie \`a
$$0 \rightarrow S_{n,m}\left(U_{t,n-t}, \sigma_{n,m} ^{\ast }\right) \rightarrow S_{n,m} \rightarrow S(A) \rightarrow 0$$
et donc $S(A)$, muni de l'action de $M_{(t,n-t)}$ donn\'ee par \eqref{action}, s'identifie au foncteur de Jacquet non normalis\'e, d'o\`u un isomorphisme de $M_{(t,n-t)}$-modules:
$$S(A) \simeq \sharp\!-\! r_{t,n-t}^{G_n}\left( \sigma_{n,m} \right) .$$

On a alors une filtration de $\sharp\!-\! r_{t,n-t}^{G_n}\left( \sigma_{n,m} \right) $ de $M_{(t,n-t)}\times G'_{m}$-modules:
\begin{equation*}
0=S_{k+1}\subset S_{k}\subset \dots\subset S_{1} \subset S_{0}=\sharp\!-\! r_{t,n-t}^{G_n}\left( \sigma_{n,m} \right) ,
\end{equation*}
o\`{u} 
\begin{eqnarray*}
S_{i} &=&\left\{  \widehat{f} \in S(A) : \widehat{f}\left( 
\begin{array}{l}
a \\ b
\end{array}
\right)=0 \text{ si } \rang\left( a\right)
\leq i-1\right\} \\
k &=&\inf \left( t,m\right) .
\end{eqnarray*}
Chaque $S_{i+1}$ est ouvert dans $S_{i}$.
La repr\'esentation $\sharp\!-\! r_{t,n-t}^{G_n}\left( \sigma_{n,m} \right)$ est donc compos\'ee des repr\'esentations
\begin{eqnarray*}
\tau^\ast _{i} &=&S_{i} / S_{i+1} \qquad  0\leq i\leq k.
\end{eqnarray*}

La suite exacte de $M_{(t,n-t)}\times G'_{m}$-modules (\textit{cf.} \cite[\textsection 1.1.8]{BZ1}) 
$$0 \rightarrow S_{i}  \rightarrow S_{i+1} \rightarrow S(A_i) \rightarrow 0$$
o\`u $A_i=\left\{ \left( 
\begin{array}{l}
a \\ b
\end{array}
\right) \in \mathcal{M}_{n,m}: b^{t}a=0 \text{ et } \rang (a)=i \right\} ,$
nous montre que l'espace de $\tau^{\ast} _{i} $ est l'espace des fonctions $\left\{ \widehat{f} \in S(A_i) \right\}$ et $M_{(t,n-t)}\times G'_{m}$ agit, via $\tau^{\ast} _{i} $, sur cet espace par \eqref{action}.

Maintenant on va appliquer le lemme \ref{alfin}, avec $X=A_i$ et $X'$ l'ensemble des matrices de la forme $\left( 
\begin{array}{c}
a_i \\
\begin{array}{ll}
0 &  \ast
\end{array}
\end{array}
\right)
$, avec $\ast \in \mathcal{M}_{n-t,m-i}$. Le stabilisateur $T_i$ dans $M_{(t,n-t)}\times G'_{m}$ de $X'$ est le sous-groupe des $(g,g') \in P_{t-i,i}\times G_{n-t}\times P'_{i,m-i}$ tels que $g_5^{-1}g'_1=1$. Clairement $M_{(t,n-t)}\times G'_{m} \cdot X'=A_i$. Voyons que l'action de $T_i$ dans $S(X')$ est isomorphe \`a
 $\xi^0
_{t,i}\otimes\sigma _{n-t,m-i}$ o\`u $\xi^0 _{t,i}$
est le caract\`{e}re de $T_i$ d\'efini par $\nu \left( g_0\right) ^m \nu \left( g^{ \prime }\right) ^{-t} $, pour $(g,g') \in T_i.$ 

Soient $(g,g') \in T_i$, $x\in  S_{n-t,m-i}$, et $ \widehat{f} \in S(A_i)$. On a que 

$\tau^{\ast}_i \left( g,g^{\prime }\right)  \widehat{f} \left( 
\begin{array}{c}
a_i \\
\begin{array}{ll}
0 &  x
\end{array}
\end{array}
\right)
 =$

\begin{eqnarray*}
&=&\int_{\mathcal{M}_{t,m}}f\left( g^{-1}
\left( 
\begin{array}{c}
a^{\ast} \\
\begin{array}{ll}
0 &
x
\end{array}

\end{array}
\right) 
g^{\prime }\right) \psi \circ \trd\left( \left( 
\begin{array}{ll}
0 & 1_i \\ 
0 & 0
\end{array}
\right) a^{\ast }\right) da^{\ast }
\\
&=&\hspace{-0.2cm}\int_{\mathcal{M}_{t,m}}\hspace{-0.3cm}f\left( 
\begin{array}{c}
g_0^{-1}a^{\ast} g' \\
 \begin{array}{cc}
0 & g_1^{-1}x g'_3
\end{array}
\end{array}
\right) \psi \circ \trd\left( \left( 
\begin{array}{ll}
0 & 1_i \\ 
0 & 0
\end{array}
\right) a^{\ast }\right) da^{\ast }
\\
&=&\hspace{-0.2cm}\nu \left( g_0\right) ^m \nu \left( g^{ \prime }\right) ^{-t} \hspace{-0.2cm}\int_{\mathcal{M}_{t,m}}\hspace{-0.3cm}f\left( \hspace{-0.1cm}
\begin{array}{c}
a^{\ast} \\
 \begin{array}{cc}
0 & g_1^{-1}x g'_3
\end{array}
\end{array}\hspace{-0.1cm}
\right) \psi \circ \trd\left( \left( 
\begin{array}{ll}
0 & \left( g_{5}^{-1} g'_1 \right)^{-1}\\ 
0 & 0
\end{array}
\hspace{-0.2cm}\right) a^{\ast }\hspace{-0.1cm}\right) da^{\ast }
\\
&=&\hspace{-0.2cm}\nu \left( g_0\right) ^m \nu \left( g^{ \prime }\right) ^{-t} \hspace{-0.2cm}\int_{\mathcal{M}_{t,m}}\hspace{-0.3cm}f\left( \hspace{-0.1cm}
\begin{array}{c}
a^{\ast} \\
 \begin{array}{cc}
0 & g_1^{-1}x g'_3
\end{array}
\end{array}\hspace{-0.1cm}
\right) \psi \circ \trd\left( \left( 
\begin{array}{ll}
0 & 1_i \\ 
0 & 0
\end{array}
\hspace{-0.2cm}\right) a^{\ast }\hspace{-0.1cm}\right) da^{\ast }
\\
&=&\hspace{-0.2cm}\nu \left( g_0\right) ^m \nu \left( g^{ \prime }\right) ^{-t} \sigma_{n-t,m-i}(g_1,g'_3)   \widehat{f} |  _ {\left( 
\begin{array}{c}
a_i \\
\begin{array}{ll}
0 &  \ast
\end{array}

\end{array}
\right)
}\left(x\right).
\end{eqnarray*} 

\begin{proposition}\label{lema1} La repr\'esentation $r_{t,n-t}^{G_n}\left( \sigma_{n,m} \right)$ est compos\'ee des repr\'esentations $\tau_i$, $i=0,\dots, \min \left\{ t,m \right\}$, o\`u  
\begin{equation*}
\tau _{i} \simeq \ind_{P_{t-i,i}\times G_{n-t}\times P'_{i,m-i}}
^{M_{(t,n-t)}\times G'_{m}}\left( \xi
_{t,i}\otimes \rho _{i}\otimes \sigma _{n-t,m-i}\right) ,
\end{equation*}
et o\`u $\xi _{t,i}$
est le caract\`{e}re

$$\xi _{t,i} = \begin{cases} 
\nu ^{\frac{2m-n+t-i}{2}}& \text{sur }G_{t-i} \\
 \nu^{\frac{2m-n+2t-i}{2}}& \text{sur } G_{i} \\
 \nu ^{\frac{t}{2}}& \text{sur } G_{n-t}\\
\nu ^{\frac{-m-2t+i}{2}}& \text{sur } G'_{i} \\
 \nu ^{\frac{-2t+i}{2}}& \text{sur } G'_{m-i}.
  \end{cases}$$
\end{proposition}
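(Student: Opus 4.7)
My plan is to finish the argument initiated in the excerpt. The computation just before the statement, combined with Lemma \ref{alfin}, already identifies each subquotient $\tau^\ast_i$ of the unnormalized Jacquet module $\sharp\!-\!r_{t,n-t}^{G_n}(\sigma_{n,m})$ with $\sharp\!-\!\ind_{T_i}^{M_{(t,n-t)} \times G'_m}\bigl(\xi^0_{t,i} \otimes \sigma_{n-t,m-i}\bigr)$, where $T_i \subset P_{t-i,i} \times G_{n-t} \times P'_{i,m-i}$ is defined by $g_5 = g'_1$ and $\xi^0_{t,i}(g,g') = \nu(g_0)^m \nu(g')^{-t}$. Two things remain: to rewrite this in the shape of the proposition, and to pass to normalized functors.

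For the rewriting, I would apply transitivity of $\sharp\!-\!\ind$ through the intermediate group $P_{t-i,i} \times G_{n-t} \times P'_{i,m-i}$. The key point is that $T_i$ meets the Levi $G_i \times G'_i$ of $P_{t-i,i} \times P'_{i,m-i}$ in exactly the diagonal copy of $G_i$, and that $\sharp\!-\!\ind^{G_i \times G'_i}_{\Delta(G_i)}$ of the trivial representation is isomorphic to $\rho_i$ via $\Phi \mapsto [(g,g') \mapsto \Phi(g^{-1}g')]$. Extending $\sigma_{n-t,m-i}$ trivially on the unipotent radicals and using that $\xi^0_{t,i}$ already extends to a character of the full parabolic, induction by stages yields
\[
\tau^\ast_i \simeq \sharp\!-\!\ind^{M_{(t,n-t)} \times G'_m}_{P_{t-i,i} \times G_{n-t} \times P'_{i,m-i}}\bigl( \xi^0_{t,i} \otimes \rho_i \otimes \sigma_{n-t,m-i} \bigr).
\]

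For the normalization, one uses $r_{t,n-t}^{G_n} = \delta_{P_{t,n-t}}^{-1/2} \sharp\!-\!r_{t,n-t}^{G_n}$ on the Jacquet side and $\sharp\!-\!\ind^G_P(-) = \ind^G_P(\delta_P^{-1/2} \otimes -)$ on the induction side. Combining these, $\tau_i$ becomes the normalized induction of $\chi \otimes \rho_i \otimes \sigma_{n-t,m-i}$ from $P_{t-i,i} \times G_{n-t} \times P'_{i,m-i}$ to $M_{(t,n-t)} \times G'_m$, where $\chi$ is the character $\delta_{P_{t,n-t}}^{-1/2} \cdot \delta_P^{-1/2} \cdot \xi^0_{t,i}$ of the Levi $G_{t-i} \times G_i \times G_{n-t} \times G'_i \times G'_{m-i}$, with $P = P_{t-i,i} \times G_{n-t} \times P'_{i,m-i}$.

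The main obstacle is the arithmetic bookkeeping of this last step: one must restrict each modulus character to each of the five Levi factors, combine with the formula $\xi^0_{t,i} = \nu(g_0)^m \nu(g')^{-t}$, and check that $\chi$ agrees piecewise with $\xi_{t,i}$. This is routine but the only step where sign or coefficient errors in the $\nu$-exponents are likely.
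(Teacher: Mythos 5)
Your argument is correct and matches the paper's proof: the paper likewise applies Lemma \ref{alfin} to get $\tau^\ast_i \simeq \sharp\!-\!\ind_{T_i}^{M_{(t,n-t)}\times G'_m}(\xi^0_{t,i}\otimes\sigma_{n-t,m-i})$, then induces in stages through $P_{t-i,i}\times G_{n-t}\times P'_{i,m-i}$ to produce $\xi^0_{t,i}\otimes\rho_i\otimes\sigma_{n-t,m-i}$ there, and finally converts to normalized functors writing $\xi_{t,i}=\nu(g_0)^m\nu(g')^{-t}\,\delta_{P_{t-i,i}}^{-1/2}\delta_{P_{t,n-t}}^{-1/2}\delta_{P'_{i,m-i}}^{-1/2}$, which is exactly your $\chi$ since $\delta_P=\delta_{P_{t-i,i}}\delta_{P'_{i,m-i}}$. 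You are somewhat more explicit than the paper about why the first induction yields $\rho_i$ (identifying $T_i\cap(G_i\times G'_i)$ with the diagonal and $\sharp\!-\!\ind_{\Delta(G_i)}^{G_i\times G'_i}\mathbf{1}\simeq\rho_i$), but the route is the same.
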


\begin{proof}
D'apr\`es le lemme \ref{alfin},  ce qui pr\'ec\`ede implique que
\begin{equation*}
\tau^\ast _{i} \simeq \left(\sharp\!-\!\ind\right)_{T_i}
^{M_{(t,n-t)}\times G'_{m}}\left( \xi^0
_{t,i}\otimes\sigma _{n-t,m-i}\right).
\end{equation*}

Induire de $T_i$ \`a $M_{(t,n-t)}\times G'_{m}$ revient \`a induire de $T_i$ \`a $P_{t-i,i}\times G_{n-t}\times P_{i,m-i}$ et puis \`a $M_{(t,n-t)}\times G'_{m}$.  Or, l'induite $$\left(\sharp\!-\! \ind\right)_{T_i}
^{P_{t-i,i}\times G_{n-t}\times P'_{i,m-i}}\left( \xi^0
_{t,i}\otimes\sigma _{n-t,m-i}\right) $$ est la repr\'esentation $\xi^0
_{t,i}\otimes \rho _{i}\otimes \sigma _{n-t,m-i}$.

Ainsi, l'induite $\left(\sharp\!-\! \ind\right)_{T_i}
^{M_{(t,n-t)}\times G'_{m}}\left( \xi^0
_{t,i}\otimes\sigma _{n-t,m-i}\right) $ est la repr\'esentation $$\left(\sharp\!-\! \ind\right)_{P_{t-i,i}\times G_{n-t}\times P'_{i,m-i}}
^{M_{(t,n-t)}\times G'_{m}}\left( \xi^0
_{t,i}\otimes \rho _{i}\otimes \sigma _{n-t,m-i}\right) .$$
Pour achever la proposition il ne nous reste qu'\`a changer les induites et foncteurs de Jacquet non normalis\'es en induites et foncteurs de Jacquet normalis\'es. Ainsi 
\begin{equation*}
\tau _{i} \simeq \delta_{P_{t,n-t}}^{-\frac{1}{2}}\tau_i^\ast \simeq \ind_{P_{t-i,i}\times G_{n-t}\times P'_{i,m-i}}
^{M_{(t,n-t)}\times G'_{m}}\left( \xi
_{t,i}\otimes \rho _{i}\otimes \sigma _{n-t,m-i}\right) ,
\end{equation*}
avec $$\xi _{t,i}=\nu \left( g_0\right) ^m \nu \left( g^{ \prime }\right) ^{-t} \delta _{P_{t-i,i}}^{\frac{-1}{2}}\delta _{P_{t,n-t}}^{\frac{-1}{2}}\delta _{P'_{i,m-i}}^{\frac{-1}{2}},$$
\textit{i.e.} le caract\`{e}re requis.
\end{proof}

Avec les m\^emes arguments on calcule une suite de composition du foncteur de Jacquet $\overline{r}_{t,m-t}^{G'_m}$, agissant cette fois-ci du c\^ot\'e de $G'_m$. 
\begin{proposition}\label{lema2}
Soient $t,i$ des entiers, $0<t \leq m$ $0 \leq i \leq \inf \left\{t,n \right\}$. 
La repr\'esentation $\overline{r}_{t,m-t}^{G'_m}\left( \sigma_{n,m} \right)$ est compos\'ee des repr\'esentations $\overline{\tau'}_i$, $i=0,\dots, \min \left\{ t,n \right\}$ o\`u
\begin{equation*}
\overline{\tau'} _{i} \simeq \ind_{ P_{n-i,i}\times P'_{i,t-i}\times G'_{m-t}}
^{G_{n}\times M'_{(t,m-t)}}\left( \sigma _{n-i,m-t} \otimes \rho _{i} \otimes \overline{\xi'}
_{t,i}\right) ,
\end{equation*}
et o\`u $\overline{\xi'} _{t,i}$
est le caract\`{e}re 
$$\overline{\xi'} _{t,i}= \begin{cases} 
\nu ^{\frac{2t-i}{2}}& \text{sur }G_{n-i} \\
 \nu^{\frac{n+2t-i}{2}}& \text{sur } G_{i} \\
 \nu ^{\frac{-2n+m-2t+i}{2}}& \text{sur } G'_{i}\\
\nu ^{\frac{m-2n-t+i}{2}}& \text{sur } G'_{t-i} \\
 \nu ^{\frac{-t}{2}}& \text{sur } G'_{m-t}.
  \end{cases}$$
\end{proposition}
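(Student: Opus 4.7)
Le plan consiste à adapter la preuve de la proposition \ref{lema1} en échangeant le rôle des deux facteurs de la paire duale: on remplace la transformée de Fourier partielle sur les lignes par celle sur les colonnes, et le parabolique standard $P_{t,n-t}$ par le parabolique opposé $\overline{P}'_{t,m-t}$. Concrètement, on écrit chaque matrice $\mathbf{m} \in \mathcal{M}_{n,m}$ sous la forme $(c \mid d)$ avec $c \in \mathcal{M}_{n,t}$ et $d \in \mathcal{M}_{n,m-t}$, et on introduit la transformée de Fourier partielle dans la variable $c$. L'action du radical unipotent $\overline{U}'_{t,m-t}$, formé des matrices $\bigl(\begin{smallmatrix} 1 & 0 \\ u' & 1 \end{smallmatrix}\bigr)$ avec $u' \in \mathcal{M}_{m-t,t}$, sur le tordu $\sigma^{\ast'}_{n,m}$ devient alors, comme en \eqref{U=0}, une multiplication par un caractère ne dépendant que de $(c,d)$.

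Le calcul des coinvariants est alors parallèle: on identifie $\sharp\!-\!\overline{r}_{t,m-t}^{G'_m}(\sigma_{n,m})$ à $S(A')$, où $A'$ est la partie fermée de $\mathcal{M}_{n,m}$ formée des couples $(c \mid d)$ tels que ${}^t d\, c = 0$. On filtre ensuite $S(A')$ selon le rang de $c$ (qui varie entre $0$ et $\min(t,n)$); les sous-quotients sont des espaces $S(A'_i)$ avec $A'_i = \{(c \mid d) \in A' : \rang(c) = i\}$.

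Pour chaque $i$, on applique le lemme \ref{alfin} avec $X = A'_i$ et $X'$ un sous-espace fermé bien choisi de $A'_i$, formé des matrices où $c$ est en forme normale de rang $i$ (avec le bloc $1_i$ dans un coin fixé) et où la partie correspondante de $d$ est imposée nulle pour respecter la condition ${}^t d\, c = 0$. Le stabilisateur $T'_i$ de $X'$ dans $G_n \times M'_{(t,m-t)}$ se trouve contenu dans $P_{n-i,i} \times P'_{i,t-i} \times G'_{m-t}$, et l'action de $T'_i$ sur $S(X') \cong S(\mathcal{M}_{n-i,m-t})$ est isomorphe, par le même calcul qu'à la proposition \ref{lema1}, au produit tensoriel $\sigma_{n-i,m-t} \otimes \overline{\xi'}^0_{t,i}$ pour un certain caractère $\overline{\xi'}^0_{t,i}$ issu du jacobien et du changement de variable.

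La difficulté principale est purement comptable: il faut tracer les exposants de $\nu$ introduits par la transformée de Fourier, par l'identification avec l'induite (qui fait apparaître la représentation $\rho_i$ de $G_i \times G'_i$ sur $S(G_i)$), puis par la normalisation finale, où apparaît $\delta_{\overline{P}'_{t,m-t}}^{-1/2}$ à la place de $\delta_{P_{t,n-t}}^{-1/2}$. Ceci explique l'asymétrie apparente entre $\overline{\xi'}_{t,i}$ et $\xi_{t,i}$: en particulier, l'exposant $-t/2$ (et non $+t/2$) sur $G'_{m-t}$ provient précisément du fait que l'on utilise le parabolique opposé sur le côté $G'_m$.
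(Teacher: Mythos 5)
Your proposal is correct and is essentially the paper's own proof: the paper disposes of Proposition~\ref{lema2} in one sentence ("avec les m\^emes arguments"), and you simply make the parallel argument explicit --- partial Fourier transform on the column block $c$, the closed set $A'=\{(c\mid d): {}^t c\,d=0\}$ identified with $\sharp\!-\!\overline{r}_{t,m-t}^{G'_m}(\sigma_{n,m})$, filtration by $\rang(c)$, lemma~\ref{alfin} applied to a normal form of rank $i$, and normalisation via $\delta_{\overline{P}'_{t,m-t}}^{-1/2}$. Your remark that the exponent $-t/2$ on $G'_{m-t}$ (rather than $+t/2$) comes from $\delta_{\overline{P}'}=\delta_{P'}^{-1}$ is exactly the source of the apparent asymmetry between $\xi_{t,i}$ and $\overline{\xi'}_{t,i}$.
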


\section{Application}\label{application}
Soient $\pi \in \Irr G_n$, $\pi' \in \Irr G'_m$ telles que $\pi \otimes \pi'$ soit un quotient de $\sigma_{n,m}$. Soit $r$ un entier positif. Pour toute repr\'esentation cuspidale $\chi$ de $G_r$ consid\'erons l'eniter positif maximal $a $ tel que $\pi$ soit une sous-repr\'esentation d'une repr\'esentation de la forme
$$ \chi \times \chi \times \dots \times \chi \times \rho,$$ o\`u on a fait le produit de $a$ fois la repr\'esentation $\chi$ fois $\rho$, $\rho$ \'etant une repr\'esentation irr\'eductible de $G_{n-ra}$.
Alors, par exactitude du foncteur de Jacquet, on a un entrelacement surjectif de $r_{ra,n-ra}^{G_n}\left( \sigma_{n,m}\right)$ dans $ r_{ra,n-ra}^{G_n}\left( \pi \right)\otimes \pi' $ d'o\`u un entrelacement non nul de $ r_{ra,n-ra}^{G_n}\left( \sigma_{n,m}\right) $ dans $ \chi \times \chi \times \dots \times \chi \otimes \rho\otimes \pi' .$

D'apr\`es \ref{lema1}, il existe alors $i \in \left\{ 0, \dots , ra \right\}$ tel que
$$\Hom\left( \tau_i , \chi \times \chi \times \dots \times \chi \otimes \rho\otimes \pi' \right)\neq 0. $$

\begin{lemma}\label{lima}
Les seuls $\tau_i$ qui peuvent avoir des quotients de la forme ci-dessus sont $\tau_{ra}$ et $\tau_{ra-1}$ et, si $\chi \neq \nu ^{\frac{2m-n+1}{2}}$ seul $\tau_{ra}$ peut en avoir. 
\end{lemma}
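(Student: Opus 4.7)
The plan is to use Frobenius reciprocity on the inducing description of $\tau_i$ given in Proposition \ref{lema1}, and then reduce the question to a matching of cuspidal supports on the $G_{t-i}$-factor of the Levi.

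By Frobenius reciprocity, non-vanishing of $\Hom(\tau_i, \chi \times \dots \times \chi \otimes \rho \otimes \pi')$ entails a nonzero morphism from the inducing data $\xi_{t,i} \otimes \rho_i \otimes \sigma_{n-t,m-i}$ into the Jacquet module of the target along the parabolic $P_{t-i,i} \times G_{n-t} \times P'_{i,m-i}$. Focusing on the $G_{t-i}$-factor, the character $\nu^{(2m-n+t-i)/2}$ of $G_{t-i}$ must embed into the $G_{t-i}$-projection of some irreducible subquotient of $r_{t-i,i}^{G_t}(\chi \times \dots \times \chi)$ ($a$ copies of $\chi$).

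Since $\chi \in \Irr(G_r)$ is cuspidal, the geometric lemma of Bernstein-Zelevinsky implies that $r_{t-i,i}^{G_t}(\chi \times \dots \times \chi)$ vanishes unless $r$ divides $t-i$, say $t-i = pr$, and then each composition factor has $G_{t-i}$-cuspidal support equal to $p$ copies of $\chi$. On the other hand, the cuspidal support of the character $\nu^{(2m-n+t-i)/2}|_{G_{t-i}}$ is the multiset of $t-i$ pairwise distinct characters $\nu^{(2m-n+1)/2}, \nu^{(2m-n+3)/2}, \dots, \nu^{(2m-n+2t-2i-1)/2}$ of $G_1$. Equality of cuspidal supports forces either $t-i = 0$ (giving $i = ra$, with no condition on $\chi$) or $r = 1$ together with all $t-i$ shifts equal to $\chi$; by the pairwise distinctness of the shifts, the latter requires $t-i = 1$, whence $i = ra - 1$ and $\chi = \nu^{(2m-n+1)/2}$, exactly as asserted.

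The main obstacle is the careful cuspidal-support bookkeeping across Bernstein components, and verifying that the geometric lemma is applied with the correct normalizations; once these are handled, the case analysis is driven by the divisibility condition $r \mid (t-i)$ and the distinctness of the shifts in the cuspidal support of the character.
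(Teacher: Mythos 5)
Your proposal is correct and follows the same route as the paper: apply the second (Casselman) adjunction to the induced description of $\tau_i$ from Proposition~\ref{lema1}, project onto the $G_{ra-i}$-factor of the Levi, and compare the cuspidal support of the character $\nu^{\frac{2m-n+ra-i}{2}}$ of $G_{ra-i}$ with the cuspidal support $\{\chi,\dots,\chi\}$ coming from the Jacquet module of $\chi\times\dots\times\chi$, which forces $i=ra$ or $i=ra-1$ with $\chi=\nu^{\frac{2m-n+1}{2}}$. The only cosmetic discrepancy is that the adjunction produces the opposite Jacquet functor $\overline{r}$ rather than $r$, which does not affect the cuspidal-support bookkeeping; your extra remarks on the divisibility $r\mid(t-i)$ and the pairwise distinctness of the shifts are implicit in the paper's invocation of uniqueness of cuspidal support.
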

\begin{proof}
En effet, supposons que 
\begin{eqnarray*}
&&\Hom\left( \tau_{i}, \chi \times \chi \times \dots \times \chi \otimes \rho\otimes \pi' \right)\neq 0. \end{eqnarray*}

Cela signifie, par d\'efinition de $\tau_i$, 
\begin{eqnarray*}
&& \Hom \Big{(} \ind_{P_{ra-i,i}\times G_{n-ra}\times P'_{i,m-i}}
^{M_{(ra,n-ra)}\times G'_{m}}\left( \xi
_{ra,i}\otimes \rho _{i}\otimes \sigma _{n-ra,m-i}\right) , \\
&& \qquad \qquad\qquad \qquad\qquad\qquad\chi \times \chi \times \dots \times \chi \otimes \rho\otimes \pi' \Big)\neq 0 \end{eqnarray*}
et, par \eqref{frobcas}, 
\begin{eqnarray*}
&& \Hom\Big( \xi
_{ra,i}\otimes \rho _{i}\otimes \sigma _{n-ra,m-i}, \\
&& \qquad\qquad\overline{r}_{P_{ra-i,i}\times G_{n-ra}\times P'_{i,m-i}}
^{M_{(ra,n-ra)}\times G'_{m}}\left( \chi \times \chi \times \dots \times \chi \otimes \rho\otimes \pi' \right) \Big) \neq 0,
\end{eqnarray*}
d'o\`u
\begin{eqnarray*}
&& \Hom\Big( \xi
_{ra,i}\otimes \rho _{i}\otimes \sigma _{n-ra,m-i}, \\
&& \qquad\qquad\overline{r}_{P_{ra-i,i}}
^{G_{ra}}\left( \chi \times \chi \times \dots \times \chi\right)  \otimes \rho\otimes \overline{r}_{P'_{i,m-i}}
^{G'_{m}}\left(  \pi' \right) \Big) \neq 0,
\end{eqnarray*}
et donc
$$\Hom\left( \xi
_{ra,i}|_{G_{ra-i}}, \overline{r}_{P_{ra-i,i}}^{G_{ra}}
\left( \chi \times \chi \times \dots \times \chi \right)|_{G_{ra-i}} \right) \neq 0,
$$

Ainsi, on trouve finalement
$$\Hom\left(\nu_{ra-i}^{\frac{2m-n+ra-i}{2}} , \chi \times \chi \times \dots \times \chi \right)\neq 0 .$$

Par l'unicit\'e du support cuspidal, il faut alors que $$\supp \left(\nu_{ra-i}^{\frac{2m-n+ra-i}{2}}\right) = \left\{ \chi, \dots, \chi \right\}$$ et donc, ou bien $i=ra$ ou bien $i=ra-1$ et $\chi = \nu ^{\frac{2m-n+1}{2}}$.
\end{proof}
Ainsi on se retrouve avec deux cas:
\begin{enumerate}
\item[{\bf Cas A,}]  $\Hom\left( \tau_{ra}, \chi \times \chi \times \dots \times \chi \otimes \rho\otimes \pi' \right)\neq 0,$
\item[{\bf Cas B,}] $\Hom\left( \tau_{ra-1}, \chi \times \chi \times \dots \times \chi \otimes \rho\otimes \pi' \right)\neq 0$ et $\chi = \nu ^{\frac{2m-n+1}{2}}$.
\end{enumerate}

Examinons successivement les diff\'erents cas du lemme plus haut (le cas B sera trait\'e dans la section \ref{fin}).

\underline{\bf Cas A} Supposons d'abord $\Hom\left( \tau_{ra} , \chi \times \chi \times \dots \times \chi \otimes \rho\otimes \pi' \right)\neq 0 $ (ce qui arrive, en particulier, d'apr\`es le lemme pr\'ec\'edent, pour $\chi$ distinct de $\nu ^{\frac{2m-n+1}{2}}$)

Alors, $\Hom\left( \tau_{ra}, \chi \times \chi \times \dots \times \chi \otimes \rho\otimes \pi' \right)\neq 0 $, s'\'ecrit, d'apr\`es la proposition \ref{lema1}
\begin{eqnarray*}
&& \Hom \Big{(} \ind_{M_{(ra,n-ra)}\times P'_{ra,m-ra}}
^{M_{(ra,n-ra)}\times G'_{m}}\left( \xi
_{ra,ra}\otimes \rho _{ra}\otimes \sigma _{n-ra,m-ra}\right) , \\
&& \qquad \qquad\qquad \qquad\qquad\qquad\chi \times \chi \times \dots \times \chi \otimes \rho\otimes \pi' \Big)\neq 0 \end{eqnarray*}
ce qui implique, par d\'efinition de $ \xi_{ra,ra}$ que
\begin{eqnarray*}
&& \Hom \Big{(} \ind_{P'_{ra,m-ra}}
^{G'_{m}}\left( \rho _{ra}\otimes \sigma _{n-ra,m-ra}\right) , \\
&& \qquad\qquad\qquad\nu ^{\frac{n-m}{2}}\chi \times \nu ^{\frac{n-m}{2}}\chi \times \dots \times \nu ^{\frac{n-m}{2}}\chi \otimes  \nu ^{\frac{-ra}{2}}\rho\otimes  \nu ^{\frac{ra}{2}}\pi' \Big)\neq 0 \end{eqnarray*}
d'o\`u, par le lemme \cite[3.II.3]{MVW},
\begin{eqnarray*}
&\Hom\Big( \ind_{P'_{ra,m-ra}}^{G'_m}\left(\nu ^{\frac{m-n}{2}}\widetilde{\chi} \times \dots \times \nu ^{\frac{m-n}{2}}\widetilde{\chi} \otimes \nu ^{\frac{ra}{2}}\sigma_{n-ra,m-ra}\nu ^{\frac{-ra}{2}}\right),
\\& \rho \otimes \pi' \Big)\neq 0 .
\end{eqnarray*} 

Soit $b\geq 0$ maintenant maximal tel qu'il existe une repr\'esentation irr\'eductible $\rho'$ de $G'_{m-rb}$ avec $\pi'$ quotient de $$\nu ^{\frac{m-n}{2}}\widetilde{\chi} \times \dots \times \nu ^{\frac{m-n}{2}}\widetilde{\chi} \times \rho' $$ o\`u on a fait le produit de $b$ fois la repr\'esentation $ \nu ^{\frac{m-n}{2}}\widetilde{\chi} $.

Ceci \'equivaut, par le corollaire \ref{cambio} au fait que $\pi'$ soit sous-module de $$\rho' \times \nu ^{\frac{m-n}{2}}\widetilde{\chi} \times \dots \times \nu ^{\frac{m-n}{2}}\widetilde{\chi}, $$ o\`u on a fait le produit de $b$ fois la repr\'esentation $ \nu ^{\frac{m-n}{2}}\widetilde{\chi} $. 

Par \eqref{frob}, on a un homomorphisme non trivial de
$r_{m-rb,rb}^{G'_m}(\pi')$ dans $ \rho' \otimes \nu ^{\frac{m-n}{2}}\widetilde{\chi}\times \dots \times \nu ^{\frac{m-n}{2}}\widetilde{\chi} $ d'o\`u, par conjugaison, un homomorphisme non trivial de
$\overline{r}_{rb,m-rb}^{G'_{m}}(\pi')$ dans $\nu ^{\frac{m-n}{2}}\widetilde{\chi}\times \dots \times \nu ^{\frac{m-n}{2}}\widetilde{\chi} \otimes \rho'.$

D'un autre c\^ot\'e, le foncteur de Jacquet \'etant exact, on a un morphisme surjectif dans 
\begin{eqnarray*}
&\hspace{-.4cm}\Hom\Big(\overline{r}_{rb,m-rb}^{G'_{m}}\circ \ind_{P'_{ra,m-ra}}^{G'_m}\left(\nu ^{\frac{m-n}{2}}\widetilde{\chi}\! \times \!\dots \!\times \!\nu ^{\frac{m-n}{2}}\widetilde{\chi} \otimes \nu ^{\frac{ra}{2}}\sigma_{n-ra,m-ra}\nu ^{\prime \frac{-ra}{2}}\right),
\\& \rho \otimes \overline{r}_{rb,m-rb}^{G'_{m}}(\pi') \Big),
\end{eqnarray*} 
qui, compos\'e avec l'homomorphisme non trivial pr\'ec\'edent de
$\overline{r}_{rb,m-rb}^{G'_{m}}(\pi')$ dans $\nu ^{\frac{m-n}{2}}\widetilde{\chi}\times \dots \times \nu ^{\frac{m-n}{2}}\widetilde{\chi} \otimes \rho'$, montre que
\begin{eqnarray*}
&\hspace{-.4cm}\Hom\Big(\overline{r}_{rb,m-rb}^{G'_{m}} \circ \ind_{P'_{ra,m-ra}}^{G'_m}\left(\nu ^{\frac{m-n}{2}}\widetilde{\chi} \! \times \!\dots \!\times \! \nu ^{\frac{m-n}{2}}\widetilde{\chi} \otimes \nu ^{\frac{ra}{2}}\sigma_{n-ra,m-ra}\nu ^{\prime \frac{-ra}{2}}\right),
\\& \rho \otimes \nu ^{\frac{m-n}{2}}\widetilde{\chi}\times \dots \times \nu ^{\frac{m-n}{2}}\widetilde{\chi} \otimes \rho' \Big) \neq 0.
\end{eqnarray*} 

Par le lemme g\'eom\'etrique (\textit{cf.} \cite[\textsection 1.6]{Z1}) et la maximalit\'e de $b$, on d\'eduit que:
\begin{eqnarray*}\hspace{-.2cm}\Hom \hspace{-0.1cm} \Big( & \hspace{-0.3cm}\ind_{P'_{ra,rb-ra}}^{G'_{rb}}& \hspace{-0.5cm}\left(\nu ^{\frac{m-n}{2}}\widetilde{\chi} \! \times \!\dots \!\times \! \nu ^{\frac{m-n}{2}}\widetilde{\chi} \otimes \nu ^{\frac{ra}{2}}\overline{r}_{rb-ra,m-rb}^{G'_{m-ra}}\left(\sigma_{n-ra,m-ra}\right)\nu ^{\prime \frac{-ra}{2}}\right),
\\ && \rho \otimes \nu ^{\frac{m-n}{2}}\widetilde{\chi}\times \dots \times \nu ^{\frac{m-n}{2}}\widetilde{\chi} \otimes \rho' \Big)\neq 0 .
\end{eqnarray*}

D'apr\`es la proposition \ref{lema2}, il existe alors $i \in \left\{ 0, \dots , rb-ra \right\}$ tel que
$$\Hom\left( \overline{\tau'_i} ,\nu ^{\frac{m-n+ra}{2}}\widetilde{\chi}\times \dots \times \nu ^{\frac{m-n+ra}{2}}\widetilde{\chi} \otimes\nu ^{\frac{ra}{2}} \rho' \right)\neq 0 .$$
Le lemme suivant se montre comme le lemme \ref{lima}

\begin{lemma}
Les seuls $\overline{\tau'}_i$ qui peuvent avoir des quotients de la forme ci-dessus sont $\overline{\tau'}_{rb-ra}$ et $\overline{\tau'}_{rb-ra-1}$. Or, si $\nu ^{\frac{m-n}{2}}\widetilde{\chi}\neq \nu ^{\frac{m-2n-1}{2}}$, (\textit{i.e.} si $\chi \neq \nu ^{\frac{n+1}{2}}$) seul $\overline{\tau'}_{rb-ra}$ peut en avoir.
\end{lemma}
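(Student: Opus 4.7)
Le plan est de reproduire, mutatis mutandis, la d\'emonstration du lemme \ref{lima}, en \'echangeant les r\^oles de $G_n$ et $G'_m$ et en rempla\c{c}ant la proposition \ref{lema1} par la proposition \ref{lema2}. Concr\`etement, je suppose
\[
\Hom\left( \overline{\tau'}_i, \nu^{\frac{m-n+ra}{2}}\widetilde{\chi} \times \cdots \times \nu^{\frac{m-n+ra}{2}}\widetilde{\chi} \otimes \nu^{\frac{ra}{2}}\rho' \right) \neq 0,
\]
puis je d\'eplie $\overline{\tau'}_i$ gr\^ace \`a la proposition \ref{lema2}, appliqu\'ee avec les param\`etres $(n,m,t)$ remplac\'es par $(n-ra, m-ra, rb-ra)$. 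Ce $\Hom$ non nul devient un morphisme non nul \`a partir d'une induite parabolique faisant intervenir $\sigma_{n-ra-i, m-rb} \otimes \rho_i \otimes \overline{\xi'}_{rb-ra,i}$.

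J'applique ensuite la r\'eciprocit\'e de Casselman \eqref{frobcas} pour ramener le probl\`eme au niveau du Levi. En me projetant sur le facteur $G'_{(rb-ra)-i}$, situ\'e dans le bloc $G'_{rb-ra}$, sur lequel le caract\`ere $\overline{\xi'}_{rb-ra,i}$ se restreint en une puissance de $\nu$, j'obtiens un entrelacement non nul entre ce caract\`ere (apr\`es prise en compte de la torsion par $\nu^{ra/2}$ issue de l'\'etape pr\'ec\'edente) et la composante $G'_{(rb-ra)-i}$ du foncteur de Jacquet de $\nu^{\frac{m-n+ra}{2}}\widetilde{\chi} \times \cdots \times \nu^{\frac{m-n+ra}{2}}\widetilde{\chi}$.

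L'unicit\'e du support cuspidal impose alors que le support cuspidal du caract\`ere obtenu, form\'e de $(rb-ra)-i$ caract\`eres de $G'_1$, soit un sous-multiensemble du multiensemble constitu\'e de copies de $\nu^{\frac{m-n}{2}}\widetilde{\chi}$, chacune \'etant une cuspidale de $G'_r$. Cela n'est possible que dans deux cas: soit $(rb-ra)-i = 0$, d'o\`u $i = rb-ra$ ; soit $r=1$ et $(rb-ra)-i = 1$, auquel cas un calcul direct de $\overline{\xi'}_{rb-ra,rb-ra-1}|_{G'_1}$ donne la valeur $\nu^{\frac{m-2n-1}{2}}$, d'o\`u la condition $\nu^{\frac{m-n}{2}}\widetilde{\chi} = \nu^{\frac{m-2n-1}{2}}$, c'est-\`a-dire $\chi = \nu^{\frac{n+1}{2}}$.

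L'\'etape la plus d\'elicate sera le suivi pr\'ecis des torsions par $\nu^{ra/2}$ et $\nu'^{-ra/2}$ intervenant dans l'induction consid\'er\'ee, afin de v\'erifier que la valeur obtenue pour le caract\`ere sur $G'_{(rb-ra)-i}$ est bien $\nu^{\frac{m-2n-1}{2}}$ lorsque $i = rb-ra-1$, conform\'ement \`a l'\'enonc\'e.
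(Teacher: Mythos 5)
Your proposal follows exactly the paper's own route: the paper simply states that this lemma ``se montre comme le lemme \ref{lima}'', with Proposition \ref{lema2} in place of Proposition \ref{lema1}, and this is precisely the argument you spell out. The only point to keep straight (which you flag yourself at the end) is the $\nu^{ra/2}$ bookkeeping: after substituting $(n,m,t)\mapsto(n-ra,m-ra,rb-ra)$ in Proposition \ref{lema2}, the restriction of $\overline{\xi'}_{rb-ra,\,rb-ra-1}$ to $G'_1$ is $\nu^{\frac{m-2n+ra-1}{2}}$ (not $\nu^{\frac{m-2n-1}{2}}$), but the target character on that block is $\nu^{\frac{m-n+ra}{2}}\widetilde{\chi}$ rather than $\nu^{\frac{m-n}{2}}\widetilde{\chi}$, so the two offsets of $\nu^{ra/2}$ cancel and one does recover the condition $\chi=\nu^{\frac{n+1}{2}}$.
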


Ainsi on a, \`a nouveau, deux cas:

\underline{\bf Cas A.1} $\Hom\left( \overline{\tau'}_{rb-ra} ,\nu ^{\frac{m-n+ra}{2}}\widetilde{\chi}\times \dots \times \nu ^{\frac{m-n+ra}{2}}\widetilde{\chi} \otimes\nu ^{\frac{ra}{2}} \rho' \right)\neq 0 $ (ce qui arrive, en particulier, d'apr\`es le lemme pr\'ec\'edent si $\chi \neq \nu ^{\frac{2m-n+1}{2}}$ et $\chi \neq \nu ^{\frac{n+1}{2}}$).

\underline{\bf Cas A.2} $\Hom\left( \overline{\tau'}_{rb-ra-1} ,\nu ^{\frac{m-n+ra}{2}}\widetilde{\chi}\times \dots \times \nu ^{\frac{m-n+ra}{2}}\widetilde{\chi} \otimes\nu ^{\frac{ra}{2}} \rho' \right)\neq 0 $ et, dans ce cas, il faut que $\chi = \nu ^{\frac{n+1}{2}}$. 

Regardons d'abord le cas A.1.
\begin{lemma} \label{444}
Dans le cas A.1, on a $b=a$.
\end{lemma}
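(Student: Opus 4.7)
Puisque le cas A.1 exige $rb - ra \geq 0$ (pour que l'indice $i = rb-ra$ soit valable dans la proposition \ref{lema2}), on a d\'ej\`a $b \geq a$ ; il suffit donc de prouver $b \leq a$. Je proc\`ede par l'absurde : supposons $b > a$. L'objectif est de d\'eduire de l'hypoth\`ese du cas A.1 l'existence d'une repr\'esentation irr\'eductible $\rho'' \in \Irr G_{n-rb}$ et d'un plongement $\pi \hookrightarrow \chi \times \dots \times \chi \times \rho''$ avec $b$ copies de $\chi$, ce qui contredira la maximalit\'e de $a$.

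La premi\`ere \'etape consiste \`a d\'evelopper $\overline{\tau'}_{rb-ra}$ en appliquant la proposition \ref{lema2} \`a $\sigma_{n-ra,m-ra}$ avec $t = i = rb-ra$ : cette repr\'esentation s'identifie \`a une induite parabolique de donn\'ee $\sigma_{n-rb, m-rb} \otimes \rho_{rb-ra} \otimes \overline{\xi'}_{rb-ra, rb-ra}$. En substituant cette expression dans le Hom du cas A.1 et en appliquant la r\'eciprocit\'e \`a la Casselman \eqref{frobcas}, on ram\`ene le probl\`eme \`a un entrelacement non nul entre cette donn\'ee inductive et un module de Jacquet (non normalis\'e) appropri\'e de la cible.

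L'\'etape cruciale, analogue \`a celle de la preuve du th\'eor\`eme \ref{primerhowe}, est l'utilisation de \cite[Lemme 3.II.3]{MVW} : l'appariement des $(b-a)$ copies de $\widetilde{\chi}$ apparaissant sur le facteur $G'_{rb-ra}$ de la cible avec la repr\'esentation naturelle $\rho_{rb-ra} = S(G_{rb-ra})$ extrait exactement $(b-a)$ copies de $\chi$ sur le facteur $G_{rb-ra}$ de la source. En r\'eassemblant via les \'etages d'induction, ceci produit un plongement $\rho \hookrightarrow \chi \times \dots \times \chi \times \rho''$ avec $(b-a)$ copies de $\chi$ et un $\rho'' \in \Irr G_{n-rb}$. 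La composition avec le plongement $\pi \hookrightarrow \chi \times \dots \times \chi \times \rho$ fourni par le cas A donne alors $\pi \hookrightarrow \chi \times \dots \times \chi \times \rho''$ avec $b$ copies de $\chi$, d'o\`u la contradiction cherch\'ee.

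La difficult\'e principale r\'eside dans le suivi des torsions par $\nu$ : les torsions $\nu^{\frac{m-n+ra}{2}}$ et $\nu^{\frac{ra}{2}}$ qui figurent dans la cible, les valeurs bloc par bloc de $\overline{\xi'}_{rb-ra, rb-ra}$ d\'etaill\'ees dans la proposition \ref{lema2}, ainsi que le d\'ecalage intrins\`eque \`a \cite[Lemme 3.II.3]{MVW}, doivent se combiner pour que les copies extraites du c\^ot\'e $G_{rb-ra}$ soient pr\'ecis\'ement $\chi$ sans torsion r\'esiduelle, compatibles avec le plongement initial $\pi \hookrightarrow \chi \times \dots \times \chi \times \rho$. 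La v\'erification de cette compatibilit\'e est un calcul de caract\`eres explicite mais de routine, qui r\'eutilise les m\^emes arguments de support cuspidal que dans la preuve du lemme \ref{lima}.
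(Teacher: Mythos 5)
Your proposal follows the same overall route as the paper's own proof: expand $\overline{\tau'}_{rb-ra}$ via Proposition \ref{lema2}, use \cite[Lemme 3.II.3]{MVW} to pair $\rho_{rb-ra}$ against the $(b-a)$ copies of (a twist of) $\widetilde\chi$ on the $G'_{rb-ra}$ factor, thereby forcing $(b-a)$ copies of (a twist of) $\chi$ to appear on the $G_{rb-ra}$ factor, and then conclude by the maximality of $a$. The preliminary observation that $b\geq a$ is automatic from the index range $i\in\{0,\dots,rb-ra\}$ is correct and a useful framing, as is the explicit composition with $\pi\hookrightarrow\chi^{\times a}\times\rho$.

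One caution on the direction of the arrows: what the unwinding of Case A.1 gives directly, as in the paper's second displayed Hom, is that $\nu^{-ra/2}\rho\otimes\nu^{ra/2}\rho'$ is a \emph{quotient} of $\ind_{P_{n-rb,rb-ra}}^{G_{n-ra}}$ applied to the twisted $\sigma_{n-rb,m-rb}\otimes\chi^{\times(b-a)}$. Your formulation "ceci produit un plongement $\rho\hookrightarrow\chi\times\dots\times\chi\times\rho''$" states a \emph{sub}-representation property; passing from the quotient statement to such an embedding requires an extra step, e.g.\ an appeal to Corollary \ref{cambio} (or \cite[Proposition 7.1]{Min1}) to reverse the order of factors, and this is only directly applicable once one has an irreducible on the other side. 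The paper's own proof simply invokes "par maximalit\'e de $a$" from the quotient statement without spelling this out either, so your account is no less complete than the source, but it would be worth being explicit that the contradiction can be read off from the quotient statement (or, equivalently, by the commutation lemma, from the resulting embedding), since the maximality of $a$ was set up in terms of sub-representations $\pi\hookrightarrow\chi^{\times a}\times\rho$.
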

\begin{proof}

Si 
\begin{eqnarray*}\Hom \Big( \!\!& \ind_{P'_{ra,rb-ra}}^{G'_{rb}}&\left(\nu ^{\frac{m-n}{2}}\widetilde{\chi} \times \dots \times \nu ^{\frac{m-n}{2}}\widetilde{\chi} \otimes \nu ^{\frac{ra}{2}}\overline{\tau'}_{rb-ra}\nu ^{\prime \frac{-ra}{2}}\right),
\\ && \rho \otimes \nu ^{\frac{m-n}{2}}\widetilde{\chi}\times \dots \times \nu ^{\frac{m-n}{2}}\widetilde{\chi}\otimes \rho' \Big)\neq 0, 
\end{eqnarray*}
alors, par d\'efinition de $\overline{\tau'}_{rb-ra}$, on a aussi
\begin{eqnarray*}\Hom \Big( \!\!& \ind_{P_{n-rb,rb-ra}}^{G_{n-ra}}&\left( \nu ^{\frac{rb-ra}{2}}\sigma_{n-rb,m-rb}\nu ^{\prime \frac{-rb+ra}{2}}\otimes \nu ^{\frac{-ra}{2}}\chi \times \dots \times \nu ^{\frac{-ra}{2}}\chi \right),
\\ &&\nu ^{\frac{-ra}{2}} \rho \otimes\nu ^{\frac{ra}{2}} \rho' \Big)\neq 0, 
\end{eqnarray*}
o\`u on a fait le produit de $b-a$ fois la repr\'esentation $ \nu ^{\frac{-ra}{2}} \chi $, et donc, par maximalit\'e de $a$, il faut que $b=a$.
\end{proof}
Avant de passer aux autres cas, r\'esumons les r\'esultats obtenus en une proposition.
\begin{proposition}\label{calculo1} 
Soient $\pi \in \Irr G_n$, $\pi' \in \Irr G'_m$ telles que $\pi \otimes \pi'$ soit un quotient de $\sigma_{n,m}$. Soit aussi $$\chi \neq \begin{cases} \nu ^{\frac{n+1}{2}}\\ \nu ^{\frac{2m-n+1}{2}}\end{cases}$$ une repr\'esentation irr\'eductible cuspidale de $G_r$. Alors $a=b$ o\`u $a$ et $b$ sont d\'efinis par les conditions suivantes:
\begin{enumerate}
\item Il existe $\rho \in \Irr(G_{n-ra})$ avec $$\pi \hookrightarrow \chi \times \chi \times \dots \times \chi \times \rho,$$ o\`u on a fait le produit de $a$ fois la repr\'esentation cuspidale $\chi$ et $a$ est maximal.
\item Il existe $\rho' \in \Irr(G'_{m-rb})$ avec $$\pi' \hookrightarrow \rho' \times \nu ^{\frac{m-n}{2}}\widetilde{\chi} \times \dots \times \nu ^{\frac{m-n}{2}}\widetilde{\chi} ,$$ o\`u on a fait le produit de $b$ fois la repr\'esentation cuspidale $\nu ^{\frac{m-n}{2}}\widetilde{\chi}$ et $b$ est maximal.
\end{enumerate}
De plus, on a
$$\Hom \left( \sigma_{n-ra,m-ra}, \nu ^{\frac{-ra}{2}} \rho \otimes\nu ^{\frac{ra}{2}} \rho' \right) \neq 0.$$
\end{proposition}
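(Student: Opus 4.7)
The plan is to assemble the case analyses already built in this section. The two exclusions on $\chi$ are exactly what is needed to force the "good" branch in each of two successive dichotomies (Case~A versus~B, then Case~A.1 versus~A.2), after which the concluding Hom non-vanishing falls out by specialising the intermediate formula of the proof of Lemma~\ref{444} to $b=a$.

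First, I would apply the exact functor $r_{ra,n-ra}^{G_n}$ to the surjection $\sigma_{n,m}\twoheadrightarrow\pi\otimes\pi'$ and compose with the maximality-of-$a$ injection $\pi\hookrightarrow\chi\times\cdots\times\chi\times\rho$ to produce a non-zero morphism from $r_{ra,n-ra}^{G_n}(\sigma_{n,m})$ into $\chi\times\cdots\times\chi\otimes\rho\otimes\pi'$. Proposition~\ref{lema1} filters the source by the $\tau_i$, and Lemma~\ref{lima} restricts the contributing indices to $\{ra,ra-1\}$; since $\chi\neq\nu^{(2m-n+1)/2}$, only $i=ra$ can contribute, placing us in Case~A. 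Unwinding Case~A through the explicit form of $\xi_{ra,ra}$, Frobenius reciprocity \eqref{frob} and the conjugation identity \cite[3.II.3]{MVW} would give a non-zero morphism
$$\ind_{P'_{ra,m-ra}}^{G'_m}\bigl(\nu^{(m-n)/2}\widetilde{\chi}\times\cdots\times\nu^{(m-n)/2}\widetilde{\chi}\otimes\nu^{ra/2}\sigma_{n-ra,m-ra}{\nu'}^{-ra/2}\bigr)\to\rho\otimes\pi'.$$

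Next, I would transfer the trailing cuspidals of $\pi'$ to the front via Corollary~\ref{cambio}, apply the exact functor $\overline{r}_{rb,m-rb}^{G'_m}$, and unfold the result using the geometric lemma \cite[\textsection 1.6]{Z1}. Proposition~\ref{lema2} provides a parallel filtration on the $G'_m$-side, and the $G'_m$-analog of Lemma~\ref{lima} (stated just before Case~A.1) cuts the contributing indices down to $\{rb-ra,\,rb-ra-1\}$, with $i'=rb-ra-1$ possible only when $\chi=\nu^{(n+1)/2}$. The second exclusion on $\chi$ rules this out, placing us in Case~A.1, and Lemma~\ref{444} then forces $b=a$.

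Finally, substituting $b=a$ into the $\Hom$ inequality driving the proof of Lemma~\ref{444} collapses the parabolic induction $\ind_{P_{n-rb,rb-ra}}^{G_{n-ra}}$ to the identity functor (the block of size $rb-ra=0$ disappears), so the inequality reads exactly $\Hom(\sigma_{n-ra,m-ra},\nu^{-ra/2}\rho\otimes\nu^{ra/2}\rho')\neq 0$, which is the last clause. The main technical burden in carrying out the plan is the careful bookkeeping of the normalising twists $\xi_{t,i}$ and $\overline{\xi'}_{t,i}$ through the successive Jacquet manipulations and the two applications of the geometric lemma; but Propositions~\ref{lema1}–\ref{lema2} and Lemmas~\ref{lima} and~\ref{444} have already absorbed this work, so the proposition follows by reading the preceding case analysis in order.
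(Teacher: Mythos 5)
Your proof is correct and follows the paper's approach exactly: in the text Proposition~\ref{calculo1} is explicitly introduced as a ``résumé'' of the Case~A $\to$ Case~A.1 analysis carried out just before it, and you retrace that chain (Jacquet functor on the $G_n$-side, Lemma~\ref{lima} with the first exclusion on $\chi$, unwinding via $\xi_{ra,ra}$ and \cite[3.II.3]{MVW}, the Jacquet functor and geometric lemma on the $G'_m$-side, the unnamed $G'_m$-analogue of Lemma~\ref{lima} with the second exclusion, and Lemma~\ref{444}) in the same order, correctly noting that the final Hom non-vanishing is just the display in the proof of Lemma~\ref{444} specialised to $b=a$.
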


\section{Unicit\'e}\label{unic}
La proposition pr\'ec\'edente, avec le th\'eor\`eme \ref{primerhowe}, nous permet de montrer l'unicit\'e de la correspondance th\^eta.
\begin{theorem}\label{uni}
Supposons $n \leq m$. Soit $\pi$ une repr\'esentation irr\'eductible de $G_{n}$. Il existe une unique repr\'esentation irr\'eductible $\pi'$ de $G'_{m}$ telle que
$$\Hom_{G_n \times G'_m}\left( \omega_{n,m}, \pi \otimes \pi'\right) \neq 0.$$ De plus, $\dim \left( \Hom_{G_n \times G'_m}\left( \omega_{n,m}, \pi \otimes \pi'\right) \right)=1$
\end{theorem}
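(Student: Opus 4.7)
Notre stratégie sera de procéder par récurrence sur $n$. Comme $\omega_{n,m}$ ne diffère de $\sigma_{n,m}$ que par les caractères $\nu^{-m/2}$ et $\nu^{n/2}$, il suffira d'établir l'énoncé analogue pour $\sigma_{n,m}$ : pour toute $\pi\in\Irr(G_n)$, il existe une unique $\pi'\in\Irr(G'_m)$ telle que $\Hom(\sigma_{n,m},\pi\otimes\pi')\neq 0$, cet espace étant de dimension $1$. Le cas $n=0$ étant trivial, on supposera l'énoncé vrai pour tout entier strictement inférieur à $n$.

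Nous distinguerons deux cas. Si $\pi$ n'apparaît pas dans le bord de $\sigma_{n,m}$ (ce qui inclut en particulier toutes les représentations cuspidales), le théorème \ref{primerhowe} donnera directement les deux conclusions. Sinon, nous chercherons une représentation cuspidale \og bonne \fg\ $\chi$ de $G_r$ --- c'est-à-dire distincte de $\nu^{(n+1)/2}$ et de $\nu^{(2m-n+1)/2}$ --- telle que $\Jac_\chi(\pi)\neq 0$. On prendra alors $a\geq 1$ maximal avec $\pi\hookrightarrow\chi\times\cdots\times\chi\times\rho$ ($a$ copies de $\chi$, $\rho\in\Irr(G_{n-ra})$). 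Pour tout $\pi'$ tel que $\pi\otimes\pi'$ soit un quotient de $\sigma_{n,m}$, la proposition \ref{calculo1} fournira une représentation $\rho'\in\Irr(G'_{m-ra})$ avec $b=a$ et
$$\Hom(\sigma_{n-ra,m-ra},\nu^{-ra/2}\rho\otimes\nu^{ra/2}\rho')\neq 0.$$
L'hypothèse de récurrence appliquée à $\nu^{-ra/2}\rho\in\Irr(G_{n-ra})$ assurera alors l'unicité de $\rho'$ et la dimension $1$ de cet espace. Comme $\pi'$ sera sous-module irréductible de $\rho'\times\nu^{(m-n)/2}\widetilde{\chi}\times\cdots\times\nu^{(m-n)/2}\widetilde{\chi}$, l'unicité d'un tel sous-module garantie par \cite[Théorème 5.1]{Min1} entraînera l'unicité de $\pi'$, et la dimension $1$ de $\Hom(\sigma_{n,m},\pi\otimes\pi')$ se déduira du même argument inductif.

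Le point délicat sera d'assurer l'existence d'un caractère cuspidal \og bon \fg\ dans le second cas. Autrement dit, il faudra démontrer que toute représentation dont le support cuspidal est entièrement contenu dans $\{\nu^{(n+1)/2},\nu^{(2m-n+1)/2}\}$ --- les \og mauvaises \fg\ représentations --- ne peut apparaître dans le bord de $\sigma_{n,m}$. Via le corollaire \ref{condit}, ceci se ramènera à examiner le support cuspidal des sous-modules irréductibles de $\sharp\!-\!\ind^{G_n}_{\overline{P}_{n-k,k}}(1_{n-k}\otimes\tau)$ pour $k<n$ : sous l'hypothèse $n\leq m$, les caractères issus de $1_{n-k}$ (convenablement décalés par la normalisation) sont incompatibles avec $\{\nu^{(n+1)/2},\nu^{(2m-n+1)/2}\}$ dès que $k<n$, fournissant la contradiction désirée et fermant la récurrence.
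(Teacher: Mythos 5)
Votre stratégie reproduit pour l'essentiel celle du papier : récurrence, dichotomie suivant l'existence d'une cuspidale \og bonne \fg\ $\chi$ avec $\Jac_\chi(\pi)\neq 0$, le théorème \ref{primerhowe} dans le cas sans bord, la proposition \ref{calculo1} combinée à \cite[Th\'eor\`eme 5.1]{Min1} et à l'hypothèse de récurrence dans l'autre cas. Ce volet de votre argument est correct.

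Le point qui fait défaut se trouve dans la clôture de la dichotomie. Le cas \og sans bonne cuspidale \fg\ est, par définition, celui où pour toute cuspidale $\chi$ telle que $\Jac_\chi(\pi)\neq 0$, on a $\chi\in\{\nu^{\frac{n+1}{2}},\nu^{\frac{2m-n+1}{2}}\}$ (c'est la condition que le papier note J.1). Vous le récrivez comme \og le support cuspidal de $\pi$ est entièrement contenu dans $\{\nu^{\frac{n+1}{2}},\nu^{\frac{2m-n+1}{2}}\}$\fg, ce qui est une condition \emph{strictement plus forte} : la condition J.1 ne contraint que les facteurs cuspidaux apparaissant en \emph{position extrémale} dans le module de Jacquet, et non le support tout entier. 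Par exemple, pour $n=2$, $m=3$, la représentation $\pi=\langle\{\tfrac12,\tfrac32\}\rangle^t$ (série discrète tordue) vérifie $\Jac_\chi(\pi)\neq 0$ uniquement pour $\chi=\nu^{3/2}$, donc satisfait J.1, alors que son support cuspidal $\{\nu^{1/2},\nu^{3/2}\}$ n'est pas contenu dans $\{\nu^{3/2},\nu^{5/2}\}$. Votre dichotomie ne traite donc pas toutes les représentations $\pi$. Corrélativement, l'argument que vous proposez --- \og les caractères issus de $1_{n-k}$ sont incompatibles avec le support cuspidal de $\pi$ \fg\ --- ne fournit pas la contradiction cherchée sous la seule hypothèse J.1, car rien n'oblige le support cuspidal de $\pi$ à être contenu dans l'ensemble interdit. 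Le papier procède plus finement : en enchaînant les adjonctions \eqref{frobcas} et les conjugaisons par le groupe de Weyl, il montre que si $\pi$ est quotient de $\sharp\!-\!\ind^{G_n}_{\overline{P}_{n-k,k}}(1_{n-k}\otimes\tau)$, alors $\Jac_{\nu^{\frac{2k-n+1}{2}}}(\pi)\neq 0$, puis observe que $\frac{2k-n+1}{2}\in\{\frac{n+1}{2},\frac{2m-n+1}{2}\}$ forcerait $k=n$ ou $k=m$, contradiction avec $k<n\leq m$. C'est cette production d'un facteur de Jacquet \emph{précis} (et non une simple inspection du support cuspidal de l'induite) qui clôt le second cas, et c'est l'étape qu'il vous faut ajouter.
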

\begin{proof}
Par r\'ecurrence on peut supposer que le th\'eor\`eme est vrai pour toute paire $\left(G_i, G'_j\right)$, o\`u $ij<nm$. Montrons-le pour la paire $\left(G_n, G'_m\right)$.

Soit $\pi' \in \Irr G'_m$ telles que $\pi \otimes \pi'$ soit un quotient de $\sigma_{n,m}$. Montrons que $\pi'$ est uniquement d\'etermin\'ee par $\pi$.

Supposons d'abord qu'il existe $\chi\neq \begin{cases} \nu ^{\frac{n+1}{2}}\\ \nu ^{\frac{2m-n+1}{2}}\end{cases}$ une repr\'esentation cuspidale de $G_r$, et $\tau \in \Irr(G_{n-r})$ avec $\pi \hookrightarrow \chi \times \tau.$ 

Soit $a>0$ et $\rho \in \Irr(G_{n-ra})$ avec $$\pi \hookrightarrow \chi \times \chi \times \dots \times \chi \times \rho,$$ o\`u on a fait le produit de $a$ fois la repr\'esentation cuspidale $\chi$ et $a$ est maximal.

D'apr\`es la proposition pr\'ec\'edente, il existe $\rho' \in \Irr(G'_{m-ra})$ avec $$\pi' \hookrightarrow \rho' \times \nu ^{\frac{m-n}{2}}\widetilde{\chi} \times \dots \times \nu ^{\frac{m-n}{2}}\widetilde{\chi} ,$$ o\`u on a fait le produit de $a$ fois la repr\'esentation cuspidale $\nu ^{\frac{m-n}{2}}\widetilde{\chi}$. De plus, on a
$$\Hom \left( \sigma_{n-ra,m-ra}, \nu ^{\frac{-ra}{2}} \rho \otimes\nu ^{\frac{ra}{2}} \rho' \right) \neq 0.$$

Par hypoth\`ese de r\'ecurrence, $\rho'$ est uniquement d\'etermin\'ee par $\rho$ et, par \cite[Th\'eor\`eme 5.1]{Min1},  $\pi'$ est l'\textit{unique} sous-module irr\'eductible de $\rho' \times \nu ^{\frac{m-n}{2}}\widetilde{\chi} \times \dots \times \nu ^{\frac{m-n}{2}}\widetilde{\chi}.$

Sinon, la repr\'esentation $\pi$ est telle que, si $\Jac_{\chi}(\pi) \neq 0$ et $\chi$ cuspidale, alors $\chi \in \left\{\nu ^{\frac{n+1}{2}}, \nu ^{\frac{2m-n+1}{2}} \right\}$. Dans ce cas $\pi$ n'appara\^it pas dans le bord de $\sigma_{n,m}$ car elle n'est pas quotient d'une repr\'esentation de la forme $\sharp\!-\!\ind^{G_{n}}_{\overline{P}_{n-k,k}}\left( 1_{n-k} \otimes \tau\right)$ avec $\tau \in \Irr (G_k)$.
En effet, si
$$\Hom \left( \sharp\!-\!\ind^{G_{n}}_{\overline{P}_{n-k,k}}\left( 1_{n-k} \otimes \tau\right) , \pi \right) \neq 0,$$
on trouve, apr\`es normalisation, que
$$\Hom \left( \ind^{G_{n}}_{\overline{P}_{n-k,k}}\left( \nu^{\frac{k}{2}} \otimes \nu^{\frac{k-n}{2}} \tau\right) , \pi \right) \neq 0.$$
Par conjugaison, on d\'eduit
$$\Hom \left( \ind^{G_{n}}_{P_{k,n-k}}\left( \nu^{\frac{k-n}{2}} \tau  \otimes \nu^{\frac{k}{2}}  \right) , \pi \right) \neq 0$$
puis, par \eqref{frobcas},
$$\Hom \left(\nu^{\frac{k-n}{2}} \tau  \otimes \nu^{\frac{k}{2}}   , \overline{r}_{P_{k,n-k}}^{G_n} (\pi) \right) \neq 0,$$
et, \`a nouveau par conjugaison,
$$\Hom \left( \nu^{\frac{k}{2}}  \otimes \nu^{\frac{k-n}{2}} \tau   , r_{P_{n-k,k}}^{G_n} (\pi) \right) \neq 0,$$
et donc $\Jac_{\nu ^{\frac{-n+2k+1}{2}}}(\pi) \neq 0$. Ceci n'est possible que si $k=n$ ou $k=m$ ce qui est absurde.

Ainsi d'apr\`es \ref{primerhowe}, $\pi'$ est l'unique quotient de $\sharp\!-\!\ind^{G'_{m}}_{P'_{m-n,n}}\left( 1_{m-n} \otimes \widetilde{\pi}\right)$.

\end{proof}

\section{La correspondance explicite}\label{lademo}

Soient $\pi\in \Irr(G_n), \pi' \in \Irr(G'_m)$ telles que
$$\Hom_{G_n \times G'_m}\left( \omega_{n,m}, \pi \otimes \pi'\right) \neq 0.$$
La fin de l'article est consacr\'e au calcul des param\`etres de Langlands $\pi'$ en termes de ceux de $\pi$.

Soient $\tau_1,\dots ,\tau_N$ des repr\'esentations essentiellement de carr\'e int\'egrable et $\alpha_1, \dots, \alpha_N \in \mathbb{R}$ tels que, pour tout $1 \leq i \leq N$, $\nu^{\alpha_i}\tau_i $ soit une repr\'esentation de carr\'e int\'egrable. Soit $\sigma$ une permutation de $\left\{1, \dots, N \right\}$ telle que
$\alpha_{\sigma(i)} \geq \alpha_{\sigma(j)}$ si $i <j$. La repr\'esentation $\tau_{\sigma(1)} \times \tau_{\sigma(2)} \times \dots \times \tau_{\sigma(N)}$ a un unique quotient irr\'eductible, et on dira que c'est le quotient de Langlands de $\tau_1 \times \dots \times \tau_N$.

Supposons que $\pi$ est le quotient de Langlands de $\tau_1 \times \dots \times \tau_N$, o\`u $\tau_1,\dots ,\tau_N$ sont des repr\'esentations essentiellement de carr\'e int\'egrable. Notons alors $\theta^\ast _m(\pi)$ le quotient de Langlands de $$\nu ^{\frac{m-2n-1}{2}} \times \dots \times \nu ^{\frac{-m+1}{2}} \times\nu ^{\frac{m-n}{2}} \widetilde{\tau_1} \times \dots \times \nu ^{\frac{m-n}{2}} \widetilde{\tau_N}.$$

\begin{theorem}\label{expl}
Si $m \geq n$ et $$\Hom_{G_n \times G'_m}\left( \sigma_{n,m}, \pi \otimes \pi'\right) \neq 0,$$
alors $\pi'= \theta^\ast _m(\pi)$.
\end{theorem}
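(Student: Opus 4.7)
Mon plan suit la structure inductive de la preuve du Th\'eor\`eme \ref{uni}, en tra\c{c}ant cette fois les param\`etres de Langlands. Je proc\`ede par r\'ecurrence sur le produit $nm$. Le cas initial $n=0$ est imm\'ediat: $\pi = 1_0$ impose $\pi' = 1_m$, et $\theta^*_m(1_0)$ est par d\'efinition le quotient de Langlands de $\nu^{(m-1)/2} \times \dots \times \nu^{(1-m)/2}$, qui vaut $1_m$.

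Pour l'\'etape inductive, je distingue deux cas selon le support cuspidal de $\pi$, comme dans la preuve du Th\'eor\`eme \ref{uni}. Dans le cas \emph{g\'en\'erique} --- lorsqu'il existe une repr\'esentation cuspidale $\chi$ d'un $G_r$, avec $\chi \notin \{\nu^{(n+1)/2}, \nu^{(2m-n+1)/2}\}$ et $\Jac_\chi(\pi) \neq 0$ --- la Proposition \ref{calculo1} fournit $a > 0$ maximal et des repr\'esentations $\rho \in \Irr(G_{n-ra})$, $\rho' \in \Irr(G'_{m-ra})$ avec $\pi \hookrightarrow \chi \times \dots \times \chi \times \rho$ ($a$ copies de $\chi$), $\pi' \hookrightarrow \rho' \times \nu^{(m-n)/2}\widetilde\chi \times \dots \times \nu^{(m-n)/2}\widetilde\chi$ ($a$ copies), et $\Hom(\sigma_{n-ra,m-ra}, \nu^{-ra/2}\rho \otimes \nu^{ra/2}\rho') \neq 0$. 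L'hypoth\`ese de r\'ecurrence donne $\nu^{ra/2}\rho' = \theta^*_{m-ra}(\nu^{-ra/2}\rho)$; le Corollaire A.3 de \cite{Min1} assure qu'extraire $a$ copies de $\chi$ (vues comme des segments de longueur $1$) de $\pi$ correspond \`a retirer ces segments des param\`etres de Langlands, et dualement du c\^ot\'e de $\pi'$. Une v\'erification directe par concat\'enation des param\`etres, combin\'ee \`a l'unicit\'e du sous-module de \cite[Th\'eor\`eme 5.1]{Min1}, donne alors $\pi' = \theta^*_m(\pi)$.

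Dans le cas \emph{au bord} --- lorsque tout $\chi$ v\'erifiant $\Jac_\chi(\pi) \neq 0$ appartient \`a $\{\nu^{(n+1)/2}, \nu^{(2m-n+1)/2}\}$ --- l'argument de la preuve du Th\'eor\`eme \ref{uni} montre que $\pi$ n'appara\^it pas dans le bord de $\sigma_{n,m}$. Le Th\'eor\`eme \ref{primerhowe} identifie alors $\pi'$ comme l'unique quotient irr\'eductible de $\sharp\!-\!\ind^{G'_m}_{P'_{m-n,n}}(1_{m-n}\otimes\widetilde\pi)$, qui apr\`es normalisation s'\'ecrit $\nu^{-n/2}1_{m-n} \times \nu^{(m-n)/2}\widetilde\pi$. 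Il reste \`a v\'erifier que ce quotient co\"incide avec $\theta^*_m(\pi)$.

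C'est pr\'ecis\'ement l\`a que r\'eside l'obstacle principal. Puisque le support cuspidal de $\pi$ est concentr\'e sur $\{\nu^{(n+1)/2}, \nu^{(2m-n+1)/2}\}$, le facteur $\nu^{(m-n)/2}\widetilde\pi$ inclut des caract\`eres tels que $\nu^{(m-n)/2}\widetilde{\nu^{(n+1)/2}} = \nu^{(m-2n-1)/2}$, qui co\"incide avec l'extr\'emit\'e du bloc au bord $\nu^{(m-2n-1)/2} \times \dots \times \nu^{(1-m)/2}$ apparaissant dans la d\'efinition de $\theta^*_m(\pi)$. Les donn\'ees de Langlands de $\theta^*_m(\pi)$ ne sont donc pas simplement la concat\'enation na\"ive: les segments de Zelevinsky peuvent se recombiner en segments plus longs. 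C'est ici qu'interviennent les \og propri\'et\'es subtiles de la classification de Zelevinsky \fg{} mentionn\'ees dans l'introduction et d\'evelopp\'ees dans la section \ref{fin}. Une fois cette identification combinatoire effectu\'ee, l'\'egalit\'e avec l'unique quotient fourni par le Th\'eor\`eme \ref{primerhowe} en r\'esulte.
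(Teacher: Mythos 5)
Your overall structure---induction on $nm$ with a dichotomy according to whether there is a cuspidal $\chi\notin\{\nu^{(n+1)/2},\nu^{(2m-n+1)/2}\}$ in the top of $\pi$---matches the paper's, and the generic case (section \ref{simple}) is handled correctly via Proposition \ref{calculo1} and Corollaire \ref{rrrec}.

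The boundary case, however, is where the substance of the theorem lives, and your proposal leaves it unresolved. You propose to apply Theorem \ref{primerhowe} to realize $\pi'$ as the unique irreducible quotient of $\sharp\!-\!\ind^{G'_m}_{P'_{m-n,n}}(1_{m-n}\otimes\widetilde\pi)\simeq \nu^{-n/2}1_{m-n}\times\nu^{(m-n)/2}\widetilde\pi$, and then to match this against $\theta^*_m(\pi)$ by a ``combinatorial identification'' of Zelevinsky parameters, which you explicitly defer. But this identification is precisely the non-trivial point: as you yourself observe, the segments of $\nu^{(m-n)/2}\widetilde\pi$ can start at $\nu^{(m-2n-1)/2}$, the right endpoint of the segment giving $\nu^{-n/2}1_{m-n}$, so the Langlands data of the unique quotient is \emph{not} a priori the naive concatenation of the two factors' data, and proving that it nonetheless equals $\theta^*_m(\pi)$ requires genuine work. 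The paper does not take your route at all in this case: section \ref{fin} returns to the Kudla filtration, proves the subcase A.1 is impossible (a maximality-of-$a$ contradiction), shows the purely-$\nu^{(2m-n+1)/2}$ subcase (conditions H.1--H.2) cannot occur for $m>n$ via Proposition \ref{segm}, and in the surviving subcase A.2 establishes a shifted version of Proposition \ref{calculo1} with $b=a+1$, applies the inductive hypothesis to $\sigma_{n-a,m-a-1}$, and then invokes \cite[Th\'eor\`eme 6.6]{Min1} together with Proposition \ref{segm} to compute the parameters. Your alternative route would need its own version of this segment analysis to be a proof; as written it is a gap, not a proof.
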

\begin{remark}
En particulier, si $m=n$, $\pi'= \widetilde{\pi}$, comme on l'avait d\'ej\`a montr\'e au d\'ebut de la section \ref{filtraciones1}. Dans la preuve, on supposera alors $m>n$.
\end{remark}

Des th\'eor\`emes \ref{uni} et \ref{expl}, il r\'esulte, avec la normalisation correspondante, un th\'eor\`eme similaire pour la repr\'esentation $\omega_{n,m}$ (\textit{cf.} \eqref{omega}):

\begin{corollary}\label{expl2}
Soit $\pi$ une repr\'esentation irr\'eductible de $G_{n}$.
\begin{enumerate}
\item Si $\Hom_{G_n}\left( \omega_{n,m}, \pi \right) \neq 0$, alors il existe une unique repr\'esentation irr\'eductible $\pi'$ de $G'_{m}$ telle que
$$\Hom_{G_n \times G'_m}\left( \omega_{n,m}, \pi \otimes \pi'\right) \neq 0.$$ De plus, $\dim \left( \Hom_{G_n \times G'_m}\left( \omega_{n,m}, \pi \otimes \pi'\right) \right)=1$.
\item Supposons $n \leq m$. Alors  $\Hom_{G_n}\left( \omega_{n,m}, \pi \right) \neq 0$ et, si $\pi$ est le quotient de Langlands de $\tau_1 \times \dots \times \tau_N$, o\`u $\tau_1,\dots ,\tau_N$ sont des repr\'esentations essentiellement de carr\'e int\'egrable alors  $\pi'$ est le quotient de Langlands de $$\nu ^{-\frac{m-n-1}{2}} \times \dots \times \nu ^{\frac{m-n-1}{2}} \times \widetilde{\tau_1} \times \dots \times \widetilde{\tau_N}.$$
\end{enumerate}
\end{corollary}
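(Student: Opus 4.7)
The plan is to deduce this corollary directly from Theorems \ref{uni} and \ref{expl} by undoing the character twist that relates $\sigma_{n,m}$ to $\omega_{n,m}$. By \eqref{omega}, the representation $\omega_{n,m}$ is obtained from $\sigma_{n,m}$ by twisting with $\nu^{-m/2} \otimes \nu^{n/2}$ on $G_n \times G'_m$, so for any $\pi \in \Irr(G_n)$ and $\pi' \in \Irr(G'_m)$,
\[
\Hom_{G_n\times G'_m}(\omega_{n,m}, \pi\otimes\pi') \simeq \Hom_{G_n\times G'_m}\bigl(\sigma_{n,m},\ \nu^{m/2}\pi \otimes \nu^{-n/2}\pi'\bigr).
\]
The two Hom-spaces are simultaneously zero or non-zero and have the same dimension, so the whole statement reduces to applying the preceding theorems to the right-hand side and twisting back.

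For part (1) with $n \leq m$, I would apply Theorem \ref{uni} to $\nu^{m/2}\pi$: it produces a unique $\rho \in \Irr(G'_m)$ with Hom-space non-zero and of dimension one, and then $\pi' := \nu^{n/2}\rho$ fulfils the requirement. For the case $n > m$, I would invoke the transposition symmetry $\mathcal{M}_{n,m} \simeq \mathcal{M}_{m,n}$ (which exchanges left and right actions), reducing to the pair $(G'_m, G_n)$ with $m \leq n$, where Theorem \ref{uni} again applies.

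For part (2), assume $n \leq m$. The non-vanishing $\Hom_{G_n}(\omega_{n,m},\pi) \neq 0$ is the Godement--Jacquet statement recalled at the start of Section \ref{filtraciones1}, transported via the character twist. Since $\pi$ is the Langlands quotient of $\tau_1 \times \cdots \times \tau_N$, the twist $\nu^{m/2}\pi$ is the Langlands quotient of $\nu^{m/2}\tau_1 \times \cdots \times \nu^{m/2}\tau_N$, so by Theorem \ref{expl} the associated $\rho = \theta^*_m(\nu^{m/2}\pi)$ is the Langlands quotient of
\[
\nu^{(m-2n-1)/2} \times \cdots \times \nu^{(-m+1)/2} \times \nu^{(m-n)/2}\widetilde{\nu^{m/2}\tau_1} \times \cdots \times \nu^{(m-n)/2}\widetilde{\nu^{m/2}\tau_N}.
\]
Using $\widetilde{\nu^{m/2}\tau_i} = \nu^{-m/2}\widetilde{\tau_i}$ simplifies the last $N$ factors to $\nu^{-n/2}\widetilde{\tau_i}$. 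Twisting back by $\nu^{n/2}$ to recover $\pi' = \nu^{n/2}\rho$ shifts every exponent by $+n/2$, giving the Langlands quotient of
\[
\nu^{-(m-n-1)/2} \times \cdots \times \nu^{(m-n-1)/2} \times \widetilde{\tau_1} \times \cdots \times \widetilde{\tau_N},
\]
which is exactly the stated formula. I would also note in passing the elementary fact that character twisting commutes with the formation of Langlands quotients, since it preserves essential square-integrability and shifts the Langlands exponents $\alpha_i$ by the same real constant, keeping the ordering condition intact.

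The only delicate point is the arithmetic bookkeeping of the $\nu^{\pm m/2}$ and $\nu^{\pm n/2}$ shifts and the verification that the exponents $(m-2n-1)/2, \ldots, (-m+1)/2$ shifted by $+n/2$ reproduce the symmetric list $-(m-n-1)/2, \ldots, (m-n-1)/2$ appearing in the statement; there is no new representation-theoretic input, the substance having been carried out in Theorems \ref{uni} and \ref{expl}.
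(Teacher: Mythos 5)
Your reduction is the same one the paper intends: the one-sentence proof in the text says the corollary follows from Theorems \ref{uni} and \ref{expl} ``avec la normalisation correspondante,'' and you have spelled out exactly that bookkeeping. The twist identity
\[
\Hom_{G_n\times G'_m}(\omega_{n,m},\pi\otimes\pi')\simeq\Hom_{G_n\times G'_m}\bigl(\sigma_{n,m},\nu^{m/2}\pi\otimes\nu^{-n/2}\pi'\bigr)
\]
is correct, the observation that twisting by $\nu^{s}$ commutes with Langlands quotients is correct, and your exponent arithmetic for part (2) checks out: the list $(m-2n-1)/2,\dots,(-m+1)/2$ shifted by $n/2$ is indeed the symmetric list $\pm(m-n-1)/2$, and $\nu^{(m-n)/2}\widetilde{\nu^{m/2}\tau_i}=\nu^{-n/2}\widetilde{\tau_i}$ twists back to $\widetilde{\tau_i}$.

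There is one genuine, if small, gap in your treatment of part (1) for $n>m$. After using the symmetry of the pair to reduce to the pair $(G'_m,G_n)$ with $m\le n$, you write that Theorem \ref{uni} ``again applies''; but Theorem \ref{uni} applied to that pair gives, for each $\pi'\in\Irr(G'_m)$, a \emph{unique} $\pi\in\Irr(G_n)$ and a one-dimensional Hom space. What the corollary asks for is the reverse direction: given $\pi$ on the \emph{larger} side, uniqueness of $\pi'$. That requires injectivity of the theta lift $\pi'\mapsto\theta(\pi')$, which Theorem \ref{uni} alone does not provide; it follows instead from the explicit formula in Theorem \ref{expl} (distinct $\pi'$ have distinct Langlands multisets, hence distinct lifts, since the $m-n$ character factors $\nu^{j}$ are a fixed multiset and multiset sum is cancellative). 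This is precisely why the paper cites \emph{both} theorems in its one-sentence proof, and you should do the same for the $n>m$ case. A secondary, more cosmetic point: for noncommutative $D$ the naive matrix transpose $\mathcal{M}_{n,m}\to\mathcal{M}_{m,n}$ is not a morphism of $D$-bimodules, so the ``transposition symmetry'' needs a word about the opposite algebra or simply a reference to the intrinsic symmetry of the dual pair; this does not affect the substance of the argument.
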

Pour la preuve du th\'eor\`eme \ref{expl}, on va utiliser plusieurs fois le lemme suivant \cite[Th\'eor\`eme A.3]{Min1}
\begin{lemma}\label{rrec}
Soient $\chi$ une repr\'esentation cuspidale de $G_r$ telle que $\chi \neq \begin{cases} \nu ^{\frac{n+1}{2}}\\ \nu ^{\frac{2m-n+1}{2},}\end{cases}$ $\rho\in \Irr(G_{n-r})$, et $\pi$ l'unique sous-repr\'esentation irr\'eductible de $\chi \times \rho$. Notons $\pi'$ l'unique sous-repr\'e\-sen\-tation irr\'eductible de $\nu^{\frac{-r}{2}}\theta_{m-r}^\ast(\nu^{\frac{-r}{2}}\rho) \times \nu^{\frac{m-n}{2}}\widetilde{\chi}$. Alors
$$\pi'= \theta_{m}^\ast(\pi).$$
\end{lemma}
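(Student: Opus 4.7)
L'\'enonc\'e est essentiellement une assertion combinatoire sur les param\`etres de Langlands, puisque l'op\'eration $\theta_m^\ast$ est d\'efinie purement \`a partir de ceux-ci. Le plan consiste \`a calculer explicitement les deux membres en termes de param\`etres, puis \`a v\'erifier qu'ils co\"incident.

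D'abord, on \'ecrit $\rho$ comme quotient de Langlands de $\tau_1 \times \dots \times \tau_{N-1}$ avec chaque $\tau_i$ essentiellement de carr\'e int\'egrable. L'hypoth\`ese que $\pi$ est l'unique sous-module irr\'eductible de $\chi \times \rho$ force $\pi$ \`a \^etre le quotient de Langlands de $\chi \times \tau_1 \times \dots \times \tau_{N-1}$, avec $\chi$ plac\'e dans l'ordre dict\'e par son exposant. Directement par la d\'efinition de $\theta_m^\ast$, on trouve alors que $\theta_m^\ast(\pi)$ est le quotient de Langlands de
\[
\nu^{\frac{m-2n-1}{2}} \times \dots \times \nu^{\frac{-m+1}{2}} \times \nu^{\frac{m-n}{2}}\widetilde{\chi} \times \nu^{\frac{m-n}{2}}\widetilde{\tau_1} \times \dots \times \nu^{\frac{m-n}{2}}\widetilde{\tau_{N-1}}.
\]
Ensuite, en appliquant la d\'efinition de $\theta_{m-r}^\ast$ \`a $\nu^{\frac{-r}{2}}\rho$ (qui a pour param\`etres de Langlands les $\nu^{\frac{-r}{2}}\tau_i$), puis en tordant par $\nu^{\frac{-r}{2}}$, un calcul direct montre que $\nu^{\frac{-r}{2}}\theta_{m-r}^\ast(\nu^{\frac{-r}{2}}\rho)$ est le quotient de Langlands de
\[
\nu^{\frac{m-2n-1}{2}} \times \dots \times \nu^{\frac{-m+1}{2}} \times \nu^{\frac{m-n}{2}}\widetilde{\tau_1} \times \dots \times \nu^{\frac{m-n}{2}}\widetilde{\tau_{N-1}},
\]
tous les d\'ecalages en $\frac{r}{2}$ se simplifiant.

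Il restera donc \`a identifier l'unique sous-module irr\'eductible de ce quotient de Langlands fois $\nu^{\frac{m-n}{2}}\widetilde{\chi}$, dont l'unicit\'e (puisque $\nu^{\frac{m-n}{2}}\widetilde{\chi}$ est cuspidal) d\'ecoule de \cite[Th\'eor\`eme 5.1]{Min1}, avec le quotient de Langlands du multi-ensemble combin\'e ci-dessus. C'est le point d\'elicat~: il faut v\'erifier que $\nu^{\frac{m-n}{2}}\widetilde{\chi}$ occupe la bonne position dans l'ordre de Langlands combin\'e, ce qui rel\`eve d'une analyse fine des liens, au sens de Zelevinsky, entre $\nu^{\frac{m-n}{2}}\widetilde{\chi}$ et les autres composantes. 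Les deux exclusions $\chi \neq \nu^{\frac{n+1}{2}}$ et $\chi \neq \nu^{\frac{2m-n+1}{2}}$ sont pr\'ecis\'ement celles qui emp\^echent $\nu^{\frac{m-n}{2}}\widetilde{\chi}$ d'\^etre \'egal ou li\'e aux extr\'emit\'es $\nu^{\frac{m-2n-1}{2}}$ et $\nu^{\frac{-m+1}{2}}$ du segment ajout\'e, assurant ainsi qu'aucun effondrement de segments ne se produit et que le sous-module co\"incide bien avec le quotient de Langlands pr\'evu.
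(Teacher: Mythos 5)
Your argument breaks down at the very first reduction. You assert that if $\rho$ has Langlands parameters $\tau_1,\dots,\tau_{N-1}$, then the unique irreducible sub-module $\pi$ of $\chi\times\rho$ is automatically the Langlands quotient of $\chi\times\tau_1\times\dots\times\tau_{N-1}$ (with $\chi$ slotted into the correct position). This is false: the passage to the unique irreducible sub-module of $\chi\times\rho$ can \emph{glue} the cuspidal segment of $\chi$ onto an existing segment of $\rho$, in which case the multisegment of $\pi$ is not $\{\chi,\tau_1,\dots,\tau_{N-1}\}$. For a concrete instance take $D=F$, $r=1$, $n=2$, $m=3$, $\chi=\nu^{1}$, $\rho=\nu^{0}$ (so $\tau_1=\nu^0$ and the exclusions $\chi\neq\nu^{3/2},\nu^{5/2}$ are satisfied). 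Then $\chi\times\rho=\nu^{1}\times\nu^{0}$ has unique irreducible sub-module $\pi=\nu^{1/2}\mathrm{St}$, whose multisegment is the single segment $\{0,1\}$ — \emph{not} the Langlands quotient of $\nu^1\times\nu^0$, which is $\nu^{1/2}\mathbf{1}$. Because $\theta_m^\ast$ is by definition read off from the Langlands parameters of $\pi$, this mistake propagates: your method would compute $\theta_3^\ast(\pi)$ from the wrong multisegment $\{\{1\},\{0\}\}$, giving the Langlands quotient of $\nu^{-1}\times\nu^{-1/2}\times\nu^{1/2}$ (namely $\mathbf{1}_2\times\nu^{-1}$), whereas the correct $\theta_3^\ast(\pi)$, computed from the true parameter $\{\{0,1\}\}$, is $\mathrm{St}\times\nu^{-1}$ — and a Jacquet-module check confirms that the latter is indeed the unique irreducible sub-module $\pi'$ of $\nu^{-1/2}\theta_{m-r}^\ast(\nu^{-1/2}\rho)\times\nu^{(m-n)/2}\widetilde\chi$. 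So the statement is true but your route to it fails precisely in the linked case.

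Note also that the paper does not prove this lemma: it is imported wholesale from \cite[Th\'eor\`eme A.3]{Min1}, so there is no in-paper argument to compare against. The real content of the cited result is exactly the combinatorial compatibility you skipped over — namely that adjoining $\chi$ on the sub-module side for $\pi$ and adjoining $\nu^{(m-n)/2}\widetilde\chi$ on the sub-module side for $\pi'$ produce compatible segment-gluings, and this requires a genuine Zelevinsky-type linking analysis on \emph{both} sides, not only on the $\pi'$ side as your last paragraph suggests. Identifying Langlands parameters of $\pi$ with the naïve union multisegment and treating $\theta_m^\ast$ as a formal operation on that union is precisely what cannot be assumed.
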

\begin{corollary}\label{rrrec}
Soient $\chi$ une repr\'esentation cuspidale de $G_r$, $\chi \neq \begin{cases} \nu ^{\frac{n+1}{2}}\\ \nu ^{\frac{2m-n+1}{2}.}\end{cases}$ Soient $a\in \mathbb{N}^\ast$, $\rho\in \Irr(G_{n-ra})$, et $\pi$ l'unique sous-repr\'e\-sen\-tation irr\'eductible de $\chi \times \dots \times \chi \times \rho$ o\`u on a fait le produit de $a$ fois la repr\'esentation $\chi$. Notons $\pi'$ l'unique sous-repr\'e\-sen\-tation irr\'eductible de $\nu^{\frac{-ra}{2}}\theta_{m-ra}^\ast(\nu^{\frac{-ra}{2}}\rho) \times \nu ^{\frac{m-n}{2}}\widetilde{\chi} \times \dots \times \nu ^{\frac{m-n}{2}}\widetilde{\chi} ,$ o\`u on a fait le produit de $a$ fois la repr\'esentation $\nu ^{\frac{m-n}{2}}\widetilde{\chi}$. Alors
$$\pi'= \theta_{m}^\ast(\pi).$$
\end{corollary}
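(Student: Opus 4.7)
Le plan est de proc\'eder par r\'ecurrence sur $a$. Le cas de base $a=1$ sera trait\'e directement par le lemme \ref{rrec} auquel il se ram\`ene. Dans l'\'etape inductive ($a \geq 2$), on introduira d'abord $\pi_1$ l'unique sous-module irr\'eductible du produit de $a-1$ fois la repr\'esentation $\chi$ par $\rho$, dont l'existence et l'unicit\'e d\'ecoulent de \cite[Th\'eor\`eme 5.1]{Min1}. Puisque $\chi \times \pi_1$ se plonge dans $\chi \times \dots \times \chi \times \rho$, on montrera que $\pi$ s'identifie \`a l'unique sous-module irr\'eductible de $\chi \times \pi_1$. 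On appliquera alors le lemme \ref{rrec} \`a cette inclusion pour obtenir que $\theta^\ast_m(\pi)$ est l'unique sous-module irr\'eductible de $\nu^{-r/2}\theta^\ast_{m-r}(\nu^{-r/2}\pi_1) \times \nu^{(m-n)/2}\widetilde{\chi}$.

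L'\'etape suivante consistera \`a appliquer l'hypoth\`ese de r\'ecurrence \`a $\nu^{-r/2}\pi_1 \in \Irr(G_{n-r})$, vue comme sous-module irr\'eductible du produit de $a-1$ fois $\nu^{-r/2}\chi$ par $\nu^{-r/2}\rho$, pour la paire duale $(G_{n-r}, G'_{m-r})$. Le point crucial est que les hypoth\`eses $\chi \neq \nu^{(n+1)/2}$ et $\chi \neq \nu^{(2m-n+1)/2}$ se traduisent exactement en les conditions $\nu^{-r/2}\chi \neq \nu^{(n-r+1)/2}$ et $\nu^{-r/2}\chi \neq \nu^{(2(m-r)-(n-r)+1)/2}$ requises pour la nouvelle paire; on pourra donc invoquer l'hypoth\`ese de r\'ecurrence pour d\'ecrire $\theta^\ast_{m-r}(\nu^{-r/2}\pi_1)$ comme unique sous-module irr\'eductible d'une induite explicite.

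La partie que l'on pr\'evoit la plus d\'elicate est la manipulation comptable des d\'ecalages par les puissances de $\nu$, afin que les divers exposants se recombinent correctement (par exemple pour faire appara\^itre $\nu^{-ra/2}$ au lieu de $\nu^{-r(a-1)/2}\nu^{-r/2}$, et pour absorber les $\nu^{r/2}$ provenant de $\widetilde{\nu^{-r/2}\chi}$). Apr\`es simplification, on s'attend \`a trouver que $\nu^{-r/2}\theta^\ast_{m-r}(\nu^{-r/2}\pi_1)$ est l'unique sous-module irr\'eductible de $\nu^{-ra/2}\theta^\ast_{m-ra}(\nu^{-ra/2}\rho) \times (\nu^{(m-n)/2}\widetilde{\chi})^{a-1}$, ce qui, combin\'e avec le facteur $\nu^{(m-n)/2}\widetilde{\chi}$ restant, fournira un plongement de $\theta^\ast_m(\pi)$ dans $\nu^{-ra/2}\theta^\ast_{m-ra}(\nu^{-ra/2}\rho) \times (\nu^{(m-n)/2}\widetilde{\chi})^{a}$. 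Il ne restera qu'\`a invoquer \cite[Th\'eor\`eme 5.1]{Min1}, garantissant l'unicit\'e du sous-module irr\'eductible de cette induite, pour conclure $\pi' = \theta^\ast_m(\pi)$.
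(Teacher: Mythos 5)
Your proposal is correct and follows essentially the same route as the paper: induction on $a$, base case by Lemma \ref{rrec}, inductive step via introducing $\pi_1$ (unique irreducible submodule of the $(a-1)$-fold product) and applying Lemma \ref{rrec} together with the induction hypothesis. The paper additionally names the auxiliary representation $\pi'_1$ and identifies it with $\nu^{-r/2}\theta^\ast_{m-r}(\nu^{-r/2}\pi_1)$ before concluding, whereas you compose the embeddings directly; you also spell out the check that the cuspidal-character hypotheses transfer to the pair $(G_{n-r},G'_{m-r})$, a point the paper leaves implicit, but the logical skeleton is identical.
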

\begin{proof}
Par r\'ecurrence sur $a$. Si $a=1$, c'est le lemme \ref{rrec}.
Supposons $a>1$. Notons $\pi_1$ l'unique sous-repr\'esentation irr\'eductible de $\chi \times \dots \times \chi \times \rho$ o\`u on a fait le produit de $a-1$ fois la repr\'esentation $\chi$. Notons $\pi'_1$ l'unique sous-repr\'esentation irr\'eductible de $\nu^{\frac{-ra}{2}}\theta_{m-ra}^\ast(\nu^{\frac{-ra}{2}}\rho) \times \nu ^{\frac{m-n}{2}}\widetilde{\chi} \times \dots \times \nu ^{\frac{m-n}{2}}\widetilde{\chi} ,$ o\`u on a fait le produit de $a-1$ fois la repr\'esentation $\nu ^{\frac{m-n}{2}}\widetilde{\chi}$. Alors, par hypoth\`ese de r\'ecurrence
$$\pi'_1= \nu^{\frac{-r}{2}}\theta_{m-r}^\ast(\nu^{\frac{-r}{2}}\rho)$$
et, de plus $\pi$ est l'unique sous-repr\'esentation irr\'eductible de $\chi \times \pi_1$ et $\pi'$ est l'unique sous-repr\'esentation irr\'eductible de $\pi'_1 \times \nu^{\frac{m-n}{2}}\widetilde{\chi}$. Le r\'esultat d\'ecoule, \`a nouveau, du lemme \ref{rrec}.
\end{proof}

\section{Le cas simple}\label{simple}

Supposons qu'il existe $\chi\neq \begin{cases} \nu ^{\frac{n+1}{2}}\\ \nu ^{\frac{2m-n+1}{2}}\end{cases}$ une repr\'esentation cuspidale de $G_r$ et un entier strictement positif $a$, tels que l'une des conditions suivantes \'equivalentes (par la proposition \ref{calculo1}) soit satisfaite:

\begin{enumerate}
\item Il existe $\rho \in \Irr(G_{n-ra})$ avec 
\begin{equation}\label{rec}
\pi \hookrightarrow \chi \times \chi \times \dots \times \chi \times \rho,
\end{equation}
o\`u on a fait le produit de $a$ fois la repr\'esentation $\chi$, $a$ maximal.
\item ou bien, il existe $\rho' \in \Irr(G'_{m-ra})$ avec 
\begin{equation}\label{rec2}
\pi' \hookrightarrow \rho' \times \nu ^{\frac{m-n}{2}}\widetilde{\chi} \times \dots \times \nu ^{\frac{m-n}{2}}\widetilde{\chi} ,
\end{equation}
o\`u on a fait le produit de $a$ fois la repr\'esentation $\nu ^{\frac{m-n}{2}}\widetilde{\chi}$, $a$ maximal.
\end{enumerate}
Puisque, par la proposition \ref{calculo1},
$$ \Hom \left( \sigma_{n-ra,m-ra}, \nu ^{\frac{-ra}{2}} \rho \otimes\nu ^{\frac{ra}{2}} \rho' \right) \neq0, $$
et $a>0$, on peut supposer, par hypoth\`ese de r\'ecurrence, que 
\begin{equation}\label{rec3}
\rho' =\nu ^{\frac{-ra}{2}} \theta^\ast _{m-ra}\left( \nu ^{\frac{-ra}{2}}\rho\right)\end{equation}

On d\'eduit de \eqref{rec}, \eqref{rec2} et \eqref{rec3}, gr\^ace au corollaire \ref{rrrec}, que $\pi'= \theta^\ast _m(\pi)$.
\section{Sur les param\`etres de Zelevinsky}
Soit $r \in \mathbb{R}, n \in \mathbb{N}^{\ast}$. On dit que la suite $\Delta=\left\{r, r+1,\dots, r+n-1 \right\}$ de nombres r\'eels est un segment et l'ensemble de tous les segments sera not\'e $S$. Il est muni d'une action de $\mathbb{R}$ d\'efinie par $$k \left\{r, r+1,\dots, r+n-1 \right\}=\left\{k+r, k+r+1,\dots, k+r+n-1 \right\}.$$

On notera aussi 
\begin{eqnarray*}
b\left(\left\{r, r+1,\dots, r+n-1 \right\} \right) &=&r , \\ e\left( \left\{r, r+1,\dots, r+n-1 \right\} \right) &=&r+n-1.
\end{eqnarray*}
les extr\'emit\'es du segment. On note $l\left(\left\{r, r+1,\dots, r+n-1 \right\} \right)=n$ sa longueur.

On d\'efinit un pr\'eordre sur $S$ par $\Delta \leq \Delta'$ si $b\left( \Delta \right) \leq b\left( \Delta' \right)$. On note $M(S)$ l'ensemble de multisegments, \textit{i.e,} des fonctions $m:S \rightarrow \mathbb{N}$ \`a support fini (on pensera \`a un ensemble de segments compt\'es avec multiplicit\'es). Un multisegment $\Delta_1, \Delta_2 ,\dots ,\Delta_N$ est dit rang\'e si $\Delta_N \leq \dots \leq \Delta_2 \leq \Delta_1$.

A chaque segment $\Delta =\left\{r, r+1,\dots, r+n-1 \right\}$ on associe une repr\'esentation irr\'eductible de ${\rm GL}_n(D)$, not\'ee $\left< \Delta \right>^t$, d\'efinie comme l'unique quotient irr\'eductible de $\nu^r \times \nu^{r+1} \times \dots \times \nu^{r+n-1}$. La repr\'esentation $\left< \Delta \right>^t$ est essentiellement de carr\'e int\'egrable. La contragr\'ediente $\widetilde{\left< \Delta \right>^t}$ est la repr\'esentation $\left< \widetilde{ \Delta} \right>^t$ o\`u $\widetilde{ \Delta}= \left\{ -r-n+1, -r-n +2, \dots -r \right\}$.

A chaque multisegment $\Delta_1, \Delta_2 ,\dots ,\Delta_N$ on associe une repr\'esentation irr\'eductible de ${\rm GL}_{\sum l\left(\Delta_i\right)}(D)$, not\'ee $\left< \Delta_1, \Delta_2 ,\dots ,\Delta_N \right>^t$, d\'efinie comme l'unique quotient irr\'eductible de $\left< \Delta_{\sigma(1)} \right>^t \times \left< \Delta_{\sigma(2)} \right>^t \times \dots \times \left< \Delta_{\sigma(N)} \right>^t$, o\`u $\sigma$ est une permutation de l'ensemble $\left\{1, \dots , N \right\}$ telle que le multisegment $ \Delta_{\sigma(1)} ,  \Delta_{\sigma(2)}, \dots , \Delta_{\sigma(N)} $ soit rang\'e. La contragr\'ediente $\widetilde{\left< \Delta_1, \Delta_2 ,\dots ,\Delta_N \right>^t}$ est la repr\'esentation $\left< \widetilde{\Delta_1}, \widetilde{\Delta_2} ,\dots ,\widetilde{\Delta_N} \right>^t$.

La proposition suivante, dans le cas $D=F$ est montr\'e dans \cite[6.9.]{Z1}. Dans le cas o\`u $D \neq F$ la preuve est analogue et se trouve dans \cite[2.3.7]{Minthe}. 
\begin{proposition}\label{segm}
Soit $\Delta_1, \Delta_2 ,\dots ,\Delta_N$ un multiensegment rang\'e, avec $\Delta_1 =\left\{b, b+1,\dots, e \right\}$ et $\Delta_N =\left\{b', b'+1,\dots, e' \right\}$, alors 
\begin{enumerate}
\item Si
$\Jac_{\nu^l}\left(\left< \Delta_1, \Delta_2 ,\dots ,\Delta_N \right>^t\right) \neq 0$ on a que $l \geq e'$.
\item Si
$\overline{\Jac}_{\nu^l}\left(\left< \Delta_1, \Delta_2 ,\dots ,\Delta_N \right>^t\right) \neq 0$ on a que $l \leq b$.
\end{enumerate}
\end{proposition}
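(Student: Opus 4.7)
My plan is to prove Proposition~\ref{segm} by induction on $N$, using the Bernstein-Zelevinsky geometric lemma together with the characterizations of $\langle m\rangle^t$ as both the unique irreducible quotient of a ranged induction and the unique irreducible subrepresentation of the reversed induction.

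For the base case $N=1$ with $\Delta=\{b,\ldots,e\}$, the representation $\langle\Delta\rangle^t$ is essentially square-integrable, and a direct computation using Frobenius reciprocity and Casselman's formula \eqref{frobcas} yields
\[r_{1,l-1}^{G_l}(\langle\Delta\rangle^t) = \nu^e\otimes\langle\{b,\ldots,e-1\}\rangle^t,\qquad r_{l-1,1}^{G_l}(\langle\Delta\rangle^t) = \langle\{b+1,\ldots,e\}\rangle^t\otimes\nu^b.\]
Both claims follow at once since $b=b'$ and $e=e'$ in this case.

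For Part 2 in general, I would exploit the surjection $\Pi:=\langle\Delta_1\rangle^t\times\cdots\times\langle\Delta_N\rangle^t\twoheadrightarrow\pi$. By exactness of the Jacquet functor, any irreducible quotient $\rho\otimes\nu^l$ of $r_{n-1,1}^{G_n}(\pi)$ is also a quotient of $r_{n-1,1}^{G_n}(\Pi)$. The geometric lemma, combined with the $N=1$ computation applied factorwise, exhibits $r_{n-1,1}^{G_n}(\Pi)$ as an iterated extension of the $N$ representations
\[F_j = \bigl(\langle\Delta_1\rangle^t\times\cdots\times\langle\{b_j+1,\ldots,e_j\}\rangle^t\times\cdots\times\langle\Delta_N\rangle^t\bigr)\otimes\nu^{b_j},\quad j=1,\ldots,N.\]
So $l=b_j$ for some $j$, and the ranging condition $b_1\geq b_2\geq\cdots\geq b_N$ forces $l\leq b_1=b$.

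For Part 1 the symmetric argument starts from the dual characterization $\pi\hookrightarrow\Pi':=\langle\Delta_N\rangle^t\times\cdots\times\langle\Delta_1\rangle^t$. The geometric lemma identifies the composition factors of $r_{1,n-1}^{G_n}(\Pi')$ as certain $F'_j$ with $GL_1$-component $\nu^{e_j}$, so one obtains $l=e_j$ for some $j$. The main obstacle here is going from ``$l=e_j$ for some $j$'' to ``$l\geq e'$'': a priori any $j$ is allowed, but for the unique irreducible sub $\pi$ only the factors $F'_j$ sitting in the top layer of the Bruhat-ordered filtration can actually give rise to quotients of $r_{1,n-1}^{G_n}(\pi)$. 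Pinpointing this top layer, and verifying that its $e_j$'s are bounded below by $e'$, is the delicate combinatorial step; it is exactly the computation carried out in \cite[6.9]{Z1} for $D=F$ and generalized to arbitrary $D$ in the author's thesis.
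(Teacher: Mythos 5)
The paper itself contains no proof of Proposition~\ref{segm}: immediately after the statement it simply refers to \cite[6.9.]{Z1} for $D=F$ and to \cite[2.3.7]{Minthe} for general $D$. So there is no internal argument to compare against, and your sketch is an attempt to supply what the paper outsources. Your base case (the two maximal Jacquet modules of the essentially square-integrable $\langle\Delta\rangle^t$) is correct, and your treatment of Part~2 is complete: since $\pi$ is a quotient of $\Pi=\langle\Delta_1\rangle^t\times\cdots\times\langle\Delta_N\rangle^t$, any irreducible quotient $\rho\otimes\nu^l$ of $r_{n-1,1}^{G_n}(\pi)$ is a quotient of some graded piece $F_j$ of $r_{n-1,1}^{G_n}(\Pi)$, and the $G_1$-component of $F_j$ is $\nu^{b_j}$, so $l=b_j\le b_1=b$ by the ranging hypothesis. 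This is a clean argument.

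For Part~1 the geometric-lemma step correctly gives $l=e_j$ for some $j$ (one can equivalently run it on the quotient presentation $\Pi\twoheadrightarrow\pi$, which avoids passing through a subrepresentation), but the way you propose to close the gap does not work. There is no general principle saying that an irreducible quotient of the Jacquet module of the unique irreducible subrepresentation of $\Pi'$ must arise from the ``top'' graded piece of the geometric-lemma filtration: a quotient of a submodule of a filtered object can sit at any level of the filtration. What your argument does yield automatically is $l\ge\min_j e_j$, since $l$ is one of the $e_j$. The genuinely nontrivial content --- and the reason the paper defers to \cite[6.9.]{Z1} --- is to identify which $e_j$ actually occur as $\Jac$-exponents of $\pi$ itself rather than of the whole induced module, and to compare that set with $e'=e(\Delta_N)$; note that the ranging is by the $b_j$'s, so $e(\Delta_N)$ is not a priori $\min_j e_j$, which is exactly why this final comparison is the delicate combinatorial step. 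So your sketch is essentially sound in structure, Part~2 is a complete proof, and Part~1 correctly isolates (but does not resolve) the hard step; the ``Bruhat top-layer'' heuristic you offer as the missing idea is not the right mechanism.
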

\section{Fin de la preuve}\label{fin}
On s'est ramen\'e aux cas o\`u $\pi$ et $\pi'$ sont des repr\'esentations tr\`es particuli\`eres, des repr\'esentations v\'erifiant les propri\'et\'es suivantes:
\begin{enumerate}
\item[J.1] Si $\Jac_{\chi}(\pi) \neq 0$ et $\chi$ cuspidale, alors $\chi \in \left\{\nu ^{\frac{n+1}{2}}, \nu ^{\frac{2m-n+1}{2}} \right\}$,
\item[J.2] si $\overline{\Jac}_{\chi}(\pi') \neq 0$ et $\chi$ cuspidale, alors $\chi \in \left\{\nu ^{\frac{-m-1}{2}}, \nu ^{\frac{m-2n-1}{2}} \right\}$.
\end{enumerate}
On va utiliser les propri\'et\'es de la section pr\'ec\'edente pour terminer la preuve du th\'eor\`eme \ref{expl}. On rappelle qu'on suppose $m>n$.

Soit $\pi \in \Irr(G_n)$ telle que $\Jac_{\nu ^{\frac{n+1}{2}}}(\pi) \neq 0$. Soit $a$ maximal tel que $$\pi \hookrightarrow \chi \times \chi \times \dots \times \chi \times \rho,$$ o\`u on a fait le produit de $a$ fois le caract\`ere $\chi=\nu ^{\frac{n+1}{2}}$. On se trouve, avec les notations de la section \ref{application}, dans le cas A. On rappelle que l'on a deux possibilit\'es, A1 et A2:

\underline{\bf Cas A.1}  Ou bien, il existe $\rho' \in \Irr(G'_{m-a})$ avec $$\pi' \hookrightarrow \rho' \times \nu ^{\frac{m-n}{2}}\chi^{-1} \times \dots \times \nu ^{\frac{m-n}{2}}\chi^{-1} ,$$ o\`u on a fait le produit de $a$ fois le caract\`ere $\nu ^{\frac{m-n}{2}}\chi^{-1}$, $a$ maximal et $$\Hom \left( \sigma_{n-a,m-a}, \nu ^{\frac{-a}{2}} \rho \otimes\nu ^{\frac{a}{2}} \rho' \right) \neq 0$$
(proposition \ref{calculo1});

Par hypoth\`ese de r\'ecurrence, on a que:
\begin{equation}\label{premiercas}
\pi' \hookrightarrow \rho' \times \nu ^{\frac{m-2n-1}{2}} \times \dots \times \nu ^{\frac{m-2n-1}{2}}
\end{equation}
o\`u 
\begin{eqnarray*}
\rho'&=&\nu^{\frac{-a}{2}}\theta_{m-a}^\ast(\nu^{\frac{-a}{2}}\rho)\\&=&\left< \nu ^{\frac{m-2n-1}{2}}, \dots, \nu ^{\frac{-m+1}{2}} ,\nu^{\frac{m-n}{2}} \widetilde{\Delta_1}, \dots, \nu^{\frac{m-n}{2}} \widetilde{\Delta_N}\right>^t \end{eqnarray*}
si $\rho=\left< \Delta_1, \dots, \Delta_N\right>^t$, avec les notations de la section pr\'ec\'edente.

\begin{lemma}
Il n'existe pas des repr\'esentations irr\'eductibles $\pi$ et $\pi'$ satisfaisant aux conditions J.1, J.2 et \eqref{premiercas}.
\end{lemma}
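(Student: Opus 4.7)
The plan is to reach a contradiction with property J.2 by exhibiting a cuspidal character $\chi=\nu^{\ell}$ such that $\overline{\Jac}_{\chi}(\pi')\neq 0$ yet $\ell\notin\left\{\tfrac{-m-1}{2},\tfrac{m-2n-1}{2}\right\}$. The starting point is the explicit multisegment description of $\rho'$ given just above: $\mathfrak{m}_{\rho'}$ consists of the $m-n$ consecutive singletons $\left\{\nu^{(m-2n-1)/2}\right\},\left\{\nu^{(m-2n-3)/2}\right\},\dots,\left\{\nu^{(-m+1)/2}\right\}$ together with the shifted duals $\nu^{(m-n)/2}\widetilde{\Delta_{i}}$. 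By \cite[Th.~5.1]{Min1} (already invoked in the proof of Theorem \ref{uni}), $\pi'$ is the unique irreducible subrepresentation of the induced appearing in \eqref{premiercas}, so its Zelevinsky multisegment $\mathfrak{m}_{\pi'}$ is obtained from $\mathfrak{m}_{\rho'}$ by absorbing the $a$ new singletons $\left\{\nu^{(m-2n-1)/2}\right\}$, each of which either persists as a singleton or extends by one step on the right an existing segment of $\mathfrak{m}_{\rho'}$ ending at $\nu^{(m-2n-3)/2}$.

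The key step would then be a direct application of Proposition \ref{segm}.(2) to $\pi'$. Assuming temporarily that $m-n\geq 2$, the singleton $\left\{\nu^{(m-2n-3)/2}\right\}$ in $\mathfrak{m}_{\rho'}$ either survives unchanged in $\mathfrak{m}_{\pi'}$ or is enlarged into the length-two segment $\left\{(m-2n-3)/2,(m-2n-1)/2\right\}$; in either case its beginning equals $(m-2n-3)/2$. Putting $\mathfrak{m}_{\pi'}$ in ranged form and using the combinatorial description of Jacquet modules underlying Proposition \ref{segm}, one deduces $\overline{\Jac}_{\nu^{(m-2n-3)/2}}(\pi')\neq 0$. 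Since $(m-2n-3)/2=(m-2n-1)/2$ never holds and $(m-2n-3)/2=-(m+1)/2$ is equivalent to $m=n+1$, this contradicts J.2 in every case except $m=n+1$.

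The main obstacle comes in two flavours. First, one must check that the shifted duals $\nu^{(m-n)/2}\widetilde{\Delta_{i}}$ cannot interfere with the chain of intermediate singletons, for example by absorbing $\left\{\nu^{(m-2n-3)/2}\right\}$ into a longer segment and displacing its effective beginning in the ranged form. Here I would use property J.1 on $\pi$ together with the maximality of $a$: by Proposition \ref{segm}.(1) applied to $\rho=\langle\Delta_{1},\dots,\Delta_{N}\rangle^{t}$, the endpoints $e(\Delta_{i})$ are forced to lie in $\left\{(n-1)/2,(2m-n-1)/2\right\}$, and the corresponding beginnings $(m-n)/2-e(\Delta_{i})$ of the shifted duals then fall clear of the window around $(m-2n-3)/2$. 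Second, the genuinely delicate case is $m=n+1$, where the two extremal singletons of $\mathfrak{m}_{\rho'}$ coincide exactly with the values forbidden by J.2; I would treat this by a direct low-rank analysis, arguing that the assumption $\Jac_{\nu^{(n+1)/2}}(\pi)\neq 0$ together with J.1 forces $\rho$ to contribute at least one non-trivial $\Delta_{i}$, whose shifted dual $\nu^{(m-n)/2}\widetilde{\Delta_{i}}$ produces in $\mathfrak{m}_{\pi'}$ a beginning distinct from $-(m+1)/2$ and $(m-2n-1)/2$, restoring the contradiction.
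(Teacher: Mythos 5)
Your strategy is genuinely different from the paper's and, as written, it has a gap at the crucial step. The paper does not try to contradict condition J.2: it shows that all segments of $\rho'$ begin at $\leq \tfrac{m-2n-1}{2}$ and that $\bigl\{\tfrac{m-2n-1}{2}\bigr\}$ is one of the segments, so that \cite[Th\'eor\`eme~6.6.(3)]{Min1} gives an embedding $\rho' \hookrightarrow \tau' \times \nu^{\frac{m-2n-1}{2}}$; combined with \eqref{premiercas} this yields $\pi' \hookrightarrow \tau' \times \bigl(\nu^{\frac{m-2n-1}{2}}\bigr)^{a+1}$, contradicting the maximality of $a$ (as imposed through Proposition~\ref{calculo1}). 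There is no case distinction on $m-n$.

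The gap in your argument is the assertion that $\overline{\Jac}_{\nu^{(m-2n-3)/2}}(\pi')\neq 0$. Proposition~\ref{segm}.(2) is only a \emph{necessary} condition on $\ell$ for $\overline{\Jac}_{\nu^{\ell}}(\pi')\neq 0$; it tells you $\ell\leq b(\Delta_1')$, and does not produce a nonzero Jacquet module at the beginning of a segment that is not in first position in the ranged form. In fact your conclusion is false in the very situation you describe: if both $\bigl\{\tfrac{m-2n-3}{2}\bigr\}$ and $\bigl\{\tfrac{m-2n-1}{2}\bigr\}$ appear as singletons in $\mathfrak{m}_{\pi'}$ they are linked, and then (already in the basic example $\pi'=\langle \{0\},\{1\}\rangle^{t}$, which is a character of $G_2$) one has $\overline{\Jac}_{\nu^{0}}(\pi')=0$ even though $\{0\}$ is a segment with beginning $0$. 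So the singleton $\bigl\{\tfrac{m-2n-3}{2}\bigr\}$ surviving in $\mathfrak{m}_{\pi'}$ does not give you the embedding $\pi' \hookrightarrow \tau' \times \nu^{\frac{m-2n-3}{2}}$ that you need. Without that, there is no contradiction with J.2, and the rest of the argument (including the separate $m=n+1$ case) never gets off the ground. You should instead aim, as the paper does, at the maximality of $a$: once you know every segment of $\rho'$ begins at $\leq \tfrac{m-2n-1}{2}$ and that $\bigl\{\tfrac{m-2n-1}{2}\bigr\}$ is a segment of $\rho'$, \cite[Th\'eor\`eme~6.6.(3)]{Min1} produces the forbidden extra copy of $\nu^{\frac{m-2n-1}{2}}$ directly.
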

\begin{proof}
D'apr\`es J.1, on sait que, si $\Jac_{\chi}(\pi) \neq 0$ et $\chi$ cuspidale, alors $\chi \in \left\{\nu ^{\frac{n+1}{2}}, \nu ^{\frac{2m-n+1}{2}} \right\}$. Par \ref{segm}.(1), tous les segments de $\rho$ finissent alors par $e_i \geq \frac{n+1}{2}$. Ainsi, par d\'efinition de $\rho'$, tous les segments de $\rho'$ commencent alors par $b'_i \leq \frac{m-2n-1}{2}$. De plus, puisque $m \neq n$, $\left\{\nu ^{\frac{m-2n-1}{2}} \right\}$ est un segment de $\rho'$ et donc, par \cite[Th\'eor\`eme 6.6.(3)]{Min1}, il existe $\tau' \in \Irr(G'_{m-a-1})$  tel que $\rho' \hookrightarrow \tau' \times \nu ^{\frac{m-2n-1}{2}}$ ce qui contredit la maximalit\'e de $a$.
\end{proof}

\underline{\bf Cas A.2} Alors on a bien (avec les notations de \ref{calculo1}) $$\Hom\left( \overline{\tau'}_{b-a-1} ,\nu ^{\frac{m-n+a}{2}}\widetilde{\chi}\times \dots \times \nu ^{\frac{m-n+a}{2}}\widetilde{\chi} \otimes\nu ^{\frac{a}{2}} \rho' \right)\neq 0. $$ Dans ce cas on a une proposition similaire \`a la proposition \ref{calculo1}
\begin{proposition}
Soient $\pi \in \Irr G_n$, $\pi' \in \Irr G'_m$ telles que $\pi \otimes \pi'$ soit un quotient de $\sigma_{n,m}$ satisfaisant aux conditions J.1 et J.2. Soit aussi $\chi= \nu ^{\frac{n+1}{2}}$ un caract\`ere de $G_1$. Alors $a=b-1$ o\`u $a$ et $b$ sont d\'efinis par les conditions suivantes:
\begin{enumerate}
\item Il existe $\rho \in \Irr(G_{n-a})$ avec $$\pi \hookrightarrow \chi \times \chi \times \dots \times \chi \times \rho,$$ o\`u on a fait le produit de $a$ fois le caract\`ere $\chi$ et $a$ est maximal.
\item Il existe $\rho' \in \Irr(G'_{m-b})$ avec $$\pi' \hookrightarrow \rho' \times \nu ^{\frac{m-n}{2}}\widetilde{\chi} \times \dots \times \nu ^{\frac{m-n}{2}}\widetilde{\chi} ,$$ o\`u on a fait le produit de $b$ fois le caract\`ere $\nu ^{\frac{m-n}{2}}\widetilde{\chi}$ et $b$ est maximal.
\end{enumerate}
De plus, on a
$$\Hom \left( \sigma_{n-a,m-a-1}, \nu ^{\frac{-a}{2}-1} \rho \otimes\nu ^{\frac{a+1}{2}} \rho' \right) \neq 0.$$
\end{proposition}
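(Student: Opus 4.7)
Je vais suivre le m\^eme sch\'ema que celui employ\'e pour la proposition \ref{calculo1} et le lemme \ref{444}, en adaptant les indices au cas A.2: au lieu de $\overline{\tau'}_{b-a}$, c'est $\overline{\tau'}_{b-a-1}$ qui intervient ici (on a $r=1$ puisque $\chi$ est un caract\`ere). L'hypoth\`ese de d\'epart est
$$\Hom\left(\overline{\tau'}_{b-a-1},\nu^{\frac{m-n+a}{2}}\widetilde{\chi}\times\dots\times\nu^{\frac{m-n+a}{2}}\widetilde{\chi}\otimes\nu^{\frac{a}{2}}\rho'\right)\neq 0.$$

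D'abord, j'expliciterai $\overline{\tau'}_{b-a-1}$ par la proposition \ref{lema2}, appliqu\'ee avec $t=b$ et $i=b-a-1$: c'est l'induite parabolique non normalis\'ee de la repr\'esentation $\sigma_{n-(b-a-1),m-b}\otimes\rho_{b-a-1}\otimes\overline{\xi'}_{b,b-a-1}$, o\`u le caract\`ere $\overline{\xi'}_{b,b-a-1}$ se calcule explicitement composante par composante sur les facteurs de L\'evi. Par les r\'eciprocit\'es de Frobenius \eqref{frob} et \`a la Casselman \eqref{frobcas}, et en utilisant que $\rho_{b-a-1}$ est la repr\'esentation r\'eguli\`ere de $G_{b-a-1}\times G'_{b-a-1}$ (ce qui fournit un appariement naturel entre $\tau$ et $\widetilde{\tau}$), l'hypoth\`ese se traduira par l'existence d'un entrelacement entre une certaine induite parabolique, obtenue en combinant $\sigma_{n-b+a+1,m-b}$ et les copies de $\chi$, et $\nu^{-\frac{a}{2}}\rho\otimes\nu^{\frac{a}{2}}\rho'$.

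La deuxi\`eme \'etape est un argument de maximalit\'e parall\`ele \`a celui du lemme \ref{444}. En appliquant le lemme g\'eom\'etrique \cite[\textsection 1.6]{Z1} et le corollaire \ref{cambio}, on verra que si $b-a-1>0$ alors $\pi$ serait sous-module d'une induite comportant strictement plus de $a$ copies du caract\`ere $\chi=\nu^{\frac{n+1}{2}}$, ce qui contredirait la maximalit\'e de $a$. Ainsi $b-a-1=0$, c'est-\`a-dire $a=b-1$.

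Enfin, une fois $b-a-1=0$ impos\'e, le facteur provenant de $\rho_{b-a-1}$ dispara\^it et l'induite parabolique restante se r\'eduit essentiellement \`a $\sigma_{n-a,m-a-1}$; en rassemblant les twists issus du caract\`ere $\overline{\xi'}_{b,b-a-1}$ avec ceux pr\'esents dans l'hypoth\`ese de d\'epart, je retrouverai l'entrelacement
$$\Hom\left(\sigma_{n-a,m-a-1},\nu^{-\frac{a}{2}-1}\rho\otimes\nu^{\frac{a+1}{2}}\rho'\right)\neq 0.$$
L'obstacle principal r\'eside dans le suivi minutieux des caract\`eres et des puissances de $\nu$; en particulier, la co\"incidence num\'erique $\nu^{\frac{m-n}{2}}\widetilde{\chi}=\nu^{\frac{m-2n-1}{2}}$ qui caract\'erise pr\'ecis\'ement le cas A.2 doit s'imbriquer exactement avec les composantes de $\overline{\xi'}_{b,b-a-1}$ pour produire le d\'ecalage additionnel de $-\frac{1}{2}$ dans la somme des exposants des twists de $\rho$ et $\rho'$ (\`a comparer au cas A.1 o\`u cette somme est nulle).
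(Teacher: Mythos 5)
Votre plan suit la m\^eme strat\'egie que la preuve du texte: expliciter $\overline{\tau'}_{b-a-1}$ via la proposition~\ref{lema2}, utiliser l'appariement naturel fourni par $\rho_{b-a-1}$ pour transformer les copies de $\nu^{\frac{m-n}{2}}\widetilde{\chi}$ c\^ot\'e $G'$ en copies de (twist de) $\chi$ c\^ot\'e $G$, puis conclure par maximalit\'e de $a$ exactement comme dans le lemme~\ref{444}. C'est correct en substance.

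Il y a toutefois une erreur d'indices qu'il faut signaler, car elle est incompatible avec votre propre conclusion. Vous appliquez la proposition~\ref{lema2} \og avec $t=b$ et $i=b-a-1$ \fg{} \`a la paire $(n,m)$, ce qui donnerait $\sigma_{n-(b-a-1),\,m-b}$; or, dans le sch\'ema de la section~\ref{application}, la filtration de Kudla c\^ot\'e $G'$ s'applique au foncteur $\overline{r}_{b-a,\,m-b}^{G'_{m-a}}$ de $\sigma_{n-a,m-a}$ et non de $\sigma_{n,m}$ (les $a$ copies de $\chi$ ont d\'ej\`a \'et\'e \og \'epuis\'ees \fg{} c\^ot\'e $G$ par le premier foncteur de Jacquet). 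Il faut donc appliquer la proposition~\ref{lema2} avec les param\`etres $(n-a,m-a)$, $t=b-a$, $i=b-a-1$, ce qui donne $\sigma_{(n-a)-(b-a-1),\,(m-a)-(b-a)}=\sigma_{n-b+1,\,m-b}$. Apr\`es $b=a+1$, c'est bien $\sigma_{n-a,\,m-a-1}$, le r\'esultat que vous annoncez; mais vos propres indices $\sigma_{n-(b-a-1),m-b}$ donneraient $\sigma_{n,\,m-a-1}$ apr\`es $b=a+1$, ce qui est faux. Le reste du plan (d\'efinition de $\overline{\tau'}_{b-a-1}$ par l'induite non normalis\'ee de $\sigma\otimes\rho_{b-a-1}\otimes\overline{\xi'}$, traduction par les r\'eciprocit\'es~\eqref{frob} et~\eqref{frobcas}, maximalit\'e de $a$, puis suivi des twists donnant le d\'ecalage suppl\'ementaire de $-\tfrac{1}{2}$ par rapport au cas~A.1) est conforme \`a la preuve du texte.
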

\begin{proof}
On a que
\begin{eqnarray*}\Hom \Big( \!\!& \ind_{P'_{a,b-a}}^{G_{b}}&\left(\nu ^{\frac{m-n}{2}}\widetilde{\chi} \times \dots \times \nu ^{\frac{m-n}{2}}\widetilde{\chi} \otimes \nu ^{\frac{a}{2}}\overline{\tau'}_{b-a-1}\nu ^{\frac{-a}{2}}\right),
\\ && \rho \otimes \nu ^{\frac{m-n}{2}}\widetilde{\chi}\times \dots \times \nu ^{\frac{m-n}{2}}\widetilde{\chi}\otimes \rho' \Big)\neq 0 ,
\end{eqnarray*}
d'o\`u, par d\'efinition de $\overline{\tau'}_{b-a-1}$,
\begin{eqnarray*}\Hom \Big( \!\!& \ind_{P_{n-b-1,b-a-1}}^{G_{n-a}}&\left( \nu ^{\frac{b+1}{2}}\sigma_{n-b+1,m-b}\nu ^{\prime \frac{-b}{2}}\otimes \nu ^{\frac{-a}{2}}\chi \times \dots \times \nu ^{\frac{-a}{2}}\chi \right),
\\ && \rho \otimes \rho' \Big)\neq 0, 
\end{eqnarray*}
o\`u on a fait le produit de $b-a-1$ fois le caract\`ere $ \nu ^{\frac{-a}{2}} \chi $, et donc, par maximalit\'e de $a$, on a $b=a+1$.

\end{proof}

Ainsi, il existe $\rho' \in \Irr(G'_{m-a-1})$ avec $$\pi' \hookrightarrow \rho' \times \nu ^{\frac{m-n}{2}}\chi^{-1} \times \dots \times \nu ^{\frac{m-n}{2}}\chi^{-1} ,$$ o\`u on a fait le produit de $a+1$ fois le caract\`ere $\nu ^{\frac{m-n}{2}}\chi^{-1}$, $a$ maximal et $$\Hom \left( \sigma_{n-a,m-a-1}, \nu ^{\frac{-a}{2}-1} \rho \otimes\nu ^{\frac{a+1}{2}} \rho' \right) \neq 0.$$
Alors, par hypoth\`ese de r\'ecurrence on a 
\begin{equation*}
\dim \left( \Hom \left( \sigma_{n-a,m-a-1}, \nu ^{\frac{-a}{2}-1} \rho \otimes\nu ^{\frac{a+1}{2}} \rho' \right) \right)=1,\end{equation*}
et
\begin{eqnarray*}
 \rho'&=&\nu^{\frac{-a-1}{2}}\theta_{m-a}^\ast(\nu^{\frac{-a}{2}-1}\rho)\\&=& \left< \nu ^{\frac{m-2n-3}{2}}, \dots, \nu ^{\frac{-m+1}{2}} ,\nu^{\frac{m-n}{2}} \widetilde{\Delta_1}, \dots, \nu^{\frac{m-n}{2}} \widetilde{\Delta_N}\right>^t.
\end{eqnarray*}
si $\rho=\left< \Delta_1, \dots, \Delta_N\right>^t$.
\begin{lemma}
Supposons que $\pi$ et $\pi'$ sont deux repr\'esentation irr\'eductibles qui satisfont aux conditions J.1, J.2 et telles que $\pi$ est un sous-module de $\nu ^{\frac{n+1}{2}} \times \dots \times \nu ^{\frac{n+1}{2}} \times \rho$, o\`u on a fait le produit de $a$ fois le caract\`ere $\nu ^{\frac{n+1}{2}}$, $\pi'$ est un sous-module de $ \rho' \times \nu ^{\frac{m-2n-1}{2}} \times \dots \times \nu ^{\frac{m-2n-1}{2}}$ o\`u on a fait le produit de $a+1$ fois le caract\`ere $\nu ^{\frac{m-2n-1}{2}}\chi^{-1}$ et 
$$\rho'=\left< \nu ^{\frac{m-2n-3}{2}}, \dots, \nu ^{\frac{-m+1}{2}} ,\nu^{\frac{m-n}{2}} \widetilde{\Delta_1}, \dots, \nu^{\frac{m-n}{2}} \widetilde{\Delta_N}\right>^t$$
si $\rho=\left< \Delta_1, \dots, \Delta_N\right>^t$.

Alors $\pi'=\theta_{m}^\ast(\pi)$.

\end{lemma}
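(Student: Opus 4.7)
Mon plan est de d\'eterminer explicitement les multisegments de Zelevinsky de $\pi$ et $\pi'$ \`a partir des immersions donn\'ees, puis de v\'erifier par comparaison directe qu'ils co\"incident avec celui de $\theta_m^\ast(\pi)$.

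D'abord, j'\'ecrirai $\pi$ en notation de Zelevinsky. \'Etant donn\'e que $\pi \hookrightarrow \chi \times \dots \times \chi \times \rho$ avec $\chi = \nu^{\frac{n+1}{2}}$ ($a$ facteurs) et $\rho = \left< \Delta_1, \dots, \Delta_N \right>^t$, j'utiliserai les r\`egles combinatoires d\'ecrivant les sous-modules irr\'eductibles (\cite[Th\'eor\`eme 6.6]{Min1}), conjugu\'ees \`a la proposition \ref{segm} et \`a la condition J.1, pour \'etablir que chacune des $a$ copies de $\chi$ \'etend un (et un seul) segment $\Delta_{i_j}$ \`a gauche et que la maximalit\'e de $a$ garantit qu'aucune ne reste comme segment singleton. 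Le multisegment de $\pi$ sera donc $\Gamma_1, \dots, \Gamma_N$ avec $\Gamma_{i_j} = \{\chi\} \cup \Delta_{i_j}$ pour $j=1,\dots,a$ et $\Gamma_i = \Delta_i$ sinon.

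Ensuite, par d\'efinition, $\theta_m^\ast(\pi)$ a pour param\`etres de Zelevinsky les $m-n$ segments singletons $\{\nu^{r}\}$ pour $r = -\frac{m-1}{2}, \dots, \frac{m-2n-1}{2}$, compl\'et\'es par les segments d\'ecal\'es $\nu^{\frac{m-n}{2}}\widetilde{\Gamma_1}, \dots, \nu^{\frac{m-n}{2}}\widetilde{\Gamma_N}$. Comme $\nu^{\frac{m-n}{2}}\widetilde{\chi} = \nu^{\frac{m-2n-1}{2}}$ et $\nu^{\frac{m-n}{2}}\widetilde{\Gamma_{i_j}} = \nu^{\frac{m-n}{2}}\widetilde{\Delta_{i_j}} \cup \{\nu^{\frac{m-2n-1}{2}}\}$, passer du multisegment de $\rho'$ \`a celui de $\theta_m^\ast(\pi)$ revient \`a ajouter $a$ copies de $\nu^{\frac{m-2n-1}{2}}$ \`a la fin des segments $\nu^{\frac{m-n}{2}}\widetilde{\Delta_{i_j}}$ et une copie suppl\'ementaire comme nouveau segment singleton, soit exactement $a+1$ copies au total.

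L'obstacle principal sera d'\'etablir rigoureusement cette correspondance combinatoire bijective et de confirmer que le multisegment ainsi d\'ecrit correspond bien \`a l'unique sous-module irr\'eductible de $\rho' \times \nu^{\frac{m-2n-1}{2}} \times \dots \times \nu^{\frac{m-2n-1}{2}}$. Pour cela, j'invoquerai encore les r\`egles de \cite[Th\'eor\`eme 6.6]{Min1} et la proposition \ref{segm}, ainsi que la condition J.2 (qui limite les caract\`eres cuspidaux $\chi'$ tels que $\overline{\Jac}_{\chi'}(\pi') \neq 0$) pour exclure toute configuration parasite. L'unicit\'e du sous-module irr\'eductible (\cite[Th\'eor\`eme 5.1]{Min1}) permettra alors de conclure que $\pi' = \theta_m^\ast(\pi)$.
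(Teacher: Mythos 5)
Ta strat\'egie (calculer explicitement le multisegment de $\pi$ \`a partir de celui de $\rho$, en d\'eduire celui de $\theta_m^\ast(\pi)$, puis confronter \`a $\pi'$) est une route diff\'erente de celle de l'article : l'article ne relie jamais directement les param\`etres de $\pi$ \`a ceux de $\rho$. Il introduit $\pi'_1$, l'unique sous-module irr\'eductible de $\rho' \times \nu^{\frac{m-2n-1}{2}} \times \dots \times \nu^{\frac{m-2n-1}{2}}$ ($a$ facteurs), identifie $\pi'_1$ \`a $\nu^{-a/2}\theta^\ast_{m-a-1}(\nu^{-a/2}\pi)$ gr\^ace au corollaire \ref{rrrec}, puis ajoute le dernier facteur $\nu^{\frac{m-2n-1}{2}}$ en invoquant J.1, la proposition \ref{segm}(1) (tous les segments de $\pi$ finissent en $\geq \frac{n+1}{2}$, donc tous ceux de $\pi'_1$ commencent en $\leq \frac{m-2n-1}{2}$) et \cite[Th\'eor\`eme 6.6(2)]{Min1}. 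L'avantage est que l'on ne manipule \`a aucun moment le passage $\rho \rightsquigarrow \pi$.

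Or c'est pr\'ecis\'ement l\`a que ta proposition pr\'esente une lacune r\'eelle. L'affirmation \og chacune des $a$ copies de $\chi$ \'etend un (et un seul) segment $\Delta_{i_j}$ \fg\ avec \og la maximalit\'e de $a$ garantit qu'aucune ne reste comme segment singleton \fg\ est fausse en g\'en\'eral. Prends par exemple $n=2$, $\chi=\nu^{3/2}$, $\pi=\nu^{3/2}\times\nu^{3/2}$ (irr\'eductible), $\rho=1_0$ et $a=2$ (maximal). Alors le multisegment de $\rho$ est vide tandis que celui de $\pi$ contient deux singletons $\{\nu^{3/2}\}$ : aucune des deux copies de $\chi$ n'\'etend un segment de $\rho$, et l'hypoth\`ese implicite $N(\pi)=N(\rho)$ \'echoue. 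La maximalit\'e de $a$ n'emp\^eche nullement les singletons d'appara\^itre (elle dit seulement que $\rho$ n'admet plus de $\Jac_{\chi}$ non nul, ce qui est compatible avec la disparition compl\`ete de segments). Ton sch\'ema combinatoire d\'ecrivant $\theta_m^\ast(\pi)$ en fonction de $\rho'$ s'effondre donc d\`es ce point, et la \og correspondance bijective \fg\ que tu cherches \`a \'etablir ensuite n'existe pas sous la forme annonc\'ee. Par ailleurs, la convention de $\Jac_\chi$ dans l'article (voir \ref{segm}(1)) fait que $\chi$ est pel\'e \`a l'extr\'emit\'e $e$ d'un segment de $\pi$, pas \`a son d\'ebut ; la formulation \og \`a gauche \fg\ puis \og \`a la fin des segments $\nu^{\frac{m-n}{2}}\widetilde{\Delta_{i_j}}$ \fg\ va dans le mauvais sens par rapport \`a \ref{segm} et \`a la normalisation $\widetilde{\Delta}$ de l'article, m\^eme si l'erreur se compense formellement.

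Le point structurel que tu ne peux pas contourner est que cette situation sort justement du cadre des corollaires \ref{rrec} et \ref{rrrec} parce que $\chi=\nu^{\frac{n+1}{2}}$ est exclu de leurs hypoth\`eses : la relation entre les multisegments de $\pi$ et de $\rho$ n'y est plus contr\^ol\'ee de mani\`ere uniforme. La preuve de l'article \'evite d'avoir \`a la d\'ecrire en travaillant directement sur $\pi$ et sur $\pi'_1$ via l'hypoth\`ese de r\'ecurrence, r\'eservant l'argument combinatoire (\cite[Th\'eor\`eme 6.6(2)]{Min1}) au seul ajout du dernier facteur singleton, pour lequel l'\'enonc\'e \og tous les segments de $\pi'_1$ commencent en $\leq \frac{m-2n-1}{2}$ \fg\ est vrai m\^eme en pr\'esence de singletons. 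C'est cette d\'ecomposition en \og $a$ facteurs trait\'es par r\'ecurrence $+$ un dernier facteur trait\'e \`a la main \fg\ qui manque dans ton plan.
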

\begin{proof}
Notons $\pi'_1$ l'unique sous-repr\'esentation irr\'eductible de 
\begin{eqnarray*}
\rho' \times \nu ^{\frac{m-2n-1}{2}} \times \dots \times \nu ^{\frac{m-2n-1}{2}},\end{eqnarray*}
o\`u on a fait le produit de $a$ fois le caract\`ere $\nu ^{\frac{m-2n-1}{2}}$, alors, d'apr\`es le corollaire \ref{rrrec}, on a que 
\begin{eqnarray*}
\pi'_1&=&\nu^{\frac{-a}{2}}\theta_{m-a-1}^\ast(\nu^{\frac{-a}{2}}\pi)\\&=&\left< \nu ^{\frac{m-2n-3}{2}}, \dots, \nu ^{\frac{-m+1}{2}} ,\nu^{\frac{m-n}{2}} \widetilde{\Delta'_1}, \dots, \nu^{\frac{m-n}{2}} \widetilde{\Delta'_N}\right>^t\end{eqnarray*} si $\pi=\left< \Delta'_1, \dots, \Delta'_N\right>^t$.

De plus, $\pi'$ est l'unique sous-repr\'esentation irr\'eductible de $\pi'_1 \times \nu ^{\frac{m-2n-1}{2}}$. Voyons finalement que $\pi'=\theta_{m}^\ast(\pi)$.

Puisque d'apr\`es J.1, on sait que, si $\Jac_{\chi}(\pi) \neq 0$ et $\chi$ cuspidale, alors $\chi \in \left\{\nu ^{\frac{n+1}{2}}, \nu ^{\frac{2m-n+1}{2}} \right\}$. Par la proposition \ref{segm}(1), tous les segments de la forme $\Delta'$ finissent alors par $e_i \geq \frac{n+1}{2}$. Ainsi, tous les segments de la forme $\nu^{\frac{m-n}{2}} \widetilde{\Delta'_i}$ commencent par $b'_i \leq \frac{m-2n-1}{2}$ et, a fortiori, tous les segments de $\pi'_1$ commencent par $b'_i \leq \frac{m-2n-1}{2}$.

Ainsi, par \cite[Th\'eor\`eme 6.6(.2)]{Min1}, on a  que
$$\pi'=\left< \nu ^{\frac{m-2n-1}{2}},  \nu ^{\frac{m-2n-3}{2}}, \dots, \nu ^{\frac{-m+1}{2}} ,\nu^{\frac{m-n}{2}} \widetilde{\Delta_1}, \dots, \nu^{\frac{m-n}{2}} \widetilde{\Delta_N}\right>^t=\theta_{m}^\ast(\pi) .
$$
\end{proof}

Il ne nous reste maintenant \`a traiter que les cas o\`u $\pi$ et $\pi'$ sont des repr\'esentations v\'erifiant les propri\'et\'es suivantes:
\begin{enumerate}
\item[H.1] Si $\Jac_{\chi}(\pi) \neq 0$ et $\chi$ cuspidale, alors $\chi = \nu ^{\frac{2m-n+1}{2}}$,
\item[H.2] si $\overline{\Jac}_{\chi}(\pi') \neq 0$ et $\chi$ cuspidale, alors $\chi =\nu ^{\frac{-m-1}{2}}$.
\end{enumerate}

\begin{proposition}
Il n'existe pas de repr\'esentations irr\'eductibles $\pi\in \Irr(G_n)$ et $\pi'\in\Irr(G'_m)$, $m>n$, satisfaisant aux conditions H.1 et H.2 et telles que $\Hom(\sigma_n, \pi \otimes \pi')\neq 0$
\end{proposition}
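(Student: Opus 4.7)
Le plan est de raisonner par l'absurde: supposons l'existence d'un tel couple $(\pi,\pi')$. D'apr\`es la preuve du th\'eor\`eme \ref{primerhowe}, $\pi'$ est l'unique quotient irr\'eductible de l'induite normalis\'ee $\nu^{-n/2}1_{m-n}\times\nu^{(m-n)/2}\widetilde{\pi}$. Comme $m-n\geq 1$, cette induction parabolique est propre, donc $\pi'$ ne peut pas \^etre cuspidale; il existe alors une repr\'esentation cuspidale $\chi$ avec $\overline{\Jac}_\chi(\pi')\neq 0$, et la condition H.2 force $\chi=\nu^{-(m+1)/2}$. En particulier, $\nu^{-(m+1)/2}$ appartient au support cuspidal de $\pi'$, lequel est l'union de celui de $\nu^{-n/2}1_{m-n}$ -- \`a savoir les caract\`eres $\nu^s$ pour $s\in\{-(m-1)/2,\dots,(m-2n-1)/2\}$ -- et du $\nu^{(m-n)/2}$-translat\'e du support de $\widetilde{\pi}$. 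Puisque $-(m+1)/2<-(m-1)/2$, ce caract\`ere doit provenir de la seconde partie, ce qui impose que $\nu^{(2m-n+1)/2}$ figure dans le support cuspidal de $\pi$.

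On traite d'abord le cas o\`u $\pi$ est cuspidale. Si $n\geq 2$, le support cuspidal de $\pi$ est r\'eduit \`a une unique cuspidale de $G_n$, qui ne peut \^etre le caract\`ere $\nu^{(2m-n+1)/2}$ de $G_1$, d'o\`u une contradiction imm\'ediate. Si $n=1$, la contrainte force $\pi=\nu^m$; un calcul direct montre alors que $\pi'=\nu^{-1}1_m$, et comme cette repr\'esentation se plonge comme sous-repr\'esentation de $\nu^{-(m+1)/2}\times\nu^{-(m-1)/2}\times\dots\times\nu^{(m-3)/2}$, la proposition \ref{segm}(2) donne $\overline{\Jac}_{\nu^{(m-3)/2}}(\pi')\neq 0$, avec $(m-3)/2\neq-(m+1)/2$ puisque $m>1$, ce qui contredit H.2.

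Reste le cas o\`u $\pi$ n'est pas cuspidale. En appliquant H.1 \`a des cuspidales de rang $r>1$ (qui ne peuvent co\"incider avec le caract\`ere $\nu^{(2m-n+1)/2}$), on voit que le support cuspidal de $\pi$ -- et donc celui de $\pi'$ -- n'est form\'e que de caract\`eres, ce qui permet d'appliquer directement la proposition \ref{segm}. \'Ecrivant $\pi'=\langle\Delta'_1,\dots,\Delta'_{N'}\rangle^t$ rang\'e, la combinaison de H.2 et de \ref{segm}(2) impose $b(\Delta'_1)=-(m+1)/2$; la pr\'esence du caract\`ere $\nu^{(m-2n-1)/2}$ dans le support cuspidal force alors l'existence d'un segment $\Delta'_i$ de longueur au moins $m-n+1$, contenant tous les caract\`eres $\nu^s$ pour $s\in\{-(m+1)/2,\dots,(m-2n-1)/2\}$. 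L'obstacle principal sera de montrer, en s'appuyant sur la filtration de Kudla du c\^ot\'e $G'_m$ (proposition \ref{lema2}) ainsi que sur la structure de Zelevinsky de $\pi$ (dont le dernier segment finit par $\nu^{(2m-n+1)/2}$ gr\^ace \`a H.1 et \ref{segm}(1)), que cette configuration impose n\'ecessairement un second segment $\Delta'_j$ de $\pi'$ v\'erifiant $b(\Delta'_j)>-(m+1)/2$, contredisant la maximalit\'e de $b(\Delta'_1)$ et donc H.2.
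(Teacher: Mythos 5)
Your approach differs substantially from the paper's: you try to compute the Zelevinsky parameters of $\pi'$ directly from Theorem \ref{primerhowe} (using that $\pi'$ is the unique irreducible quotient of $\nu^{-n/2}1_{m-n}\times\nu^{(m-n)/2}\widetilde{\pi}$) and reason about cuspidal supports, whereas the paper stays inside the machinery of Section \ref{application}: it splits into Cases A and B of the Kudla filtration, applies Proposition \ref{calculo1} (or its Case-B analogue), invokes the inductive hypothesis to get the explicit multisegment of $\rho'$, and then reads off a segment of $\pi'$ starting strictly above $-\tfrac{m+1}{2}$, contradicting H.2 via Proposition \ref{segm}(2). Your idea would, if completed, be somewhat more direct, but as you write yourself, the proof is not finished.

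The concrete gaps are the following. First, your last paragraph ends with ``L'obstacle principal sera de montrer...'': the decisive contradiction --- that H.2 is incompatible with the Zelevinsky structure of $\pi'$ --- is stated as a goal, not proved. The difficulty is real: Proposition \ref{segm}(2) is only a necessary condition for $\overline{\Jac}_{\nu^l}\neq 0$, so showing that some $\overline{\Jac}_{\nu^l}(\pi')\neq 0$ with $l>-\tfrac{m+1}{2}$ requires a sufficient criterion (in the paper's Case A this comes from explicitly exhibiting a singleton segment of $\rho'$ and then appealing, implicitly, to results like \cite[Th\'eor\`eme 6.6]{Min1}); your ``$b(\Delta'_1)=-\tfrac{m+1}{2}$'' is in fact only an inequality $b(\Delta'_1)\geq-\tfrac{m+1}{2}$ without further work. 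Second, the claim that H.1 forces the cuspidal support of $\pi$ to consist only of characters is plausible but needs an argument: H.1 controls which cuspidals can appear in the \emph{first} stage of a Jacquet filtration, not a priori every cuspidal in the support; one must argue (via the segment structure and a sufficient $\Jac$-criterion, not just \ref{segm}(1)) that a segment based on a cuspidal of $G_r$, $r>1$, would produce a forbidden $\Jac_\chi(\pi)\neq0$. Third, in the $n=1$ cuspidal case you cite Proposition \ref{segm}(2) to establish $\overline{\Jac}_{\nu^{(m-3)/2}}(\pi')\neq 0$; that proposition gives the wrong direction of implication. The conclusion is nevertheless true, but by a direct computation of $r_{m-1,1}^{G'_m}(\nu^{-1}1_m)=\nu^{-3/2}1_{m-1}\otimes\nu^{(m-3)/2}$, not by \ref{segm}(2). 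With these gaps, the sketch does not yet constitute a proof.
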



Soit $\pi \in \Irr(G_n)$ v\'erifiant H.1 et soit $a$ maximal tel que $$\pi \hookrightarrow \chi \times \chi \times \dots \times \chi \times \rho,$$ o\`u on a fait le produit de $a$ fois le caract\`ere $\chi=\nu ^{\frac{2m-n+1}{2}}$. On a alors, avec les notations de la section \ref{application}, que
\begin{enumerate}
\item[{\bf Cas A}] Ou bien, $\Hom\left( \tau_{a}, \chi \times \chi \times \dots \times \chi \otimes \rho\otimes \pi' \right)\neq 0,$
\item[{\bf Cas B}] ou bien, $\Hom\left( \tau_{a-1}, \chi \times \chi \times \dots \times \chi \otimes \rho\otimes \pi' \right)\neq 0.$
\end{enumerate}

C'est \`a dire:

\underline{\bf Cas A}  Puisque $m\neq n$ on est bien dans le cas A.1 et donc, d'apr\`es \ref{calculo1}, il existe $\rho' \in \Irr(G'_{m-a})$ avec $$\pi' \hookrightarrow \rho' \times \nu ^{\frac{m-n}{2}}\chi^{-1} \times \dots \times \nu ^{\frac{m-n}{2}}\chi^{-1} ,$$ o\`u on a fait le produit de $a$ fois le caract\`ere $\nu ^{\frac{m-n}{2}}\chi^{-1}$, $a$ maximal et $$\Hom \left( \sigma_{n-a,m-a}, \nu ^{\frac{-a}{2}} \rho \otimes\nu ^{\frac{a}{2}} \rho' \right) \neq 0.$$

Ceci n'est pas possible. En effet, par r\'ecurrence, on a que:
$$\pi' \hookrightarrow \rho' \times \nu ^{\frac{-m-1}{2}} \times \dots \times \nu ^{\frac{-m-1}{2}},$$ et $$\rho'=\left< \nu ^{\frac{m-2n-1}{2}}, \dots, \nu ^{\frac{-m+1}{2}} ,\nu^{\frac{m-n}{2}} \widetilde{\Delta_1}, \dots, \nu^{\frac{m-n}{2}} \widetilde{\Delta_N}\right>^t
$$ si $\rho=\left< \Delta_1, \dots, \Delta_N\right>^t$.
 
Il y a ainsi des segments de $\pi'$ commen\c{c}ant par $x_i > \frac{-m-1}{2}$ ce qui, par la proposition \ref{segm}(2), contredit H.2.

\underline{\bf Cas B}  Sinon montrons qu'il existe $\rho' \in \Irr(G'_{m-a+1})$ avec $$\pi' \hookrightarrow \rho' \times \nu ^{\frac{m-n}{2}}\chi^{-1} \times \dots \times \nu ^{\frac{m-n}{2}}\chi^{-1} ,$$ o\`u on a fait le produit de $a-1$ fois le caract\`ere $\nu ^{\frac{m-n}{2}}\chi^{-1}$, $a$ maximal et $$\Hom \left( \sigma_{n-a,m-a+1}, \nu ^{\frac{-a}{2}} \rho \otimes\nu ^{\frac{a+1}{2}} \rho' \right) \neq 0.$$

En effet, 
$\Hom\left( \tau_{a-1}, \chi \times \chi \times \dots \times \chi \otimes \rho\otimes \pi' \right)\neq 0 $ implique que
\begin{eqnarray*}
\Hom\Big( \ind_{P'_{a-1,m-a+1}}^{G'_m}&&\hspace{-0.8cm}\Big(\nu ^{\frac{m-n}{2}}\chi^{-1} \times \dots \times \nu ^{\frac{m-n}{2}}\chi^{-1} \otimes \\ &&\nu ^{\frac{a}{2}}\sigma_{n-a,m-a+1}\nu ^{\prime \frac{-a-1}{2}}\Big),
\rho \otimes \pi' \Big)\neq 0 .
\end{eqnarray*} 

Soit $b$ maintenant maximal tel qu'il existe une repr\'esentation irr\'eductible $\rho'$ de $G'_{m-b}$ avec $\pi'$ quotient de $$\nu ^{\frac{m-n}{2}}\chi^{-1} \times \dots \times \nu ^{\frac{m-n}{2}}\chi^{-1} \times \rho' $$ o\`u on a fait le produit de $b$ fois le caract\`ere $ \nu ^{\frac{m-n}{2}}\chi^{-1} $. D'apr\`es le corollaire \ref{cambio} on a une fl\`eche non nulle
$$\overline{r}_{b,m-b}^{G'_m}(\pi') \hookrightarrow \nu ^{\frac{m-n}{2}}\chi^{-1} \times \dots \times \nu ^{\frac{m-n}{2}}\chi^{-1} \otimes \rho'.$$
Puisque $\chi \neq \nu ^{\frac{n+1}{2}}$, on montre, comme dans \ref{calculo1}, que $b=a-1$ et donc
$$\Hom \left( \sigma_{n-a,m-a+1}, \nu ^{\frac{-a}{2}} \rho \otimes\nu ^{\frac{a+1}{2}} \rho' \right) \neq 0.$$

Ainsi, il existe $\rho' \in \Irr(G'_{m-a+1})$ avec $$\pi' \hookrightarrow \rho' \times \nu ^{\frac{m-n}{2}}\chi^{-1} \times \dots \times \nu ^{\frac{m-n}{2}}\chi^{-1} ,$$ o\`u on a fait le produit de $a-1$ fois le caract\`ere $\nu ^{\frac{m-n}{2}}\chi^{-1}$, $a$ maximal et $$\Hom \left( \sigma_{n-a,m-a+1}, \nu ^{\frac{-a}{2}} \rho \otimes\nu ^{\frac{a+1}{2}} \rho' \right) \neq 0.$$
Alors, par hypoth\`ese de r\'ecurrence on a $$\pi' \hookrightarrow \rho' \times \nu ^{\frac{-m-1}{2}} \times \dots \times \nu ^{\frac{-m-1}{2}},$$ et 
$$ \rho'=\left< \nu ^{\frac{m-2n-1}{2}}, \dots, \nu ^{\frac{-m-1}{2}} ,\nu^{\frac{m-n}{2}} \widetilde{\Delta_1}, \dots, \nu^{\frac{m-n}{2}} \widetilde{\Delta_N}\right>^t
$$ si $\rho=\left< \Delta_1, \dots, \Delta_N\right>^t$.
Si $n \neq m$, on trouve ainsi des segments de $\pi'$ commen\c{c}ant par $x_i > \frac{-m-1}{2}$ ce qui, \`a nouveau par \ref{segm}(2), contredit H.2.

\end{document}